\documentclass[11pt]{amsart}
\usepackage{amsfonts,epsfig,fancyhdr}
\usepackage{newlfont,amsfonts,amssymb,amsmath,amsthm,amsgen,amscd,datetime,dsfont,hyperref,tensor}
\usepackage[small,nohug,heads=littlevee]{diagrams}
\usepackage[normalem]{ulem}
\diagramstyle[labelstyle=\scriptstyle]
\usepackage{multicol,picinpar,enumerate,cleveref,mathabx}
\setlength{\footskip}{1cm}
\setlength{\headsep}{1cm}
\setlength{\textwidth}{15.3cm}
\setlength{\textheight}{21cm}
\setlength{\topmargin}{.0cm}
\setlength{\oddsidemargin}{8mm}
\setlength{\evensidemargin}{8mm}

\setlength{\marginparsep}{2mm}
\setlength{\marginparwidth}{3.2cm}

\hyphenation{ma-ni-fold}
\hyphenation{cor-res-pon-ding}


\usepackage{euscript,color}

\newcommand\re[1]{(\ref{#1})}

\newcommand{\ot}{\otimes}

\renewcommand{\L}{\mathbf{L}}
\renewcommand{\P}{\mathbf{P}}




\newcommand{\eeqref}[1]{equation~(\ref{#1})}

\newcommand{\C}{\ensuremath{\mathds{C}}}

\newcommand{\bmat}{\begin{pmatrix}}
\newcommand{\emat}{\end{pmatrix}}

\newcommand{\1}{\mathbf{1}}

\newcommand{\e}{\mathrm{e}}
\newcommand{\ev}{\mathbf{e}}
\renewcommand{\d}{{\mathrm d}}
\newcommand{\bcase}{\begin{case}}
\newcommand{\ecase}{\end{case}}

\newcommand{\bclaim}{\begin{claim}}
\newcommand{\eclaim}{\end{claim}}

\newcommand{\bstep}{\begin{step}}
\newcommand{\estep}{\end{step}}

\newcommand{\bhlem}{\begin{hlem}}
\newcommand{\ehlem}{\end{hlem}}

\newcommand{\bleer}{\begin{leer}}
\newcommand{\eleer}{\end{leer}}
\newcommand{\bde}{\begin{definition}}
\newcommand{\ede}{\end{definition}}

\newcommand{\bs}{\begin{proposition}}
\newcommand{\es}{\end{proposition}}
\newcommand{\btheo}{\begin{theorem}}
\newcommand{\etheo}{\end{theorem}}
\newcommand{\bfolg}{\begin{corollary}}
\newcommand{\efolg}{\end{corollary}}
\newcommand{\blem}{\begin{lemma}}
\newcommand{\elem}{\end{lemma}}
\newcommand{\bnote}{\begin{note}}
\newcommand{\enote}{\end{note}}
\newcommand{\bprf}{\begin{proof}}
\newcommand{\eprf}{\end{proof}}
\newcommand{\bd}{\begin{displaymath}}
\newcommand{\ed}{\end{displaymath}}
\newcommand{\be}{\begin{eqnarray*}}
\newcommand{\ee}{\end{eqnarray*}}
\newcommand{\eeqa}{\end{eqnarray}}
\newcommand{\beqa}{\begin{eqnarray}}
\newcommand{\bi}{\begin{itemize}}
\newcommand{\ei}{\end{itemize}}
\newcommand{\bnum}{\begin{enumerate}}
\newcommand{\enum}{\end{enumerate}}

\newcommand{\beq}{\begin{equation}}
\newcommand{\eeq}{\end{equation}}

\newcommand{\rr}{\mathds{R}}

\newcommand{\M}{M}

\newcommand{\vf}{\varphi}
\newcommand{\earr}{\end{array}\]}
\newcommand{\barr}{\[\begin{array}}
\newcommand{\bvec}{\left(\begin{array}{c}}
\newcommand{\evec}{\end{array}\right)}

\newcommand{\g}{\mathfrak{g}}
\renewcommand{\a}{\mathfrak{a}}
\renewcommand{\b}{\mathfrak{b}}

\newcommand{\h}{\mathfrak{h}}
\newcommand{\z}{\mathfrak{z}}

\renewcommand{\k}{\mathfrak{k}}
\renewcommand{\sl}{\mathfrak{sl}}
\newcommand{\hol}{\mathfrak{hol}}

\newcommand{\+}{\oplus}

\newcommand{\rrn}{\mathds{R}^n}
\newcommand{\so}{\mathfrak{so}}
\newcommand{\spin}{\mathfrak{spin}}

\newcommand{\gl}{\mathfrak{gl}}

\renewcommand{\sp}{\mathfrak{sp}}
\newcommand{\su}{\mathfrak{su}}
\newcommand{\SU}{\mathbf{SU}}

\newcommand{\SL}{\mathbf{SL}}

\newcommand{\del}{\partial}

\newcommand{\bbem}{\begin{bem}}
\newcommand{\ebem}{\end{bem}}
\newcommand{\bbez}{\begin{bez}}
\newcommand{\ebez}{\end{bez}}
\newcommand{\bbsp}{\begin{bsp}}
\newcommand{\ebsp}{\end{bsp}}
\newcommand{\pr}{\mathrm{pr}}




\newcommand{\im}{\mathrm{im}}

\newcommand{\trace}{\mathrm{tr }}
\DeclareMathOperator{\grad}{\mathrm{grad}}
\newcommand{\wt}{\widetilde}
\newcommand{\tnab}{\wt{\nabla}}

\newcommand{\tem}{\widetilde{M}}
\newcommand{\hm}{\widehat{\M}}
\newcommand{\hnab}{\widehat{\nabla}}
\newcommand{\hR}{\widehat{\RR}}

\newcommand{\tg}{\widetilde{g}}
\newcommand{\tgg}{\widetilde{\g}}

\DeclareMathOperator{\Span}{\mathrm{span}}



\renewcommand{\gg}{g}
\newcommand{\hg}{\widehat{\gg}}

\newcommand{\RR}{{R}}
\newcommand{\R}{R}
\newcommand{\hRic}{\widehat{Ric}}

\newcommand{\Id}{\mathrm{Id}}

\newcommand{\inter}{\makebox[7pt]{\rule{6pt}{.3pt}\rule{.3pt}{5pt}}\,}

\newcommand{\belabel}[1]{\begin{equation}\label{#1}}





\theoremstyle{definition}
\newtheorem{definition}{Definition}[section]
\newtheorem{bem}[definition]{Remark}
\newtheorem{bez}[definition]{Notation}
\newtheorem{bsp}[definition]{Example}

\newtheorem*{bsp*}{Example}
\newtheorem*{def*}{Definition}
\theoremstyle{plain}
\newtheorem{lemma}[definition]{Lemma}
\newtheorem*{lem*}{Lemma}
\newtheorem{proposition}[definition]{Proposition}
\newtheorem{corollary}[definition]{Corollary}
\newtheorem{theorem}[definition]{Theorem}

\numberwithin{equation}{section}

\setcounter{tocdepth}{1}

\begin{document}

\title
{Geometry and holonomy of indecomposable cones}

\thanks{This work was supported by 
the Australian Research
Council via the grants FT110100429 and DP120104582 and 
 by the German Science Foundation (DFG) under 
the Research Training Group 1670 and under Germany's Excellence Strategy -- EXC 2121 ``Quantum Universe'' -- 390833306. D.A.~is supported by grant n 18-00496 S of the Czech Science Foundation.
V.C.~is grateful to the University of Adelaide for its hospitality and support. V.C and T.L. 
thank  the mathematical research institute MATRIX in Australia where the first version of the paper was completed.
 }



\author{Dmitri Alekseevsky}\address{Institute for Information Transmission Problems, B.~Karetnuj per., 19, 127951, Moscow, Russia
     and University of Hradec Kr\'{a}lov\'{e}, Faculty of Science, Rokitansk\'{e}ho 62, 500 03 Hradec Kr\'{a}lov\'{e}, Czech Republic}
\email{dalekseevsky@iitp.ru}
\author{Vicente Cort\'{e}s}
\address{Department
   Mathematik, Universit\"at Hamburg, Bundesstra{\ss}e 55, D-20146
   Hamburg, Germany}\email{vicente.cortes@uni-hamburg.de}
\author{Thomas Leistner}\address{School of Mathematical Sciences, University of Adelaide, SA 5005, Australia}\email{thomas.leistner@adelaide.edu.au}

\subjclass[2010]{Primary 
53C50; Secondary 53C29, 53B30}
\keywords{Lorentzian manifolds, pseudo-Riemannian manifolds, metric cones, special holonomy}
\begin{abstract}
We study the geometry and  holonomy of  semi-Riemannian, time-like 
metric cones that are indecomposable, i.e., which do not admit a local decomposition into a semi-Riemannian product.   
This includes irreducible cones, for which the holonomy can be classified, as well as non irreducible cones. The latter admit a parallel distribution of null $k$-planes, and we study the cases $k=1$ and $k=2$ in detail. In these cases, i.e., when the cone admits a distribution of parallel null tangent lines or planes, we give structure theorems about the base manifold. Moreover, in the case $k=1$ and when the base manifold is Lorentzian, we derive a description of the cone holonomy. This result is obtained by a computation of certain cocycles of indecomposable subalgebras in $\so(1,n-1)$.
 \end{abstract}

\maketitle
\tableofcontents
\section{Introduction}
\subsection{Background}
Cone constructions are a valuable tool in differential geometry to study overdetermined PDEs on manifolds. 
They are applied  in conformal \cite{fefferman/graham85,fefferman-graham07} and projective geometry \cite{Thomas25,Armstrong08-2}, but 
the most striking example is B\"{a}r's classification of Riemannian manifolds with real Killing spinors \cite{baer93}. B\"{a}r's observation that real Killing spinors on a Riemannian manifold $(M,g)$ correspond to parallel spinors on the cone
\[(\widecheck{M}=\rr^{>0}\times M, \widecheck{g}=dr^2+r^2 g),\]
allows to relate and apply several  fundamental results  in differential geometry: Berger's  list of irreducible Riemannian holonomy groups \cite{berger55} and the classification of those that belong to manifolds with parallel spinors by Wang \cite{wang89}, the understanding of the geometric structures that correspond to these holonomy groups, and finally Gallot's Theorem 
\cite{gallot79} 
that the cone $(\widecheck{M}, \widecheck{g})$ over a complete manifold $(M,g)$ is either flat or irreducible. This result allows to determine the geometry of $(M,g)$: if the cone $(\widecheck{M}, \widecheck{g})$ is flat, then $(M,g)$ has constant sectional curvature $1$, and if the cone is irreducible, the geometry of $(M,g)$ is determined by the special holonomy of the cone  (Ricci-flat K\"ahler, hyper-K\"ahler, or exceptional).

One of the motivations to study semi-Riemannian cones  is the Killing spinor equation on semi-Riemannian manifolds, but  indefinite cones already  become relevant in the Riemannian context. Indeed, {\em imaginary}  Killing spinors on a Riemannian manifold $(M,g)$ correspond to parallel spinors on the time-like cone
\begin{equation}\label{cone}(\hm=\rr^{>0}\times M, \hg=- \d r^2+r^2 g).\end{equation}
Riemannian manifolds with imaginary Killing spinors were classified by Baum in \cite{baum89-2, baum89-3} without using the cone construction, but our results about Lorentzian cones in \cite{acgl07} allow to reprove Baum's classification.

 Another motivation stems from supergravity (and string theory), where semi-Riemannian cones play a two-fold role. One the one hand, 
they appear as scalar geometries (of arbitrary dimension) in the superconformal formulation of supergravity theories, on the other hand, 
they can be used to study space-times which are part of supersymmetric solutions of the equations of motion of theories of (Poincar\'e) supergravity or of string theories.  
In the latter case, the supersymmetry equations can be analysed by passing to the time-like cone over the Lorentzian space-time manifold, which is a semi-Riemannian cone of index $2$.

A  generalisation of B\"ar's method to {\em indefinite} semi-Riemannian manifolds has two aspects: a holonomy classification of indefinite semi-Riemannian cones and the description of the corresponding  geometry of the base. Both tasks face  several difficulties in the  semi-Riemannian context.
The fundamental difficulty is  
that for metrics of arbitrary signature the holonomy group may not act completely reducibly: there are semi-Riemannian manifolds whose holonomy group admits an invariant subspace that is degenerate for the metric. As a consequence, those manifolds cannot be decomposed into a product of manifolds  with irreducible holonomy, as it is the case for Riemannian  manifolds.  Hence, 
in an indefinite semi-Riemannian context, irreducibility has to be replaced by indecomposability. A semi-Riemannian manifold is {\em indecomposable} if its holonomy representation (i.e., the representation of the holonomy algebra on the tangent space) does not admit an invariant subspace that is non-degenerate for the metric. By the splitting theorems of de Rham \cite{derham52} and Wu \cite{wu64}, such metrics do not have a local decomposition into product metrics, hence the term {\em indecomposable}. 
Therefore, a generalisation of B\"{a}r's method to semi-Riemannian geometry requires two steps:
\bi
\item[(A)] Generalise Gallot's Theorem  to the case of semi-Riemannian cones.
\item [(B)]
For indecomposable semi-Riemannian cones, 
describe the holonomy of the cone and the local geometry of the base. 
\ei
The problem in (A) was solved in \cite{acgl07}, where we studied decomposable indefinite semi-Riemannian cones and obtained a generalisation of Gallot's result.  In fact we showed that a cone over a complete and compact semi-Riemannian manifold is either flat or indecomposable.  The results in \cite{acgl07} have been  generalised in the compact and in the complete case in   \cite{matveev-mounoud10,matveev10,Mounoud12}.
Further results about decomposable cones have been obtained in 
 \cite[Theorems 5 and 6]{FedorovaMatveev14}. Cones  over Lorentzian Sasaki manifolds and their holonomy were studied in the decomposable and indecomposable case in    \cite{galaev06thesis}.


\subsection{Results}
In this article we deal with problem (B), i.e., we study the local geometry of the base and the holonomy of the cone in the case when the cone is {\em indecomposable}. This setting naturally splits into two different scenarios: the holonomy of the cone is {\em irreducible}, or it admits an {\em invariant subspace that is totally null but no non-degenerate invariant subspace}. The irreducible case is well understood as there is Berger's classification of irreducible holonomy groups \cite{berger55}, which we describe in Section \ref{irrsec} with the following result:
\btheo
\label{irredtheointro}
If $(\hm,\hg)$ is a time-like cone with irreducible holonomy algebra $\mathfrak{g}$, then $\g$ is isomorphic to one of the following Lie algebras
\begin{equation}\label{berger0}
\begin{array}{rclrclrcl}
&&\so(t,s),
&
\mathfrak{u}(p,q),\ \su(p,q) &\subset &  \so(2p,2q),
&
 \sp(p,q) &\subset & \so(4p,4q),
 \\
\so(n,\C)&\subset & \so(n,n),
& 
\g_2^\C&\subset& \so(7,7),
& \spin(7,\C)&\subset&\so(8,8),
 \\
 &&&
\g_2&\subset &\so(7), 
& 
\spin(7) &\subset & \so(8),
\\
&&&
\g_{2(2)} &\subset & \so(3,4),
&
\spin(3,4) &\subset &  \so(4,4).
\end{array}\end{equation}

\etheo

More interesting is the 
 non-irreducible indecomposable case. Here the cone admits a totally null vector distribution of rank $k>0$ that is invariant under parallel transport, or equivalently, its space of sections is invariant under differentiation with respect to the Levi-Civita connection. 
In general, this case is rather difficult and no general holonomy classification is known. However, 
the  parallel vector distribution determines the local structure of the base. This became obvious in   \cite{acgl07} where we studied the case of  \emph{Lorentzian} indecomposable cones. 
As mentioned, some of our motivation  comes from 
the equations of motion of supersymmetric theories of gravity, where the space-time metric is Lorentzian (that is of index $1$).  Hence we will focus on cones that have index $2$, that is signature $(2,n-2)$. For these the totally null parallel vector distribution is of rank $1$, i.e., a {\em null line}, or of rank  $2$, i.e., a {\em null plane}. Many of our results will however hold for cones in {\em arbitrary signature} but with an invariant  null line or  null plane.

 In Section \ref{localsec} we will study the case of a {\em parallel null line}, and  describe the local structure of the base as well as of the cone:

 \btheo\label{localtheointro}
 Let $(\hm,\hg)$ be the time-like cone  over a semi-Riemannian manifold $(M,g)$. 
 If  the cone admits a parallel null line field $\L$, then locally there is a parallel trivializing section of $\mathbf{L}$. Moreover, on a dense open subset $\hm_{\mathrm{reg}} \subset \hm$, the metric 
 $\hg$ is   locally  isometric to a warped product of the form 
\begin{equation}\label{ncone}\tg_0=2\, \d u\, \d v+u^2\gg_0,\end{equation}
with a semi-Riemannian metric $\gg_0$, and the metric $\gg$ is locally of the form
\[\gg=\d s^2+\mathrm{e}^{2s}\gg_0.\]
\etheo
In the case when the above decompositions hold globally (see Theorem \ref{glob:thm}), the situation can be summarised in the commutative diagram:
\begin{equation}\label{diagram}
\begin{diagram}
\text{signature }(t+1,s)\qquad&&&(\hm,\hg)&\rTo^{\text{isometry }\psi \quad}& (\widetilde{M},\tg_0)
\\
&& \ruTo_{\text{cone}}^{\hg=-\d r^2+r^2 \gg}&&\ruTo(2,4)^{\text{double warp}}_{\tg_0=2 \d u\d v+u^2\gg_0}&
\\
\text{signature }(t,s)\qquad&(M,g)&&&&
\\
&&\luTo^{\text{warp}}_{\gg=\d s^2+\mathrm{e}^{2s}\gg_0}&& & \\
\text{signature }(t,s-1)\qquad &&& (M_0,g_0)&&
\end{diagram}\end{equation} Here $\widetilde{M} = \mathbb{R}^+ \times \mathbb{R}^- \times M_0$, see (\ref{psi:eq}) for the definition of $\psi$. 
This result motivates the study of metrics of the form \re{ncone} in  \Cref{uvgsec}.  Such metrics have a parallel null vector field $\partial_v$ and it was shown in \cite{leistner05a} that their holonomy algebra  $\wt\g=\hol(\tg_0)$ is contained in $\hol (g_0)\ltimes \rr^{t,s}$, where $(t,s)$ is the signature of  the metric $g_0$, and moreover that $\pr_{\so(n)}(\wt\g)=\hol (g_0)$. For a Lorentzian metric $\tg$, i.e., when $g_0$ is Riemannian, it was shown in \cite{leistner05a,acgl07} that we have in fact \[\wt\g=\hol(g_0)\ltimes \rrn,\]
which means that the holonomy of the cone is determined solely by the holonomy of the metric $g_0$. In higher signatures, i.e., when $g_0$ is not Riemannian, this is no longer true, as examples will show. Our approach is to consider  the ideal of translations in $\hol(\tg_0)$,
\[T:=\hol(\tg_0)\cap \rr^{t,s},\]
and use this for a first, purely algebraic study of indecomposable subalgebras in the stabiliser of a null vector. This will be carried out in Section \ref{alg1sec}, which is the most technical section of the paper. The key observation is that 
\[\widetilde \g/T =\{ (X, \vf(X))\mid X\in \hol(g_0) \},\quad\text{ with }\vf\in Z^1(\hol(g_0), \rr^{t,s}/T),\]
where $Z^1(\hol(g_0), \rr^{t,s}/T)$ denotes the cocycles of $\hol(g_0)$ with values in $\rr^{t,s}/T$.
For example, in order to obtain results for time-like cones over Lorentzian manifolds, we will compute $ Z^1(\g, \rr^{1,n-1}/T)$, for   indecomposable subalgebras $\g$ of $\so(1,n-1)$ (these belong to one of four types according to \cite{bb-ike93}).

In Section \ref{holsec} we apply these algebraic results to obtain the following result. 
\btheo\label{holtheointro}
Let $\gg_0$ be a Lorentzian metric on an $n$-dimensional simply connected manifold $M$ and   
$\tg_0$ the metric of signature $(2,n)$  on $\rr^+\times \rr\times M$ defined in \re{ncone}.
  If the holonomy of $\tg_0$ acts indecomposably and with invariant null line, then 
\[\hol(\tg_0)=\hol(\gg_0)\ltimes \rr^{1,n-1},\]
or $g_0$ admits a parallel null vector field and $\tg_0$ admits two linearly independent parallel null vector fields that are orthogonal to each other.
\etheo
 This theorem shows that if the holonomy of $\tg_0$ is not equal to the semi-direct product $\hol(g_0)\ltimes \rr^{1,n-1}$, then $\tg_0$ and hence the cone admits a parallel null plane (which in addition is spanned by two parallel null vector fields). We study the case of cones admitting a totally null parallel $2$-plane in the remainder of the article. In Section \ref{planesec} we show:
 \btheo\label{planetheorintro}
If the timelike cone $(\hm,\hg)$ over a semi-Riemannian manifold $(M,g)$ admits a parallel, totally  null $2$-plane field, then, locally  over an open dense subset
the base $(\M,\gg)$  admits two vector fields $V$ and $Z$ satisfying
\begin{equation}\label{gVZint}
\gg(V,V)=0,\ \ \gg(Z,Z)=1,\ \ \gg(V,Z)=0,\end{equation}
and such that
 \begin{eqnarray}
  \label{nabVint}
  \nabla_XV
  &
  =
  &\alpha (X)V + \gg(X,V)Z,
  \\
  \label{nabZint}
    \nabla_XZ
  &=&
  -X +\beta(X)V+\gg(X,Z)Z,
  \end{eqnarray}
 with $1$-forms $\alpha $ and $\beta$ on $\M$.
 In particular,  the base $(\M,\gg)$ admits a geodesic, shearfree null congruence defined by $V$.

 Conversely, each pair of vector fields $V$ and $Z$ on $M$ satisfying relations (\ref{gVZint}), (\ref{nabVint}) and (\ref{nabZint}) defines a parallel distribution of totally null $2$-planes on the cone.

 \etheo
Note that equation \re{nabVint} implies that $V^\perp$ is integrable. This allows us to  determine the local form of the metrics with vector field $V$ and $Z$ satisfying equations (\ref{gVZint}--\ref{nabZint}):

\btheo 
A semi-Riemannian metric $(M,g)$ admits vector fields $V$ and $Z$ with (\ref{gVZint}--\ref{nabZint}) if and only if $(M,g)$ is locally of the form 
$M = M_0 \times \rr^3$ and 
\[ g = \d s^2 + e^{-2s}g_0(u) + 2\, \d u\,\eta,\]
for a family of metrics $g_0(u)$ on $M_0$ depending on $u$ and 
a $1$-form $\eta$ on $M$ such that $\eta (\partial_t)$ is nowhere vanishing 
satisfying the following system of first order PDEs: 
 \begin{equation}\begin{array}{rcl}
  \partial_t\eta_t=\partial_s\eta_t\ =\ X\eta_t&=&0,
  \\
  \partial_t\eta_s&  =&  2\eta_t,
  \\
  \partial_t(\eta(X))&=&0,\\
  \partial_s\,\eta (X) -X\ \eta_s&=&-2\eta (X)
   \label{system:equ-int}\end{array}\end{equation}
 for all $X\in \mathfrak{X}(M_0)$ and where $\eta_t:=\eta(\partial_t)$ and $\eta_s:=\eta(\partial_s)$.
\etheo
Finally we give explicitly the general solution for the system \re{system:equ-int}, providing us with a construction method of metrics whose cone admits a totally null two plane.

\subsection*{Acknowledgements} We would like to thank the anonymous referee for many valuable comments, in particular about the proof of Theorem \ref{z1lemma}, and for pointing out to us the result of Corollary \ref{refcor}.

\section{Preliminaries}
\subsection{Fundamental properties of time-like cones}
Let $(\M,\gg)$ be a semi-Riemannnian manifold and $\hm:=\rr^+\times \M$  with the metric
\belabel
{conemetric}
\hg:= -  \d r^2 +r^2\gg
\eeq
be the {\em time-like cone} or just the {\em cone over $(\M,\gg)$}. We denote by 
\[\xi=r\frac{\del}{\del r}
\] the {\em Euler vector field}.
The Levi-Civita connection $\hnab$ of $\hg$ reduces to the Levi-Civita connection $\nabla$ of $\gg$ in the following way 
\belabel{coneLC}
\begin{array}{rcl}
\hnab \xi  &=& \Id,\\
\hnab_X Y &=& \nabla_X Y +\gg(X,Y)\xi, 
\end{array}
\eeq
where here and in the following formulas $X,Y,Z\in \mathfrak{X}(M)$, 
and the curvature is given as
\belabel{coneR}
\begin{array}{rcl}
\xi \inter \hR&=&0,\\
\hR (X,Y)Z&=& \RR (X,Y)Z +  \gg(Y,Z)X - \gg(X,Z)Y .
\end{array}
\eeq
Hence, for the Ricci tensor we obtain that
\belabel{coneRic}
\begin{array}{rcl}
\xi \inter \hRic&=&0,\\[1mm]
\hRic (X,Y)&=& Ric (X,Y) +(n-1)  \gg(X,Y).
\end{array}
\eeq
This leads to the following observations:
\bs\label{coneprop}
Let $(\hm,\hg)$ be the cone over $(M,g)$.
\begin{enumerate}
\item $(M,g)$ has constant curvature $-1$   if and only if the cone $(\hm,
\hg)$ is flat.
 \item If $(\hm,\hg)$ is Einstein, then it is Ricci-flat.
\item If $(M,g)$ is Einstein with $Ric=(1-n)g$, then $(\hm,\hg)$ is Ricci-flat.
\end{enumerate}
\es

Finally we recall the important known fact that the existence of a time-like vector field $\xi$ with $\nabla\xi=\Id$ characterises cones locally, see for example \cite{GibbonsRychenkova98} or \cite[Lemma 1]{FedorovaMatveev14}. We include the proof here for expository reasons. 

\bs\label{locconeprop}
Let $(\hm,\hg)$ be a semi-Riemannian manifold of dimension $n+1$ that admits a time-like vector field $\xi$ such that $\widehat\nabla\xi=\Id$. Then there are local coordinates $(r,x^1, \ldots x^n)$ such that $\hg$ is of the form 
\[\hg=- \d r^2+r^2g_{ij}(x^1, \ldots , x^n) \d x^i\d x^j,
\]
where $i,j$ run from $1$ to $n$, we use the Einstein summation convention, and $g_{ij}=g_{ij}(x^1, \ldots, x^n)$ are functions of the $x^k$ coordinates only.
\es
\bprf
The vector field $\xi$ defines a positive function $r$ via
\[\hg(\xi,\xi)=-r^2.\]
Differentiating this relation gives
\[
2r\d r =\d(r^2)=-\d(\hg(\xi,\xi))=-2 g(\xi,\cdot )=-2\xi^\flat,\]
where the musical isomorphism $\flat$ denotes the metric dual with respect to $\hg$. Hence
\[\xi^\flat=-\d \left(\frac{r^2}{2}\right),
\]
is exact and therefore $\xi=-\widehat\nabla  \frac{r^2}{2}$ is a gradient vector field. The level sets of the function $r$ are orthogonal to $\xi$ and we can fix coordinates $( x^1, \ldots , x^n)$ on the level sets such that $(r, x^1, \ldots , x^n)$ are local coordinates on $\hm$. In these coordinates the metric has the form 
\[g=- \d r^2+\hg_{ij}(r, x^1, \ldots , x^n) \d x^i\d x^j,\]
and it holds $\xi=r\partial_r$.  Since $\hat\nabla \xi=\Id$, the vector field $\xi$ is a homothety,
\[\cal L_\xi \hg=2\hg,\] which implies that 
\[\hg_{ij}(r, x^1, \ldots , x^n)=r^2g_{ij}(x^1, \ldots , x^n) 
\]
for some functions $g_{ij}(x^1, \ldots , x^n)$ of the $x^i$ coordinates.
\eprf

\subsection{The holonomy of irreducible cones}
\label{irrsec}
For irreducible cones the possible holonomy groups are known from the Berger list \cite{berger55}, which comprises the orthogonal algebra and the three lists (\ref{berger1}--\ref{berger3}) below.
In the following let  $\h\subset \so(t+1,q)$  the irreducible holonomy algebra of a semi-Riemannian manifold $(\hm,\hg)$, i.e., one of the entries in Berger's list. For each possible $\h$ we will now determine if it can be the holonomy algebra of a cone.

\begin{enumerate}
\item $\h=\so(t+1,s)$: This is the holonomy algebra of a generic semi-Riemannian manifold of signature $(t+1,s)$.  
\begin{proposition}
Let  $(M,g)$ be a semi-Riemannian manifold of signature $(t,s)$ and of constant  curvature $\kappa \neq -1$ and let $(\widehat M,\hg)$ be the time-like cone
over $(M,g)$. Then $\hol(\widehat M,\hg)=\so(t+1,s)$.
\end{proposition}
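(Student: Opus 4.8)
The plan is to establish the non-trivial inclusion $\hol(\hm,\hg)\supseteq\so(t+1,s)$ — the reverse one holds because $\hnab$ is metric — by exhibiting, at a fixed point $p\in\hm$, enough curvature operators $\hR_p$ and their first covariant derivatives $(\hnab\hR)_p$; recall that the infinitesimal holonomy algebra, generated by all $\hnab^k\hR$ at $p$, is always contained in $\hol_p$. First I would substitute the constant-curvature identity $\RR(X,Y)Z=\kappa\bigl(\gg(Y,Z)X-\gg(X,Z)Y\bigr)$ into the reduction formula \re{coneR}; together with $\xi\inter\hR=0$ this gives
\[\hR(X,Y)Z=(\kappa+1)\bigl(\gg(Y,Z)X-\gg(X,Z)Y\bigr)\qquad(X,Y,Z\perp\xi),\]
and $\hR$ vanishes whenever one of its arguments is $\xi$. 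Since $\kappa\neq-1$, the operators $\hR_p(X,Y)$ with $X,Y\in\xi_p^\perp$ are, up to a nonzero scalar, precisely the decomposable skew-endomorphisms $Z\mapsto\hg(Y,Z)X-\hg(X,Z)Y$ of $T_p\hm$ that kill $\xi_p$; as decomposable bivectors span, $\hol_p$ already contains the full subalgebra $\so(\xi_p^\perp)\cong\so(t,s)$, i.e.\ the stabiliser in $\so(t+1,s)$ of the timelike line $\rr\xi_p$.

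The next step is to differentiate the displayed identity, using the reduction rule \re{coneLC} and $\hnab_W\xi=W$. A direct computation — in which the $\nabla$-terms and all the terms proportional to horizontal vectors cancel — yields, for $W,X,Y,Z\perp\xi$,
\[(\hnab_W\hR)(X,Y)Z=(\kappa+1)\bigl(\gg(W,X)\gg(Y,Z)-\gg(W,Y)\gg(X,Z)\bigr)\xi,\qquad(\hnab_W\hR)(X,Y)\xi=(\kappa+1)\bigl(\gg(W,X)Y-\gg(W,Y)X\bigr),\]
and one checks that this operator is $(\kappa+1)$ times the $\hg$-skew endomorphism $w\wedge\xi_p$, where $w\in\xi_p^\perp$ is a multiple of $\gg(W,X)Y-\gg(W,Y)X$. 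By choosing $W,X,Y\in\xi_p^\perp$ suitably this $w$ runs through all of $\xi_p^\perp$ (for instance $W=X$ non-null and $\gg$-orthogonal to $u$, $Y=u$, when $\gg$ is definite; $W=X=u$ with $\gg(u,Y)\neq0$ when $u$ is null; and similarly in the remaining cases), so $\hol_p$ also contains all ``transvections'' $w\wedge\xi_p$, $w\in\xi_p^\perp$. Since $\so(t+1,s)\cong\Lambda^2 T_p\hm=\Lambda^2\xi_p^\perp\oplus(\xi_p^\perp\wedge\xi_p)$ and the two steps produce the two summands, this forces $\hol_p=\so(t+1,s)$. Alternatively, once $\hol_p\supseteq\so(t,s)=\stab(\xi_p)$ and a single non-zero transvection shows $\hol_p\neq\so(t,s)$, one may conclude from the maximality of $\so(t,s)$ in $\so(t+1,s)$, valid for $t+s\geq3$.

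The only step demanding real care is the computation of $\hnab\hR$; the rest is linear algebra in $\Lambda^2 T_p\hm$. The minor obstacles I anticipate are organising that computation cleanly and checking that the transvections really do exhaust $\xi_p^\perp$ in the small split signatures, where $\so(t,s)$ need not be maximal in $\so(t+1,s)$ and one must argue directly with $\Lambda^2$. It is worth recording that the hypothesis $\kappa\neq-1$ enters exactly through $\kappa+1\neq0$, i.e.\ through the non-flatness of the cone (Proposition~\ref{coneprop}(1)).
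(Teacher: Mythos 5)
Your proposal is correct and follows essentially the same route as the paper: substitute the constant-curvature identity into the cone curvature formula to see that the operators $\hR(X,Y)$ span the stabiliser $\so(t,s)$ of $\xi$, then use the first covariant derivative of $\hR$ (which sends $\xi$ to $-\hR(X,Y)W$, a nonzero multiple of a transvection $w\wedge\xi$) to fill in the complementary summand $\xi_p^\perp\wedge\xi_p$ of $\so(t+1,s)$. Your write-up is in fact more careful than the paper's (which states the derivative formula tersely and with what appears to be a normalisation slip), but no new idea is involved.
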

\begin{proof}
The curvature endomorphisms of $(M,g)$ are of the form 
\[\RR (X,Y) =\kappa \big(  \gg(Y,\cdot )X - \gg(X,\cdot )Y \big).\]
Since the holonomy algebra contains all curvature endomorphisms, \eeqref{coneR} shows that
\[\so(t,s)\subset \hol(\hm,\hg),\]
where $\so(t,s)$ is embedded as the stabiliser of the vector $\xi$. Moreover, equations (\ref{coneLC}--\ref{coneR}) show that
\[
(\hnab_X\hR)(Y,Z)\xi=
-\hR (X,Y)Z 
=
-2\left(  \gg(Y,Z)X - \gg(X,Z)Y \right).
\]
This establishes $\hol(\widehat M,\hg)=\so(t+1,s)$.
\end{proof}

\item $\h$ is the holonomy of an irreducible symmetric space or  one of the following algebras: 
\begin{equation}\label{berger1}
\begin{array}{rcl}
\sp(1)\oplus\sp(p,q)&\subset& \so(2p,2q),\\
\sl(2,\rr) \oplus \sp(m,\rr)&\subset& \so(2m,2m),\\
\sl(2,\C)\oplus \sp(m,\C)&\subset& \so(4m,4m),
\end{array}\end{equation}
 where $p+q$ and $m$ are $>1$. In the first case the metric is quaternionic K\"ahler of signature $(4p,4q)$ and 
in the second it is quaternionic para-K\"ahler. 
Examples of the third type are obtained by complexifying manifolds with holonomy of the first two types, as discussed below.
In these examples $(\hm,\hg)$  is Einstein with {\em nonzero} Einstein constant, see \cite[Theorem 3]{AlekseevskyCortes05}.  
Hence, these  cases can be excluded  as holonomy of cones by 
\Cref{coneprop}.
\item $\h$ is one of the following:
\begin{equation}\label{berger2}
\begin{array}{rclrcl}
\mathfrak{u}(p,q),\ \su(p,q) &\subset & \so(2p,2q),&
 \sp(p,q)&\subset &\so(4p,4q),\\
\g_2&\subset &\so(7), &
\spin(7)&\subset & \so(8),\\
\g_{2(2)}& \subset & \so(3,4),&
\spin(3,4) &\subset&  \so(4,4)
.\end{array}\end{equation}
The geometric structures corresponding to these algebras do exist on cones over semi-Riemannian manifolds with certain structures. 
In fact, 
the following relations between structure on the base $(M,g)$ and on the cone are well known (see for example \cite{baer93} for the Riemannian case and \cite{kathhabil} for the indefinite cases, and references therein):
\begin{itemize}
\item[(i)]
The cone over a (semi-Riemannian) Sasaki, Einstein-Sasaki or  $3$-Sasaki manifold is, respectively, a K\"{a}hler, Ricci-flat 
K\"ahler or hyper-K\"{a}hler manifold  and hence has holonomy contained in $\mathfrak{u}(p,q)$, $\su(p,q)$ or $\mathfrak{sp}(p,q)$. 
\item[(ii)] The cone over a strict nearly-K\"ahler manifold of dimension $6$, 
Riemannian or of signature $(2,4)$, has a parallel 
$\mathbf{G}_2$- or $\mathbf{G}_{2(2)}$-structure and hence has holonomy contained in $\g_2$ or $\g_{2(2)}$.  
Similarly, the cone over a nearly para-K\"ahler manifold with $|\nabla J|^2\neq 0$ has holonomy contained in $\g_{2(2)}$, see \cite
[Prop.~3.1]{clss-09}.  
\item[(iii)]
The cone over a $7$-manifold with a nearly-parallel $\mathbf{G}_2$-structure, Riemannian or of signature $(3,4)$, has a parallel $\mathbf{Spin}(7)$- or $\mathbf{Spin}(3,4)$-structure and hence has holonomy contained in $\spin(7)$ or $\spin(3,4)$.  
\end{itemize}
The question remains, whether the holonomy of the cone is not only contained but actually {\em equal} to one of the algebras in the list (\ref{berger2}). In the Riemannian setting (which corresponds to the case where the base of the time-like cone is negative definite) this can be established by using  Gallot's Theorem  that the (space-like) cone over a {\em complete} Riemannian manifold $(M,g)$  is either flat or irreducible and then by constructing a complete $(M,g)$ with the corresponding structure. 
For indefinite metrics several gaps open up in this argument: our generalisation of Gallot's Theorem in \cite{acgl07} assumes that $(M,g)$ to be compact and complete and implies that the cones is flat or {\em indecomposable}, but not necessarily irreducible. 
Hence, even if one constructed compact and complete indefinite semi-Riemannian manifolds with the above structures, the cone would not have to be irreducible and hence its holonomy could be an indecomposable, non irreducible subalgebra of the algebras in (\ref{berger2}).
We suspect however, that for a ``generic'' semi-Riemannian manifold with one of the above structures, the cone has holonomy equal to the algebras in (\ref{berger2}).  
An explicit way of constructing examples of cones with special holonomy is given below in Remark \ref{realformremark}.

\item
$\h$ is one of the following algebras:
\begin{equation}\label{berger3}
\begin{array}{rclrcl}
\so(n,\C)&\subset&\so(n,n)& \sl (2,\C)\oplus \sp(m,\C)&\subset& \so(4m,4m)
\\
\g_2^\C&\subset& \so(7,7),&\spin(7,\C)&\subset&\so(8,8).
\end{array}
\end{equation}
Examples can be obtained by complexification as we will explain now in detail. In the case of  $\sl (2,\C)\oplus \sp(m,\C)$ the metric is then Einstein of nonzero scalar curvature (incompatible with a cone), whereas in the  two exceptional cases
it is Ricci-flat.
\end{enumerate}
\subsubsection*{Realisation of complex holonomy algebras}
Let $(M,g)$ be a connected real analytic manifold endowed with a real analytic semi-Riemannian metric. Then it is easy to see that 
$M$ can be embedded into a connected complex manifold $M^\mathbb{C}$ with the following properties. 
\begin{enumerate}
\item 
There exists an
atlas of $M^\mathbb{C}$ such that each of its charts $\varphi : U \rightarrow \mathbb{C}^n$ is real-valued on 
$U\cap M$ and the restrictions $\varphi|_{U\cap M} : U \cap M \rightarrow \mathbb{R}^n$, $U\cap M\neq \emptyset$, form an atlas of $M$. 
\item The metric coefficients $g_{ij}(x)$ with respect to the real coordinates $x=(x^1,\ldots , x^n)= \varphi|_{U\cap M}$ 
are given by real power series converging in $U\cap M$. 
\item The power series $g_{ij}(z)$ in the holomorphic coordinates
$z=(z^1,\ldots , z^n)=\varphi$ converges in $U$  for all $i,j$. 
\end{enumerate}
It follows that we can define a holomorphic symmetric tensor field $g^\mathbb{C}$ on $M^\mathbb{C}$  by 
\[ g^\mathbb{C}|_U = \sum g_{ij}(z)\d z^i\d z^j.\]
The tensor field is non-degenerate on a neighborhood of $M$ and by restriction we can always assume that it is non-degenerate on 
$M^\mathbb{C}$. Then it defines what is called a \emph{holomorphic Riemannian metric} on $M^\mathbb{C}$. 
We will call $(M^\mathbb{C}, g^\mathbb{C})$ a \emph{complexification} of $(M,g)$. 
Recall that a pair consisting of a complex manifold and a holomorphic Riemannian metric on that manifold is 
called a \emph{holomorphic Riemannian manifold}. Note that $(M^\mathbb{C}, g^\mathbb{C})$ is unique as a germ of 
holomorphic Riemannian manifold along $M$. 

We define the \emph{holonomy algebra} of a holomorphic Riemannian manifold $(M^\mathbb{C}, g^\mathbb{C})$ at $p\in M^\mathbb{C}$ 
as the Lie algebra spanned by all the skew-symmetric endomorphisms 
\[ ((\nabla^\mathbb{C})^k_{v_1,\ldots ,v_k} R^\mathbb{C})(v_{k+1}, v_{k+2}) \in \mathfrak{so}(T^{1,0}_pM^\mathbb{C})\cong \mathfrak{so}(T_pM)^\mathbb{C},\] 
where $v_1, 
\ldots , v_{k+2}\in T^{1,0}_pM^\mathbb{C}$ 
and $k\ge 0$. Here $\nabla^{\mathbb{C}}$ denotes the (holomorphic) Levi-Civita connection of $g^\mathbb{C}$ and $R^\mathbb{C}$ its curvature tensor. 

\begin{proposition}\label{prop24} Let $(M^\mathbb{C}, g^\mathbb{C})$  be a complexification of a connected semi-Riemannian
manifold $(M,g)$.  Then the holonomy algebra of $(M^\mathbb{C}, g^\mathbb{C})$
is given by the complexification $\mathfrak{h}^\mathbb{C}$ of the holonomy algebra $\mathfrak{h}$ of $(M,g)$. 
\end{proposition}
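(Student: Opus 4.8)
The plan is to reduce the statement to the analogous (and classical) fact about holomorphic curvature tensors by working in the germ of holomorphic Riemannian manifold along $M$. First I would fix a base point $p\in M\subset M^{\mathbb C}$ and compare the holonomy algebra $\mathfrak h$ of $(M,g)$ at $p$ with the holonomy algebra $\mathfrak h^{\mathbb C}_{\mathrm{hol}}$ of $(M^{\mathbb C},g^{\mathbb C})$ at $p$, both defined—via the Ambrose--Singer description already used in the excerpt—as the spans of iterated covariant derivatives of the curvature tensor evaluated on tangent vectors. The key observation is that in a holomorphic chart $\varphi$ adapted to $M$ as in (1)--(3), the Christoffel symbols of $g^{\mathbb C}$ are holomorphic functions of $z$ whose restriction to $U\cap M$ equals the (real-analytic) Christoffel symbols of $g$ in the real coordinates $x=\varphi|_{U\cap M}$; hence the same holds for the components of $R^{\mathbb C}$ and all of its holomorphic covariant derivatives $(\nabla^{\mathbb C})^k R^{\mathbb C}$, since these are universal polynomial expressions in the metric coefficients and their derivatives. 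Thus the endomorphism-valued holomorphic functions $((\nabla^{\mathbb C})^k R^{\mathbb C})(\,\cdot\,,\,\cdot\,)$ on $U$ are the holomorphic continuations of the corresponding real-analytic endomorphism-valued functions on $U\cap M$.

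Next I would carry out the linear-algebra step at the point $p$. Under the identification $\mathfrak{so}(T^{1,0}_pM^{\mathbb C})\cong\mathfrak{so}(T_pM)^{\mathbb C}$ recalled in the excerpt, the operators $((\nabla^{\mathbb C})^k_{v_1,\dots,v_k}R^{\mathbb C})(v_{k+1},v_{k+2})$ with all $v_i\in T^{1,0}_pM^{\mathbb C}$ are obtained from the real operators $((\nabla^k_{w_1,\dots,w_k}R)(w_{k+1},w_{k+2})\in\mathfrak{so}(T_pM)$, $w_i\in T_pM$, by complex-multilinear extension in the arguments $w_i$. Since $T_pM$ spans $T^{1,0}_pM^{\mathbb C}$ over $\mathbb C$, complex-multilinearity shows that the $\mathbb C$-span of the former set equals the $\mathbb C$-span of the latter; that is, the holonomy algebra of $(M^{\mathbb C},g^{\mathbb C})$ at $p$ equals the complex span inside $\mathfrak{so}(T_pM)^{\mathbb C}$ of all the real curvature endomorphisms and their real iterated covariant derivatives, which by the Ambrose--Singer description of $\mathfrak h$ is exactly $\mathfrak h^{\mathbb C}$. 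Finally I would note that connectedness of $M^{\mathbb C}$ guarantees, as usual, that the holonomy algebra is independent of the chosen base point (up to the canonical identifications along paths), so the germ-level computation at $p\in M$ determines the holonomy algebra of $(M^{\mathbb C},g^{\mathbb C})$ everywhere.

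The main obstacle, and the only point requiring genuine care rather than routine bookkeeping, is the justification that the holonomy algebra of the holomorphic Riemannian manifold—defined here purely infinitesimally, as the span of iterated covariant derivatives of $R^{\mathbb C}$ at one point—is the correct object and is base-point independent on the connected manifold $M^{\mathbb C}$. In the smooth (real) setting this is the Ambrose--Singer theorem together with the fact that $(M,g)$ is real-analytic, so that the infinitesimal holonomy at $p$ already equals the full holonomy algebra; in the holomorphic setting one argues analogously, using that $g^{\mathbb C}$ is holomorphic (hence in particular real-analytic) and invoking the holomorphic version of Ambrose--Singer, or alternatively reducing to the real-analytic statement applied to the underlying real-analytic metric of $(M^{\mathbb C},g^{\mathbb C})$. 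Once this identification of ``infinitesimal'' with ``full'' holonomy is in place on both sides, the comparison of the two spans via holomorphic continuation and complex-multilinear extension, as described above, completes the proof. I would phrase the write-up so that the reader sees the essential content is: (i) Christoffel symbols and curvature of $g^{\mathbb C}$ restrict to those of $g$; (ii) $T_pM$ spans $T^{1,0}_pM^{\mathbb C}$; (iii) Ambrose--Singer plus real-analyticity on both sides.
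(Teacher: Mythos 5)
Your proposal is correct and follows essentially the same route as the paper: Ambrose--Singer for real-analytic manifolds identifies $\mathfrak h$ with the span of the iterated covariant derivatives of $R$, and the holomorphic objects $\nabla^{\mathbb C}$, $R^{\mathbb C}$ and its derivatives are the complex-analytic continuations of the real ones, so their span over $T^{1,0}_pM^{\mathbb C}$ is $\mathfrak h^{\mathbb C}$. The ``main obstacle'' you flag is dissolved in the paper by fiat, since the holonomy algebra of a holomorphic Riemannian manifold is there \emph{defined} at a point $p$ as the span of the endomorphisms $((\nabla^{\mathbb C})^k_{v_1,\dots,v_k}R^{\mathbb C})(v_{k+1},v_{k+2})$, so no holomorphic Ambrose--Singer theorem or base-point independence needs to be invoked.
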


\begin{proof} By the Ambrose-Singer theorem for real analytic semi-Riemannian manifolds we know that $\mathfrak{h}$
is spanned by all the endomorphisms $(\nabla^k_{v_1,\ldots ,v_k} R)(v_{k+1}, v_{k+2})\in \mathfrak{so}(T_pM)$, where $v_1, 
\ldots , v_{k+2}\in T_pM$ 
and $k\ge 0$.  From the 
definition of $g^\mathbb{C}$ as complex-analytic extension of $g$ it is clear that the Levi-Civita connection $\nabla^\mathbb{C}$ 
of $g^\mathbb{C}$ coincides with the complex-analytic extension of the Levi-Civita connection $\nabla$ of $g$. The same
relation holds for the curvature tensors and their covariant derivatives. This implies the proposition. 
\end{proof}
Next we consider the real analytic manifold $N$ of dimension $2n$ underlying the complex manifold 
$M^\mathbb{C}$. It carries a corresponding integrable complex structure  $J$ and we can identify $(N,J)$ with $M^\mathbb{C}$. We endow $N$ with the real analytic semi-Riemannian metric 
\begin{equation} \label{gN:eq}  g_N := 2\, \mathrm{Re}\, g^\mathbb{C}.\end{equation} 
Note that $g_N$ can be considered as a (fibrewise) real bilinear form on $TN$ by means of the canonical identification
\[ TN \cong T^{1,0}N, \quad X\mapsto X^{1,0}=\frac12 (X-iJX).\] 
The factor $2$ in \re{gN:eq}  is chosen such that $g^\mathbb{C}$ is obtained by restricting (the complex bilinear extension of) $g_N$ 
to $T^{1,0}N$. 

We observe that the metric $g_N$ can be defined on the real analytic manifold $N$ underlying any 
holomorphic Riemannian manifold $(M^\mathbb{C}, g^\mathbb{C})$ irrespective of whether 
$(M^\mathbb{C}, g^\mathbb{C})$ is a complexification of a semi-Riemannian manifold $(M,g)$. 
\begin{theorem} 
\label{choltheo} Let $(M^\mathbb{C}, g^\mathbb{C})$ be a connected holomorphic Riemannian manifold
and $(N,g_N)$ the corresponding semi-Riemannian manifold. Then  $(N,g_N)$ has neutral signature and 
its holonomy algebra is isomorphic to the holonomy algebra of $(M^\mathbb{C}, g^\mathbb{C})$. 
\end{theorem}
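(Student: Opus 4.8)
The plan is to show that, after complexifying the tangent bundle of $N$, the Levi-Civita connection of $g_N$ is precisely the realification of the holomorphic Levi-Civita connection $\nabla^{\mathbb C}$ of $g^{\mathbb C}$; the holonomy statement is then extracted by comparing the Ambrose--Singer descriptions of the two holonomy algebras. For the signature, I would first observe that $J$ is an \emph{anti}-isometry of $g_N$: since $g^{\mathbb C}$ is complex bilinear and $(JX)^{1,0}=iX^{1,0}$, one gets $g_N(JX,JY)=2\,\mathrm{Re}\,g^{\mathbb C}(iX^{1,0},iY^{1,0})=-g_N(X,Y)$. Hence $g_N$ and $-g_N$ are congruent, so the signature of $g_N$ is balanced, i.e.\ neutral; non-degeneracy of $g_N$ follows at once from that of $g^{\mathbb C}$ together with the defining relation between $g_N$ and $g^{\mathbb C}$.

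For the holonomy, write $T^{\mathbb C}N=T^{1,0}N\oplus T^{0,1}N$ and let $g_N^{\mathbb C}$ be the complex-bilinear extension of $g_N$. By the normalisation in \re{gN:eq} one has $g_N^{\mathbb C}|_{T^{1,0}N}=g^{\mathbb C}$ and, by conjugation, $g_N^{\mathbb C}|_{T^{0,1}N}=\overline{g^{\mathbb C}}$; a short computation in a holomorphic chart shows in addition that $T^{1,0}N$ and $T^{0,1}N$ are $g_N^{\mathbb C}$-orthogonal. Next I would introduce the connection $D$ on $T^{\mathbb C}N$ that acts as $\nabla^{\mathbb C}$ on $T^{1,0}N$, as $\overline{\nabla^{\mathbb C}}$ on $T^{0,1}N$, and trivially on the mixed derivatives (so in a holomorphic frame $D_{\partial_{z^i}}\partial_{z^j}=\Gamma^k_{ij}\partial_{z^k}$, with the $\Gamma^k_{ij}$ the holomorphic Christoffel symbols of $\nabla^{\mathbb C}$, while $D_{\partial_{z^i}}\partial_{\bar z^j}=D_{\partial_{\bar z^i}}\partial_{z^j}=0$). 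Because the $\Gamma^k_{ij}$ are symmetric in $i,j$ and holomorphic, and because the components $g_{ij}(z)$ of $g^{\mathbb C}$ are holomorphic, $D$ is torsion-free and metric for $g_N^{\mathbb C}$; as $D$ also commutes with complex conjugation, it is the complexification of a real connection on $TN$, which is therefore $\nabla^{g_N}$.

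From this, $R^{g_N}$ and all its iterated $\nabla^{g_N}$-covariant derivatives are block-diagonal for $T^{\mathbb C}N=T^{1,0}N\oplus T^{0,1}N$, vanish whenever one argument is of type $(1,0)$ and another of type $(0,1)$ (again because $\nabla^{\mathbb C}$ has holomorphic Christoffel symbols), restrict on $T^{1,0}N$ to the covariant derivatives of $R^{\mathbb C}$, and on $T^{0,1}N$ to their conjugates. Since $g_N$ is real-analytic (the real part of a holomorphic tensor field in holomorphic charts), the Ambrose--Singer theorem for real-analytic semi-Riemannian manifolds identifies $\hol(N,g_N)\subset\mathfrak{so}(T_pN,g_{N,p})$ with the span of the endomorphisms $(\nabla^{g_N})^k_{u_1,\dots,u_k}R^{g_N}(u_{k+1},u_{k+2})$, $u_i\in T_pN$; complexifying and decomposing each $u_i$ into its $(1,0)$- and $(0,1)$-parts, each such generator becomes $A\oplus\bar A$, where $A=((\nabla^{\mathbb C})^k_{v_1,\dots,v_k}R^{\mathbb C})(v_{k+1},v_{k+2})$ is a generator of $\hol(M^{\mathbb C},g^{\mathbb C})$ in the sense of the definition above (and all $v_i\in T^{1,0}_pM^{\mathbb C}$ occur). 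It follows that $A\mapsto A^{\mathbb C}|_{T^{1,0}_pM^{\mathbb C}}$ is a well-defined, injective homomorphism of real Lie algebras $\hol(N,g_N)\to\mathfrak{so}(T^{1,0}_pM^{\mathbb C},g^{\mathbb C}_p)$ whose image is exactly $\hol(M^{\mathbb C},g^{\mathbb C})$, giving the asserted isomorphism. The one genuinely substantive step is the identification of $\nabla^{g_N}$ with the realification of $\nabla^{\mathbb C}$, i.e.\ checking that in holomorphic coordinates the Levi-Civita connection of $g_N$ has holomorphic Christoffel symbols and no mixed components; the rest — the block structure of the curvature, the appeal to Ambrose--Singer, and the bookkeeping that matches $\hol(M^{\mathbb C},g^{\mathbb C})$, \emph{viewed as a real Lie algebra}, with $\hol(N,g_N)$ — is routine, the only thing to watch being that the isomorphism is one of real, not complex, Lie algebras.
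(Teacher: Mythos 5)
Your proposal is correct and follows essentially the same route as the paper: the connection $D$ you define on $T^{\mathbb{C}}N$ (holomorphic Christoffel symbols on $T^{1,0}N$, conjugates on $T^{0,1}N$, vanishing mixed derivatives) is exactly the paper's auxiliary connection $\nabla'$, which is likewise shown to be real, metric and torsion-free and hence equal to the Levi-Civita connection of $g_N$, and the holonomy identification is then extracted from Ambrose--Singer via the block decomposition $A\oplus\bar A$ and restriction to $T^{1,0}$, just as in the paper's isomorphism $\Phi$. The signature argument via $J$ being an anti-isometry is also the paper's.
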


\begin{proof} 
Note first that $g_N(J\cdot , J\cdot ) = -g_N$, since $g_N$ is of type $(2,0) + (0,2)$ with respect to $J$. 
This implies that $g_N$ has neutral signature, since $J$ it maps a maximal definite subspace of $T_pN$ to a 
maximal definite subspace of the same dimension and of opposite signature.

We consider first the Lie algebra $\frak{so}(T_pN)$, $p\in N$, with respect to $g_N$ and its subalgebra
\[ \frak{so}(T_pN)^J := \{ A \in \frak{so}(T_pN) \mid [A, J]=0\}.\]
The latter can be considered as a complex Lie algebra with the complex structure $A \mapsto JA$. Indeed, 
$JA$ is $g_N$-skew-symmetric as the product of a symmetric with a commuting skew-symmetric endomorphism. 
The symmetry of $J$ follows from the fact that  $J$ is an anti-isometry squaring to minus one. 

We claim that $\frak{so}(T_pN)^J$ 
is canonically isomorphic to the complex Lie algebra $\frak{so}(T^{1,0}_pN)$ with respect to $g^\mathbb{C}$. 
Using the metric $g_N$, we can identify $\frak{so}(T_pN)^J$ with the set of real points in $\bigwedge^{2,0}T_pN \oplus \bigwedge^{0,2}T_pN$
and the latter can be identified with $\bigwedge^{2,0}T_pN \cong \bigwedge^2T^{1,0}M$ by projecting to the $(2,0)$-component. Finally, using the metric $g^\mathbb{C}$,
we can identify $\bigwedge^2T^{1,0}M$ with $\frak{so}(T^{1,0}_pN)$. This yields a canonical isomorphism 
\begin{equation} \label{iso:eq}  \Phi: \frak{so}(T_pN)^J \rightarrow \frak{so}(T^{1,0}_pN)\end{equation} 
of complex vector spaces. It simply maps $A\in \frak{so}(T_pN)^J$ to its restriction to $T^{1,0}N$. Therefore it is even an isomorphism 
of Lie algebras.  

Next we show, for all $v_1,\ldots v_{k+2}\in T_pN$, that  under the canonical isomorphism \re{iso:eq} the tensor $(\nabla^N)^k_{v_1,\ldots ,v_k}R^N(v_{k+1},v_{k+2})$ is mapped 
to $(\nabla^\mathbb{C})^k_{w_1,\ldots ,w_k} R^\mathbb{C}(w_{k+1},w_{k+2})$, where $w_j= v_j^{1,0}$, $\nabla^N$ denotes the Levi-Civita connection 
of $g_N$ and $R^N$ its curvature. This implies the theorem, in virtue of the Ambrose-Singer 
theorem. First we show that $\nabla^N$ can be constructed from the holomorphic connection $\nabla^\mathbb{C}$. 
Let $\nabla'$ be the unique connection in $(TN)^\mathbb{C}$ with the following properties: 
\begin{enumerate} 
\item $\nabla'_ZW=\nabla^\mathbb{C}_ZW$ for all holomorphic vector fields $Z, W$ on $M^\mathbb{C}$. 
\item $\nabla'_{\bar{Z}}W=0$ for all holomorphic vector fields $Z, W$ on $M^\mathbb{C}$. 
\item $\nabla'$ is real, that is restricts to a connection in $TN$. 
\end{enumerate}
Notice that the above properties imply that the subbundles $T^{1,0}N$ and $T^{0,1}N$ 
are $\nabla'$-parallel and, hence, that $\nabla'J=0$. 
Moreover, using these properties, it is straightforward to check that $\nabla'$ is metric torsion-free, since  $\nabla^\mathbb{C}$ is. 
This implies that $\nabla'$ (when considered as a connection in $TN$) coincides with the Levi-Civita connection 
$\nabla^N$. As a consequence, we see that $\nabla^NJ=0$ and thus $(\nabla^N)^k_{v_1,\ldots ,v_k}R^N(v_{k+1},v_{k+2})\in \frak{so}(T_pN)^J$. 
Now let $X, Y$ be real vector fields on an open set $U\subset N$ which are infinitesimal 
automorphisms of $J$. Then we have the formula 
\begin{equation}  \label{fund:eq} (\nabla_X^NY)^{1,0} = \nabla^\mathbb{C}_{X^{1,0}}Y^{1,0}.\end{equation}
This follows immediately from the defining properties of $\nabla'=\nabla^N$ by decomposing
$X = Z + \bar{Z}$ and $Y=W + \bar{W}$, where $Z=X^{1,0}, W=Y^{1,0}$ are holomorphic. 
From \re{fund:eq} we deduce that 
\[ \left( \left( (\nabla^N)^k_{v_1,\ldots ,v_k}R^N(v_{k+1},v_{k+2})\right) v_{k+3}\right)^{1,0} = \left((\nabla^\mathbb{C})^k_{w_1,\ldots ,w_k} R^\mathbb{C}(w_{k+1},w_{k+2})\right)w_{k+3},\] 
for all $v_1,\ldots ,v_{k+3}\in T_pN$, where we recall that $w_j=v_j^{1,0}$. 
Since the left-hand side is precisely 
\[ \Phi  \left( (\nabla^N)^k_{v_1,\ldots ,v_k}R^N(v_{k+1},v_{k+2})\right) w_{k+3},\]
we can conclude that 
\[ \Phi  \left( (\nabla^N)^k_{v_1,\ldots ,v_k}R^N(v_{k+1},v_{k+2})\right)= (\nabla^\mathbb{C})^k_{w_1,\ldots ,w_k} R^\mathbb{C}(w_{k+1},w_{k+2}),\]
finishing the proof. 
\end{proof}
This leads to the following consequence:
\bfolg\label{cor26}
The complex holonomies 
\begin{equation}\label{berger4}
\so(n,\C)\subset\so(n,n), \quad
\g_2^\C\subset \so(7,7),\quad \spin(7,\C)\subset\so(8,8),
\end{equation}
are holonomy algebras of time-like cones.
\efolg
\bprf 
This follows from the above considerations and from the fact that the compact real forms of the complex holonomy algebras in (\ref{berger4}) can be realised by timelike cones over negative definite manifolds. Indeed, if $(\hm,\hg)$ is a time-like cone with  holonomy $\so(n)$, $\g_{2} \subset  \so(7)$, or $
\spin(7) \subset  \so(8)$, then there is the Euler vector field $\xi \in \Gamma(T\hm)$. Hence the real analytic metric $\hg^\C$ on $\hm^\C$ has the   holomorphic Euler vector field $\xi^\C$ with  $\hnab^\C\xi^\C=\Id$. On the real manifold $N=\hm^\C$  we then have that $\eta=2 \mathrm{Re}\, \xi^\C$ satisfies $\nabla^{N}\eta=\Id$, as a consequence of equation (\ref{fund:eq})
applied here to $N=\hm^\C$ instead of $M^\C$. By  \Cref{locconeprop} we then get that $N$ is locally a cone, which by \Cref{choltheo} has one of the complex holonomies in (\ref{berger4}) as holonomy algebra.
\eprf
This proof can be made more explicit in local coordinates. 
Locally the metric $\hg$ is of the form \[\hg=-\d r^2+r^2 g_{ij}(x^k)\d x^i \d x^j\] with Euler vector field $\xi=r\partial_r \in \Gamma(T\hm)$. The analytic  metric  $\hg^\C$ on $\hm^\C$ then is of the form \[\hg^\C=-\d u^2 +u^2 g_{ij}(z^k) \d z^i \d z^j\] with coordinates $(u=r+is,z^1, \ldots, z^n)$ with $z^k=x^k+\mathrm{i} y^k$ and  holomorphic Euler vector field $\xi^\C=u\partial_u$ with  $\hnab^\C\xi^\C=\Id$. 
Then the metric $\widehat{h}=\frac12 g_N$ on $N=\widehat{M}^\C$ is given by 
\[ \widehat{h} = -\d r^2 +\d s^2+(r^2-s^2) \mathrm{Re}(g_{ij} (z^k) \d z^i \d z^j) - 2 r s\, \mathrm{Im} (g_{ij} (z^k) \d z^i \d z^j).\]
One can directly check that $\eta=r\partial_r+s\partial_s$ satisfies $\nabla^N\eta=\Id$. Moreover, the cone coordinate with respect to $\widehat{h}$ is given by $\rho=\sqrt{r^2-s^2}$, which satisfies $\widehat{h}(\eta , \cdot ) =-\rho\d \rho$.

\bbem\label{realformremark}
Finally, we note that it is possible to construct examples of pseudo-Riemannian cones with these holonomies using different real forms of the  complexified metrics and Proposition \ref{prop24} and  Corollary \ref{cor26}: For example $\SL(2,\rr)\times \SL(2,\rr)$ admits a unique left-invariant nearly pseudo-K\"{a}hler structure, which is a different real form of the complexification of the Riemannian nearly K\"{a}hler structure on $\SU(2)\times \SU(2)$, \cite{SchaferSchulte-Hengesb10}. Hence the cone metrics are different real forms of the holomorphic cone metric. Since the cone over $\SU(2)\times \SU(2)$ has holonomy $\g_2$, the cone over $\SL(2,\rr)\times \SL(2,\rr)$ must have holonomy equal to $\g_{2(2)}$.
\ebem

\subsection{Manifolds with parallel  null line bundle}
In the following manifolds with a parallel null line bundle will be crucial. In this section we will collect some facts about them.

Let $(M,g)$ be a semi-Riemannian manifold with a {\em parallel null line bundle $\L$}, i.e., $\L$ is a rank $1$ subbundle of $TM$ the  fibres of which are null  with respect to the metric $g$ and invariant under parallel transport with respect to the Levi-Civita connection $\nabla$ of $g$. 
This implies that every non-vanishing section $\chi\in \Gamma(\L)$ satisfies 
\begin{equation}\label{recur}
\nabla \chi =\alpha\otimes \chi,\end{equation}
for a uniquely determined $1$-form $\alpha$.
Any  vector field  that satisfies  \eeqref{recur} for some $1$-form $\alpha$ is called a {\em recurrent vector field}. 
\begin{proposition}
Let $\chi$ be a recurrent vector field on a connected semi-Riemannian manifold
$(M,g)$. Then the function $f=g(\chi,\chi) $ is either  everywhere   positive,  negative or  zero. In particular, $\chi $ can only have zeros if $f\equiv 0$. 
\end{proposition}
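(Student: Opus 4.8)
The plan is to differentiate $f = g(\chi,\chi)$ and observe that it satisfies a simple first-order linear differential equation along curves, which forces its sign to be locally constant and hence, by connectedness, globally constant.

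First I would compute, for an arbitrary vector field $X$ on $M$, using the metric compatibility of the Levi-Civita connection and the recurrence relation \eqref{recur}:
\[
X(f) \;=\; 2\,g(\nabla_X\chi,\chi) \;=\; 2\,\alpha(X)\,g(\chi,\chi) \;=\; 2\,\alpha(X)\,f,
\]
i.e.\ $\d f = 2 f\,\alpha$ as $1$-forms on $M$.

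Next, fix any two points $p,q\in M$. Since $M$ is a connected manifold it is path-connected, so pick a smooth curve $\gamma\colon[0,1]\to M$ with $\gamma(0)=p$ and $\gamma(1)=q$. Setting $h(t):=f(\gamma(t))$, the identity above gives the linear homogeneous ODE $h'(t) = 2\,\alpha(\dot\gamma(t))\,h(t)$, whose solution is
\[
h(t) \;=\; h(0)\,\exp\!\Big(2\!\int_0^t \alpha(\dot\gamma(s))\,\d s\Big).
\]
In particular $f(q)=h(1)$ and $f(p)=h(0)$ have the same sign: both positive, both negative, or both zero. As $p$ and $q$ were arbitrary, $f$ is everywhere positive, everywhere negative, or identically zero. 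Finally, if $\chi$ vanishes at some point $p$, then $f(p)=g(\chi(p),\chi(p))=0$, which rules out the first two alternatives and forces $f\equiv 0$.

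There is no serious obstacle here; the only points requiring a word of care are invoking path-connectedness of the connected manifold $M$, and noting that the computation along $\gamma$ is entirely local, so it is insensitive to the behaviour of $\chi$ (and of the globally given $\alpha$) away from the curve — in particular the argument is valid even if $\chi$ has zeros.
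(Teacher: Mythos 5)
Your proof is correct and rests on the same key identity as the paper's, namely $X(f)=2\alpha(X)f$ obtained from metric compatibility and the recurrence relation. The only cosmetic difference is how you conclude: you integrate the linear ODE explicitly along a path joining two arbitrary points, whereas the paper observes that the three sets $\{f>0\}$, $\{f<0\}$, $\{f=0\}$ are all open and invokes connectedness; both finishes are valid and essentially equivalent.
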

\begin{proof}
The equation (\ref{recur}) yields the ODE
$X(f)=2 \alpha(X)f$  for every vector field $X$.  These ODEs imply that if $f$ vanishes at a point, then $f$ vanishes in a neighbourhood of this point. 
Due to the continuity of $f$ this shows that 
$M$ is a disjoint union of the three open sets $\{f>0\}$, $\{f<0\}$ and $\{f=0\}$. Now, since $M$ is connected, the proposition follows. 
\end{proof}
Hence,
 locally the existence of a parallel null line bundle  is equivalent to the existence of a {\em recurrent null vector field}, where we recall that a vector field $\chi$ is null if $g(\chi,\chi)=0$ and $\chi$ does not vanish \cite[Definition 3 in Chapter 3]{oneill83}. 
 Moreover, a nowhere vanishing
recurrent vector field $\chi$  can be rescaled to parallel vector field $ \lambda \cdot \chi$, for a  non-vanishing function $\lambda$, if and only if  the $1$-form $\alpha $ is exact. Indeed, if  $\alpha=dh$, then 
\[  \nabla (\mathrm{e}^{-h} \chi)=0,\]
then $\lambda=\mathrm{e}^{-h}$ so that $\lambda\cdot \chi$ is parallel\footnote{ 
For non-null  recurrent vector fields this shows that they  can always   be  rescaled locally to a parallel vector field, as  
$0=\R(X,Y,\chi,\chi)= \d\alpha(X,Y)\, g(\chi,\chi)$ yields that  $\alpha$ is  closed, or more explicitly, $\alpha =\frac{1}{2} \d\ln f $ with $f= g(\chi,\chi)$ and hence $\frac{1}{\sqrt{f}}\chi$ is parallel.}.
Conversely, if  $\lambda \cdot \chi$ is parallel, then 
\[
0=\R (X,Y)\chi=\d\alpha(X,Y)\chi\]
for all $X,Y\in TM$.

Hence, 
on  simply connected manifolds  $(M,g)$,   nowhere vanishing recurrent vector fields can be rescaled to parallel ones if and only if $\alpha $ is closed.
The choice we have when locally choosing a recurrent vector field that spans a null line bundle $\L$ can be used to find special recurrent sections of $\L$.
\blem\label{recurlemma}
Let $\L$ be a parallel null line bundle. Then locally there is a recurrent gradient vector field $\chi$ which spans $\L$. This vector field satisfies that $\nabla\chi= h\, \chi^\flat \otimes\chi$ for a function $h$.
\elem
\bprf
Since $\L$ is parallel, the hyperplane distribution $\L^\perp=\{X\in TM\mid g(X,\L)=0$  is parallel and hence involutive. Hence by Frobenius' Theorem $\L^\perp$ is integrable and the integral manifolds are given as 
$f\equiv constant$ for some local function $f$. Hence $\L^\perp=\ker(df)$ and the gradient $\chi:=\grad( f)$ of $f$ spans $\L$. Then $\chi$ is recurrent, i.e.,  $\nabla \chi=\alpha\otimes \chi$. But then $\chi=\grad( f)$ implies  that 
\[0=\d\chi^\flat = \alpha\wedge \chi^\flat,\]
which shows that $\alpha=h\,\chi^\flat $ for a local function $h$.
\eprf
\section{Cones with parallel null lines}
\label{localsec}

In this section we assume that the cone (\ref{conemetric}) over a semi-Riemannian manifold $(M,g)$ admits a null line that is invariant under parallel transport. We will show that locally this implies that the cone admits a parallel null vector field and that the base $(M,g)$ 
is locally an exponential extension of a semi-Riemannian manifold $(M_0,g_0)$, see Definition \ref{ext:def}. The total space of the cone will then be shown to be locally isometric to a double warped extension $(\widetilde{M},\tg)$ of $(M_0,g_0)$, see Definition \ref{ext:def}. This will generalise our results  for Lorentzian cones in 
\cite[Section 9]{acgl07}.
\bs\label{parallel:prop}
Let $(\hm,\hg)$ be a timelike cone and assume that   $(\hm,\hg)$ admits a parallel null line $\mathbf{L}$. Then
the following holds: 
\begin{enumerate}
\item[(i)] The set $\hm_{\mathrm{reg}}$ where $\mathbf{L}$ is not perpendicular to the Euler vector field $\xi$ is open and dense
and invariant under the flow of $\xi$. So, in particular, $\hm_{\mathrm{reg}} = \widehat{M_{\mathrm{reg}}}$, where 
$M_\mathrm{reg} := \hm_{\mathrm{reg}} \cap M$. 
\item[(ii)] $\mathbf{L}$ is flat and, hence,  locally (and globally if $M$ is simply connected) there is a parallel 
null vector field that spans $\mathbf{L}$. 
\end{enumerate}
\es
\bprf 
By passing to the universal cover of $(\hm, \hg)$, that is to the cone over the universal cover of $M$, we can assume that $M$ and $\hm$ are simply connected. Hence, we can assume that the parallel null line $\mathbf{L}$ is spanned by
 a nowhere vanishing recurrent vector field 
 $\chi$ on $(\hm,\hg)$. Then we decompose 
\[\chi=f\xi+Z,\]
where $Z$ is tangent to $\M$ and nowhere vanishing. 
We claim that the function $f$ cannot vanish on a nonempty open set. If it did, formulae \re{coneLC}
 would give
\[ 
\alpha(X)Z=\hnab_X   \chi=\nabla_XZ +\gg (X,Z)\xi,
\]
on the open set, and hence 
 $\gg (X,Z)=0$ for all $X\in T\M$, which is a contradiction. 
This proves that the open set $\hm_{\mathrm{reg}}=\{p\in \hm\mid f(p)\not=0\}$ is dense. The invariance of $\hm_{\mathrm{reg}}$ under the homothetic flow of $\xi$ follows from the invariance of $\mathbf{L}$ under the flow. The latter 
is obtained by writing the Lie derivative as
$\mathcal{L}_\xi = \widehat{\nabla}_\xi -\mathrm{Id}$ and using  that $\mathbf{L}$ 
is parallel.

On $\hm_{\mathrm{reg}}$ we have 
\[
\d\alpha(X,\xi)\chi=\hR(X,\xi)\chi=0\]
and 
\[\d\alpha(X,Y)\chi= \hR(X,Y)\chi= \hR(X,Y)Z\in  T\M,\]
for $X, Y\in TM$. This implies $\d\alpha=0$, since $\hm_{\mathrm{reg}}$ is dense, proving that $\mathbf{L}$ is flat. 
\eprf
In the next proposition we describe an example of a cone with a parallel null line before showing that 
every  example is locally of this form. 


\begin{definition} \label{ext:def} Let $( M_0, \gg_0)$ be a semi-Riemannian manifold. Then the warped products $(\M=\rr\times M_0,\gg=ds^2+\e^{2s}\gg_0)$ and $(\widetilde{M} =\mathbb{R}^+ \times \mathbb{R}^- \times M_0, \widetilde{g}=2\, \d u\, \d v+u^2\gg_0)$ will be called the \emph{exponential extension} and  the \emph{double warped extension} of $(M_0, \gg_0)$, respectively. 
\end{definition}
\bs\label{warpconelemma} Let $( M_0, \gg_0)$ be a semi-Riemannian manifold. 
The time-like cone $(\hm,\hg)$ over the exponential extension $(M,g)$ of $( M_0, \gg_0)$  is globally isometric to 
the double warped extension  $(\widetilde{M} , \widetilde{g})$ of $(M_0, \gg_0)$. In particular, the cone admits the parallel null vector field 
$\del_v$.
\es
\bprf
The cone metric over $(\M,\gg)$ is given by
\[ \hg=- \d r^2+r^2 \d s^2 +r^2\mathrm{e}^{2s}\gg_0,\]
with $r\in \rr^+$ and $s\in \rr$.
For the diffeomorphism
\belabel{psi:eq} \psi: \hm=\rr^+\times \rr\times M_0\ni (r,s,p)\longmapsto \left(u=r\e^{s}, v=-\tfrac{1}{{2}}r\e^{-s}, p\right)\in \widetilde{M}=\rr^+\times \rr^-\times  M_0\eeq
one checks that
\[
(\psi^{-1})^*\hg = 2\,\d u\,\d v+u^2\gg_0.\]
This proves the statement.
\eprf
\btheo \label{firstmain:thm}
Let $(\hm,\hg)$ be a time-like cone over a semi-Riemannian manifold $(M,g)$. Assume that 
$(\hm,\hg)$ admits a parallel null line $\mathbf{L}$.  
Then the open dense subset $\hm_{\mathrm{reg}} \subset (\hm, \hg )$, cf.\ Proposition \ref{parallel:prop}, is locally isometric to the double warped extension $(\widetilde{M} , \widetilde{g})$ of  a semi-Riemannian manifold $(M_0, \gg_0)$ and the open dense subset $M_{\mathrm{reg}}\subset (M,g)$ is locally isometric to the exponential extension of $( M_0, \gg_0)$.\etheo
\bprf
Since we have to show the existence of a {\em local} isometry,
by Proposition \ref{parallel:prop}, we can assume that $\mathbf{L}$ admits a parallel trivializing section $\chi$. 
We write the parallel null vector field $\chi$ on $\hm$ as
\[\chi=\hat f\xi +\hat Z\]
with $\hat Z$ a nowhere vanishing vector field tangent to $\M$  and $\hat f$ a smooth function on $\hm$. We will show that $\hat Z$ defines  vector field $Z$ on $\M$. From
\[ [ \xi, \hat Z] =
- d\hat f(\xi)\xi +[\xi, \chi]
=
- d\hat f(\xi) \xi - \hnab_\chi\xi 
=
- d\hat f(\xi) \xi - \chi =
 -(d\hat f (\xi) + \hat f) \xi -\hat Z, \]
with $[ \xi, \hat Z] $ being tangent to $\M$, 
we get on the one hand that
\[\d\hat f(\xi) +\hat f = r\del_r (\hat f)+\hat f=0,\]
and on the other that
\[[ \xi, \hat Z]+\hat Z =0\]
The first equation shows that 
\[\hat f= \frac{1}{r} f \]
with $f$ a function on $\M$ and the second that
\[ \hat Z=\frac{1}{r} Z,\]
with  $Z= r\hat Z$  a vector field on $\M$, i.e.,  $[\xi,Z]=0$.
Hence we have
\[\chi=\frac{1}{r} (f\xi +Z). \]
Differentiating in  direction of $X\in T\M$, by \re{coneLC} we get
\[
0
=
r\hnab_X \chi=
(df(X)+\gg(X,Z))\xi+f X+\nabla_XZ.
\]
which shows that
\[ Z=-\mathrm{grad}(f), 
\]
where $\mathrm{grad}$ denotes the gradient with respect to $\gg$, and
\belabel{nabZ10}
\nabla Z= -f\,\Id.
\eeq
Hence, the distribution $Z^\perp$ on $\M$ is integrable and its leafs are given by the level sets of $f$. The vector field  $Z$ is not only a gradient but also a conformal vector field, since from \re{nabZ10} we compute
\[\cal L_Z\gg=-2 f\gg. \]
Note also that on $M_{\mathrm{reg}} = \widehat{M}_{\mathrm{reg}}\cap M$, the vector field $Z$ is transversal to the level sets of $f$.
This follows from $df(Z)=g(\mathrm{grad}(f),Z)=-g(Z,Z)=-f^2$. 
Hence, locally on $M_{\mathrm{reg}}$ the metric $\gg$ is given as
\[
\gg=\frac{(\d f)^2}{f^2} +f^2\gg_0
\]
where $c^2g_0$ is the metric $\gg$ restricted to a level set $\{ f=c\}$. Setting $s=\log |f|$ and using  \Cref{warpconelemma} this proves the statement in the Theorem.
\eprf

The local geometry described in this theorem is summarised in diagram (\ref{diagram}) in the introduction. 
We also have the following global result. 
\btheo \label{glob:thm}
Let $(\hm,\hg)$ be a time-like cone over a simply connected and space-like complete semi-Riemannian manifold $(M,g)$. Assume that 
$(\hm,\hg)$ admits a parallel null line $\mathbf{L}$ which is nowhere perpendicular to $\xi$.  
Then $(\hm, \hg )$ is isometric to the double warped extension $(\widetilde{M} , \widetilde{g})$ of  a semi-Riemannian manifold $(M_0, \gg_0)$ and $(M,g)$ is  isometric to the exponential extension of $( M_0, \gg_0)$, cf.\ Definition \ref{ext:def}.
\etheo
\bprf
Since $M$ (and thus $\hm$) is simply connected, the flat line bundle $\mathbf{L}$ (see Proposition~\ref{parallel:prop}) admits a 
parallel section $\chi\neq 0$. By assumption, the function $\hg(\chi,\xi)$ has no zeroes.  As in the proof of Theorem \ref{firstmain:thm}, 
we can thus write 
\[ \chi=\frac{1}{r} (f\xi +Z)\]
for a nowhere vanishing function $f$ and $Z= -\mathrm{grad}(f)$ on $M=M_{\mathrm{reg}}$. 
Then $Z'=\frac{1}{f}Z$ is a space-like geodesic unit vector field, as follows from $g(Z,Z)=f^2$ and the equation (\ref{nabZ10}):
\[ \nabla_ZZ'= -\frac{df(Z)}{f^2}Z -Z = 0.\]
From the space-like completeness assumption  we conclude that $Z'$ is complete, giving rise to a global
diffeomorphism $M \simeq\mathbb{R}\times M_0$ under which the metric takes the form $g=ds^2 +e^{2s}g_0$. 
\eprf

\bbem The assumption that $\mathbf{L}$ is nowhere perpendicular to $\xi$ in Theorem \ref{glob:thm} cannot be dropped. 
In fact, the universal covering of anti-de Sitter space is simply connected and complete 
but any parallel line distribution over the time-like cone is somewhere perpendicular 
to $\xi$. In fact, a complete Lorentzian metric of constant negative curvature cannot be globally written in the form $ds^2 +e^{2s}g_0$, 
since the latter metric is incomplete, see \cite[Proposition 2.5]{acgl07}. Locally it admits such description, where the Lorentzian metric $g_0$ is 
moreover flat.  
\ebem

In  the following we will study metrics of the form $\tg=2 \d u\d v+u^2g_0$. For brevity we will  drop the index $0$ at $g_0$.

\section{Metrics of the form $\tg=2\d u\,\d v +u^2 \gg$}
\label{uvgsec}

\subsection{Levi-Civita connection, curvature and holonomy}
Let $\gg$ be a semi-Riemannian metric (of signature $(t,s)$) on a manifold $\M$ of dimension $n$.  We want to study the geometry and the holonomy of  metrics  of signature $(t+1,s+1)$ of the form
\belabel{uvcone}  \tg =2\d u\,\d v +u^2 \gg,\end{equation}
from now on to be considered on the maximal domain $\widetilde{M}:=\rr^+\times  \rr\times \M \supset \rr^+\times  \rr^-\times \M$.  
Such metrics admit a $2$-dimensional solvable group of homotheties given by
$(u,v,p)\mapsto (\e^{r}u,\e^{r}v+s,p)$.
Its infinitesimal generators are the parallel null vector field $\del_v$ and the homothetic vector field $U=u\del_u+v\del_v$.

There are obvious inclusions of $\M=\{1\}\times \{0\}\times \M\subset \tem$, $T\M\subset T\tem $ and $\Gamma(T\M)\subset \Gamma (T\tem)$. Using these identifications, the Levi-Civita connection of $\tg$ can be expressed by
\belabel{uvconenabla}
\begin{array}{rcl}
\tnab_XY = \nabla_X Y-u\, \gg(X,Y)\, \del_v  &\text{ and }&
\tnab _X\partial_u = \dfrac{1}{u}\ X
\end{array}
\end{equation}
with $X\in TM$, $Y\in \Gamma (T\M)$, $\nabla$ the Levi-Civita connection of $\gg$, and all other derivatives either vanish or are determined by the vanishing of the torsion of $\nabla$.  Note that the homothetic vector field $U=u\del_u+v\del_v$ satisfies $\tnab U=\mathrm{Id}$.
Moreover, for the curvature of $\tg$ one computes that
\belabel{uvconeR}
\begin{array}{rcl}
\del_v\inter \widetilde{\RR}&=& \del_u\inter \widetilde{\RR}\ =\ 0,
\\
\widetilde \R(X,Y)Z&=& \R(X,Y)Z, \quad \text{ for all }X,Y,Z\in T\M,\end{array}\end{equation}
where $\R$ is the curvature tensor of $(\M,\gg)$. Note that this implies for an arbitrary tensor field $Q$ that
\belabel{commute}
\tnab_{\del_u}\tnab_X Q=
\tnab_X\tnab_{\del_u} Q
\end{equation}
for all $X\in \Gamma (T\M)$.

For the derivatives of $\widetilde{\RR}$ we get the following formulae, which determine all possible  derivatives. 
First we observe that
\belabel{deluuR}
(\tnab_{\del_u}\widetilde{\RR})(\del_u,X)=0
\end{equation}
for all $X\in T\M$.
For the $q$-th derivative in $\del_u$-direction we compute
\belabel{delupR}
(\tnab_{\del_u} \cdots \tnab_{\del_u}\widetilde{\RR})(X,Y)Z=
\frac{(-1)^q(q+1)!}{u^q}\,\R(X,Y)Z.
\end{equation}
Moreover, a simple induction shows
\belabel{xpR}
\begin{array}{rcl}
(\tnab_{X_1} \cdots \tnab_{X_p}\widetilde{\RR})(X,Y)Z
&=&(\nabla_{X_1} \cdots \nabla_{X_p}\R)(X,Y)Z
\\
&&{}-u
\sum_{i=1}^p

(\nabla_{X_1}\stackrel{i\atop \uparrow}{\cdots }
\nabla_{X_p}\R)(X,Y,Z,X_i)\del_v,
\end{array}
\end{equation}
for all $X_i,X,Y,Z,W\in T\M$ and where the symbol $\stackrel{i\atop \uparrow}{\ldots}$ indicates the omission of the $i$th term.
In general, a straightforward computations shows
\begin{proposition}\label{curvprop}
The $(p+q)$th derivative of $\widetilde{\RR}$ is determined  by the relations  
\begin{eqnarray*}
\del_v\inter \tnab^k\widetilde{\RR}&=&0,\\ 
(\tnab_{\del_u}\tnab_{X_1}\cdots 
\tnab_{X_p}\widetilde{\RR})(Y,Z)(W)
&=&
(\tnab_{X_1}\tnab_{\del_u}\tnab_{X_2}\cdots 
\tnab_{X_p}\widetilde{\RR})(Y,Z)(W),
\end{eqnarray*}
and the formula
\be
\lefteqn{
(\tnab_{\del_u} \cdots \tnab_{\del_u}\tnab_{X_1}\cdots 
\tnab_{X_p}\widetilde{\RR})(X,Y)Z=}
\\
&=&
\frac{c(p,q)}{u^q}\left((
\nabla_{X_1}\cdots 
\nabla_{X_p}\R)(X,Y)Z
-u \sum_{i=1}^p
(\nabla_{X_1}\stackrel{i\atop \uparrow}{\cdots }
\nabla_{X_p}\R)(X,Y,Z,X_i)\del_v\right),
\ee
where $c(p,0)=1$ and $c(p,q)=(-1)^q(p+2)\cdot \ldots \cdot (p+q+1)$ when $q\ge 1$.
\end{proposition}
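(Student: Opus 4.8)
The plan is to verify the three assertions in turn, each reducing to formulas already recorded in this section. I would begin with the two statements that do not involve the constant $c(p,q)$. First, the parallel null vector field $\del_v$ satisfies $\tnab\del_v=0$, and $\del_v\inter\widetilde{\RR}=0$ by \re{uvconeR}; since contracting a parallel vector field into a tensor commutes with $\tnab$, an induction on $k$ gives $\del_v\inter\tnab^k\widetilde{\RR}=0$, and one similarly checks $\tnab_{\del_v}(\tnab^k\widetilde{\RR})=0$, so no information is lost by restricting attention to derivatives in the $\del_u$- and $T\M$-directions. The second (commutation) relation is precisely \re{commute} applied to the tensor field $Q=\tnab_{X_2}\cdots\tnab_{X_p}\widetilde{\RR}$; iterating it moves every $\tnab_{\del_u}$ to the far left, so it suffices to analyse $\tnab_{\del_u}$ applied $q$ times to $\tnab_{X_1}\cdots\tnab_{X_p}\widetilde{\RR}$, evaluated on $X_1,\dots,X_p,X,Y,Z\in\Gamma(T\M)$.

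For the explicit formula I would induct on $q$ with $p$ fixed. The base case $q=0$ is exactly \re{xpR}, i.e.\ the assertion with $c(p,0)=1$. Abbreviate
\[A:=(\nabla_{X_1}\cdots\nabla_{X_p}\R)(X,Y)Z,\qquad B:=\sum_{i=1}^p(\nabla_{X_1}\stackrel{i\atop\uparrow}{\cdots}\nabla_{X_p}\R)(X,Y,Z,X_i).\]
Both are assembled from $\gg$, $\R$, and the vector fields $X_i,Y,Z$ on $\M$; hence $A$ is a vector field on $\M$, regarded as an element of $\Gamma(T\tem)$ via the inclusion, $B$ is a function, and neither depends on $u$ or $v$. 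Write $T_q$ for $\tnab_{\del_u}$ applied $q$ times to $\tnab_{X_1}\cdots\tnab_{X_p}\widetilde{\RR}$, so that the induction hypothesis reads $T_q(X,Y)Z=\tfrac{c(p,q)}{u^{q}}\bigl(A-uB\,\del_v\bigr)$.

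Now I would compute $T_{q+1}=\tnab_{\del_u}T_q$. Since $T_q$ is tensorial in its $p+3$ free vector arguments $X_1,\dots,X_p,X,Y,Z$, which all lie in $\Gamma(T\M)$, since $\tnab_{\del_u}W=\tfrac1u W$ for every $W\in\Gamma(T\M)$ (the second line of \re{uvconenabla}, together with $[\del_u,W]=0$ and torsion-freeness), and since $\tnab_{\del_u}$ annihilates the $\del_u$-slots, one has
\[T_{q+1}(X,Y)Z=\tnab_{\del_u}\!\bigl(T_q(X,Y)Z\bigr)-\tfrac{p+3}{u}\,T_q(X,Y)Z.\]
Differentiating the hypothesis and using $\del_u(u^{-q})=-q\,u^{-q-1}$, $\tnab_{\del_u}A=\tfrac1u A$, and $\tnab_{\del_u}\del_v=0$, one gets $\tnab_{\del_u}(T_q(X,Y)Z)=\tfrac{(1-q)c(p,q)}{u^{q+1}}A+\tfrac{(q-1)c(p,q)}{u^{q}}B\,\del_v$; adding the $-\tfrac{p+3}{u}T_q$ term collapses everything to $\tfrac{-(p+q+2)c(p,q)}{u^{q+1}}\bigl(A-uB\,\del_v\bigr)$. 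This reproves the shape of the formula at level $q+1$ and forces the recursion $c(p,q+1)=-(p+q+2)\,c(p,q)$; with $c(p,0)=1$ it solves to $c(p,q)=(-1)^q(p+2)(p+3)\cdots(p+q+1)$ for $q\ge1$, as claimed.

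I expect no conceptual obstacle; the care is entirely in the bookkeeping. The two points to watch are the factor $p+3$ — all three curvature arguments, not only the $p$ outer derivative directions, produce a $-\tfrac1u$ under $\tnab_{\del_u}$, while the $\del_u$-slots produce none — and the $u$-independence of $A$ and $B$, which is exactly what makes $\del_u(u^{-q})$ the only source of radial derivatives. Finally, to be sure that \emph{every} covariant derivative of $\widetilde{\RR}$ is accounted for, one should remark that feeding $\del_u$ into a curvature slot yields nothing new: by \re{uvconeR}, \re{deluuR} and the Leibniz rule such contractions reduce to lower-order expressions of the types already listed.
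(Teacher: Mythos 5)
Your proposal is correct and follows exactly the route the paper intends: the paper offers no written proof beyond ``a straightforward computation shows'', relying on the previously established formulas \re{uvconenabla}, \re{uvconeR}, \re{commute}, \re{delupR} and \re{xpR}, and your induction on $q$ with base case \re{xpR} is precisely that computation carried out in detail. The bookkeeping is right — the $-\tfrac{p+3}{u}$ contribution from the tensor slots combined with $\partial_u(u^{-q})$ and $\tnab_{\del_u}A=\tfrac1u A$ yields the recursion $c(p,q+1)=-(p+q+2)c(p,q)$, consistent with \re{delupR} at $p=0$.
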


Our aim is to study the holonomy of metrics $\tg=2 \d u\d v+u^2g$. Since $\del_v$ is a parallel vector field on $(\tem,\tg)$, the holonomy of $(\tem,\tg)$ is contained in the stabiliser of the vector $\del_v$ at a point. By splitting  $T\tem=\rr\del_v\+ TM\+ \rr\del_u$, where
$\mathrm{span}\{ \del_v, \del_u \} = TM^\perp$, and fixing an orthonormal basis in $T_p M$ we can identify  $\so(T_pM,g)\simeq\so(t,s)$ and have $\hol(M,g)\subset \so(t,s)$. Hence, we
 can identify the stabiliser of $\del_v$ in $\so(t+1,s+1)$ with $\so(t+1,s+1)_{\del_v}=\so(t,s)\ltimes \rr^{t,s}$ and we get that 
\belabel{stabv}
\hol(\tem,\tg)\subset\so(t,s)\ltimes \rr^{t,s}
=\left\{ \left. \begin{pmatrix} 0& g(w,.) & 0 \\ 0& A & -w \\ 0&0&0\end{pmatrix} \; \right|  A\in \so(t,s), w\in \rr^{t,s}\right\},
\end{equation}
where the matrices are with respect to the splitting $T\tem=\rr\del_v\+TM\+\rr\del_u$ and the identification $T_pM = \rr^{t,s}$. 
With these identifications, there are two projections 
\[\pr_{\so(t,s)}:\hol(\tem,\tg)\to \so(t,s), \qquad \pr_{\rr^{t,s}}:\hol(\tem,\tg)\to \rr^{t,s},\] to the linear part $A$ and the translational part $w$ in (\ref{stabv}) of $\so(t+1,s+1)_{\del_v}=\so(t,s)\ltimes \rr^{t,s}$. Since derivatives of the curvature are contained in the holonomy algebra, \Cref{curvprop} implies that 
\belabel{projections}
 \begin{array}{rcl}
 \pr_{\so(t,s)}\left(
 \tnab_{\del_u}^q\tnab_{X_1}\cdots 
\tnab_{X_p}\widetilde{\RR})(Y,Z)
\right)
&=&
\frac{c}{u^q}(\nabla_{X_1}\cdots 
\nabla_{X_p}{\RR})(Y,Z)
\\
\pr_{\rr^{t,s}}
\left(
 \tnab^q_{\del_u}\tnab_{X_1}\cdots 
\tnab_{X_p}\widetilde{\RR})(Y,Z)
\right)
&=&
\frac{c}{u^{q-1}} \sum_{i=1}^p
(\nabla_{X_1}\stackrel{i\atop \uparrow}{\cdots }
\nabla_{X_p}\R)(Y,Z)X_i,
\end{array}\end{equation}
where $X_i,Y,Z\in T_pM$ and $c$ is a nonzero constant.

 A first description of the holonomy of $(\tem,\tg)$  was obtained in \cite{leistner05a}. This description is the first part of the following proposition.
\bs[{\cite[Theorem 4.2]{leistner05a}}] \label{myprop}
Let $\gg$ be a semi-Riemannian metric on $\M$  with holonomy algebra $\hol(\gg)$ and $\tg$ the metric 
$\tg=2\, \d u\, \d v +u^2\gg$ on $\rr^+\times \rr\times \M$. Then 
\[
\hol(\tg)\ \subset\  \hol(\gg)\ltimes\rr^{t,s}\ \subset\ \so(t,s)\ltimes\rr^{t,s}\ =\ \so(t+1,s+1)_{\del_v},\]
 and
 \[
\mathrm{pr}_{\so(t,s)}(\hol(\tg))\ =\ \hol(\gg).\]
Moreover, if $(M,g)$ admits a nonzero parallel vector field $X$, then 
\[\hol(\tg)\ \subset\  \hol(\gg)\ltimes X^\perp,\]
where $X^\perp\subset T_pM$ denotes the subspace orthogonal to $X_p$ with respect to $g$.
\es
\bprf
The proof of the first part of the proposition was given in \cite{leistner05a} and uses  equations~\re{uvconenabla} to  compute explicitly the parallel transport in $(\tem,\tg)$. Indeed, 
let  $\widetilde\gamma:[t_0,t_1]\to \tem$ be a piecewise smooth  curve given by $\widetilde\gamma(t)=\left(u(t), v(t) ,\gamma(t) \right) $ with a curve $\gamma$ in $\M$.  Let $Y(t)$ be a parallel vector field along $\gamma$ with respect to $\nabla$ and tangential to $\M$. Then one checks that
the vector field
\[ \widetilde Y(t) = \frac{1}{u(t)} Y(t) +    f(t) \cdot \del_v \]
is parallel with respect to $\tnab$ along $\widetilde\gamma$,  where
$f(t)= \int_{t_0}^{t} g_{\gamma(s)}\left( \dot \gamma(s),Y(s) \right) \d s$.
In particular,  the parallel transport of  $Z\in T_{(u(t_0),v(t_0),\gamma(t_0))}\M$ along $\widetilde\gamma$ is given by
\[\widetilde{\cal P}_{\widetilde\gamma} (Z)= \tfrac{1}{u(t_1)} \cal P_\gamma(Z)+  \left(\int_{t_0}^{t_1} \gg_{\gamma(t)}\left( \dot \gamma(t),\cal P_\gamma|_{[t_0,t]} (Z) \right) \d t \right)\del_v|_{\widetilde \gamma(t_1)}.\]
This implies that for a loop $\widetilde \gamma$ starting and ending at $(u,v,p)\in\tem$ we have that
\[\pr_{\so(t,s)}\left( \widetilde{\cal P}_{\widetilde\gamma}\right) = \tfrac{1}{u}\,\cal P_\gamma,\]
which shows that $\pr_{\so(t,s)}(\hol(\tg))=\hol(g)$.

For the second part, in the case where $(M,g)$ admits a parallel vector field $X$, the statement follows from the the Ambrose-Singer Holonomy Theorem and  the second equation in (\ref{projections}) as $(\nabla_{X_1}{\cdots }
\nabla_{X_p}\R)(Y,Z,X_i,X)=0$ for all $X_i\in TM$ if $X$ is parallel.\eprf
%
%
%

Note that this does {\em not} establish the inclusion $\hol(g)\subset\hol(\tg)$. 
Hence, for a metric of the form $\tg=2 \d u\d v+u^2g$ this result allows for the possibility that $\hol(\tg)$ is not completely determined by $\hol(g)$. Indeed, for the space of translations in $\hol(\tg)$,
\[T:=\hol(\tg)\cap \rr^{t,s}\] we have the following possibilities:
\bnum
\item $T=\rr^{t,s}$: 
In this case the holonomy of $\tg$ is completely determined by the holonomy of $g$ and we have $\hol(\tg)=\hol(g)\ltimes \rr^{t,s}$.
\item 
$T\not=\rr^{t,s}$: In this case we can distinguish two situations:
\bnum
\item $ \hol(g)\subset\hol(\tg)$: In this case there is a subspace of translations $T\subsetneq \rr^{t,s}$ such that 
$\hol(\tg)=\hol(g)\ltimes T$.
\item $\hol(g)\not\subset \hol(\tg)$.
\enum
\enum
In both cases in (2) it seems as if $\hol(g)$ does not determine $\hol(\tg)$ completely and that further knowledge about the geometry of $g$ is needed in order to decide whether (a) or (b) occur, to determine $T$, etc. In \Cref{algsec,holsec}  we will study these questions further, first purely algebraically and then using geometric properties of $\tg$. But first we will give some examples.

\subsection{Locally symmetric spaces and other examples}
\subsubsection{Locally symmetric spaces} Here we consider manifolds $(\tem,\tg)$ that arise via the construction (\ref{uvcone}) from semi-Riemannian locally symmetric spaces $(M,g)$. 
\begin{theorem} Let $(M,g)$ be a semi-Riemannian locally symmetric space, i.e., a semi-Riemannian manifold with $\nabla \R=0$. 
For $(M,g)$ we consider the metric 
$ \tg =2\d u\,\d v +u^2 \gg$  
on $\tem =  \rr^+\times  \rr\times \M$.
Then 
\[\hol_{\widetilde{p}}(\tem,\tg)=\hol_p(M,g)\ltimes T, \]
where $T= \hol_p(M,g) V$ with $V=T_pM$ and $\widetilde{p}=(1,0,p)\in \tem$. 
\end{theorem}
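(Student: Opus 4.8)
The plan is to use the Ambrose--Singer holonomy theorem together with the explicit curvature formulae in \Cref{curvprop}. Since $\nabla\R=0$, all higher covariant derivatives $\nabla_{X_1}\cdots\nabla_{X_p}\R$ vanish for $p\ge 1$, so the only surviving terms in \Cref{curvprop} are those with $p=0$ (no $\M$-directional derivatives), and the corresponding $\del_u$-derivatives. Concretely, \eeqref{delupR} gives that $(\tnab_{\del_u}^q\widetilde\R)(X,Y)Z = \frac{(-1)^q(q+1)!}{u^q}\R(X,Y)Z$ for all $X,Y,Z\in T\M$, while $\del_v$ and $\del_u$ always contract $\widetilde\R$ to zero. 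So the holonomy algebra $\hol_{\widetilde p}(\tem,\tg)$ is spanned by the endomorphisms $A_{X,Y}:=\widetilde\R(X,Y)$ together with all their $\tnab_{\del_u}$-derivatives, and by \eeqref{xpR} with $p=1$ the first $\del_u$-free $\M$-derivative $\tnab_{X_1}\widetilde\R$ already has $\so$-part zero (since $\nabla\R=0$) and $\rr^{t,s}$-part $\R(Y,Z)X_1$. Evaluated at $\widetilde p=(1,0,p)$, i.e. $u=1$, these curvature endomorphisms and their derivatives produce exactly the operators of \eeqref{stabv} whose linear part lies in $\Span\{\R(X,Y)\mid X,Y\in V\}$ and whose translational part lies in $\Span\{\R(Y,Z)X\mid X,Y,Z\in V\} = \hol_p(M,g)\cdot V$ (the latter equality holds because $\hol_p(M,g)$ is spanned by the $\R(X,Y)$, again using $\nabla\R=0$).

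The argument then splits into the two inclusions. For $\hol_{\widetilde p}(\tem,\tg)\subset \hol_p(M,g)\ltimes T$: by \Cref{myprop} we already know $\hol(\tg)\subset\hol(g)\ltimes\rr^{t,s}$ with $\pr_{\so(t,s)}(\hol(\tg))=\hol(g)$, so it remains to bound the translational part, i.e. to show $T=\hol(\tg)\cap\rr^{t,s}\subset \hol_p(M,g)V$. Using \eeqref{projections} with $\nabla\R=0$, every curvature-derivative contributing to $\hol(\tg)$ has $\rr^{t,s}$-part of the form $\frac{c}{u^{q-1}}\sum_i(\nabla_{X_1}\overset{i}{\cdots}\nabla_{X_p}\R)(Y,Z)X_i$, which is zero unless $p\le 1$, and for $p=1$ equals a multiple of $\R(Y,Z)X_1\in\hol_p(M,g)V$. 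Since the holonomy algebra is generated (as a Lie algebra) by these curvature terms and $\hol_p(M,g)V$ is $\hol_p(M,g)$-invariant (it is an $\hol_p(M,g)$-submodule of $\rr^{t,s}$ by definition), the Lie bracket of an element of $\hol_p(M,g)$ with an element of $\hol_p(M,g)V$ stays in $\hol_p(M,g)V$; brackets of two translations vanish. Hence the Lie algebra generated stays inside $\hol_p(M,g)\ltimes(\hol_p(M,g)V)$, giving the inclusion.

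For the reverse inclusion $\hol_p(M,g)\ltimes T\subset\hol_{\widetilde p}(\tem,\tg)$: the $\so(t,s)$-part is covered by $\pr_{\so(t,s)}(\hol(\tg))=\hol(g)$ from \Cref{myprop}, but one must be slightly careful, as that only gives the projection is surjective, not that $\hol(g)$ sits inside $\hol(\tg)$ as a subalgebra. The clean way is to exhibit enough elements directly: at $u=1$, the endomorphisms $\widetilde\R(X,Y)$ lie in $\hol_{\widetilde p}(\tem,\tg)$ and have the block form $\bmat 0 & g(\R(X,Y)\cdot,\cdot)^{?} & \ast\\ 0 & \R(X,Y) & \ast\\ 0&0&0\emat$; taking suitable combinations of $\widetilde\R(X,Y)$ and $\tnab_{\del_u}\widetilde\R(X,Y)$ (which have the \emph{same} $\so$-part $\R(X,Y)$ up to the scalar $-2$ but differ, so that a combination kills one block) isolates the pure linear part $\R(X,Y)\in\so(t,s)$ and, separately, shows the pure translations $\R(Y,Z)X\in\rr^{t,s}$ all occur (these come from the $\tnab_{X_1}\widetilde\R$ terms via \eeqref{xpR}, whose $\so$-part already vanishes). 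Since $\hol_p(M,g)=\Span\{\R(X,Y)\}$ and $T=\hol_p(M,g)V=\Span\{\R(Y,Z)X\}$, both factors are accounted for, and as $\hol_{\widetilde p}(\tem,\tg)$ is a subalgebra containing the linear parts and all translations in $T$, it contains the semidirect product $\hol_p(M,g)\ltimes T$.

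The main obstacle I expect is the bookkeeping in the reverse inclusion: \Cref{myprop} only gives surjectivity of $\pr_{\so(t,s)}$, so one genuinely needs the local-symmetry hypothesis to decouple the linear and translational blocks — in general an element of $\hol(\tg)$ projecting to $A\in\hol(g)$ carries an undetermined translational tail, and it is precisely $\nabla\R=0$ (which forces all $p\ge 2$ derivatives to vanish and makes the $p=0,1$ terms rigid) that lets one separate $\widetilde\R(X,Y)$ from $\tnab_{\del_u}\widetilde\R(X,Y)$ to extract the pure $\so(t,s)$-part, and guarantees $T$ is exactly $\hol_p(M,g)V$ rather than something larger. Verifying that the span of the listed curvature operators is already closed under the Lie bracket (so that no further iterated brackets enlarge it) is the one routine-but-delicate check; it follows because $[\R(X,Y),\R(Z,W)]$ lies in $\hol_p(M,g)$ and $[A,w]=Aw\in\hol_p(M,g)V$ for $A\in\hol_p(M,g)$, $w\in V$.
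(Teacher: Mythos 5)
Your proposal is correct and follows essentially the same route as the paper: use \Cref{curvprop} with $\nabla\R=0$ to see that the only nonzero curvature derivatives are the $\del_u$-derivatives of $\widetilde\R$ (purely linear, spanning $\hol_p(M,g)$) and the terms $\tnab_{\del_u}^q\tnab_{X_1}\widetilde\R$ (purely translational, spanning $\hol_p(M,g)V$), then invoke Ambrose--Singer. The only superfluous step is your block-separation trick in the reverse inclusion: since $\del_u\inter\widetilde\R=0$ by \eeqref{uvconeR}, the endomorphisms $\widetilde\R(X,Y)$ already have vanishing translational part, so $\hol_p(M,g)\subset\hol_{\widetilde p}(\tem,\tg)$ is immediate without combining with $\tnab_{\del_u}\widetilde\R(X,Y)$ (which at $u=1$ is just $-2\widetilde\R(X,Y)$ anyway).
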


\bprf 
As a consequence of the Ambrose-Singer theorem and $\nabla \R=0$ we have that 
\begin{equation}\label{symcurv}
\hol_p(M,g)=\mathrm{span}\{\R(X,Y)|_p\mid X,Y\in T_pM\}.\end{equation}
The curvature $\widetilde{\R}$ of $(\tem,\tg)$ satisfies 
\eeqref{uvconeR}, which, together with \eeqref{symcurv}, shows that $\g=\hol(M,g)$ is contained in $\tgg=\hol_p(\tem,\tg)$.  Moreover,  by  \Cref{curvprop} we have that 
\[
(\tnab_{\del_u} \cdots \tnab_{\del_u}\tnab_{X_1}\widetilde{\RR})(X,Y)Z
=
\frac{c}{u^{q-1}}
 \R(X,Y,Z,X_1)\del_v,
 \]
for  nonzero constant $c$ and $X,Y,Z,X_1\in TM$, and all other derivatives of $\widetilde{\R}$ are zero.  This implies the 
claim. 
\eprf 
\bfolg\label{symfolg}  Let $(M,g)$ be a semi-Riemannian locally symmetric space, which is locally the product of
(non-flat) irreducible symmetric spaces. Then 
\[ \hol_{\widetilde{p}}(\tem,\tg)=\hol_p(M,g)\ltimes  T_pM.\]
\efolg
\bbsp\label{symexample} The following example shows that \Cref{symfolg}  does not extend
to indecomposable symmetric spaces such as the Cahen-Wallach space of dimension $n=m+2$,
\[ (M,g)=\left(\rr^n, g_{CW}=2\d x\d z +  \sum_{i,j=1}^{m}\lambda_{ij}y^iy^j \, \d z^2+\sum_{i=1}^{m} (\d y^i)^2\right), \]
where $(x,y^1, \ldots , y^m, z)$ are global coordinates on $\rr^{m+2}$ 
and where $S=(\lambda_{ij})$ is a constant symmetric matrix with $\det(S)\not=0$. 
In this case we have $\hol (M,g)=\rr^{m}\subset \so(1,m+1)_{\del_x}=\so (m) \ltimes \rr^{m}$ and  $T=\mathrm{span}(\del_x,\del_1\ldots, \del_{m})$ where $\del_i=\frac{\partial}{\partial y^i}$. We will explain these Lie algebras in more detail later on.
\ebsp

\subsubsection{pp-waves and plane waves} The pp-waves are Lorentzian manifolds that are generalisations of Cahen-Wallach spaces. Again we consider $M=\rr^n=\rr^{m+2}$ with global coordinates $(x,y^1, \ldots, y^m,z)$ and $f$ a function $f=f(y^1, \dots, y^m,z)$ of $ y^1, \dots, y^m$ and $z$ but not of $x$. Then a general pp-wave metric on $\rr^{m+2}$  is given by
\begin{equation}\label{ppmetric}
g=2\d x\d z+ 2 f(y^1, \dots, y^m,z) \d z^2+\sum_{i=1}^m(\d y^i)^2.\end{equation}
The  Levi-Civita connection and the curvature are  determined  by
\[
\nabla\partial_x=0,\quad \nabla_{\partial_i}\partial_j=0,\quad 
\nabla_{\partial_z}\partial_i=\partial_if \del_x,
\quad
\nabla_{\partial_z}\partial_z=\partial_z f \del_x- \sum_{i=1}^m\partial_if\, \del_i,
\]
and 
\[
\del_x\inter R=0,\quad R(\del_i,\del_j)=0,\quad
R(\del_i,\del_z,\del_z,\del_j)=-\del_i\del_j f.\]
In the basis 
$(\del_x,\del_1,\ldots, \del_m,\del_z-f\del_x)$
the metric is given by
\[\eta=\begin{pmatrix}0&0&1\\0&\1_m&0\\1&0&0\end{pmatrix}\]
and we can write  the curvature and its derivatives as endomorphisms in  $\so(\eta)$  as 
\begin{equation}\label{ppcurv}
(\nabla_{X_1} \ldots \nabla_{X_p}R)(\del_i,\del_z)=
\begin{pmatrix}
0&\left( X_1 \ldots X_p\del_i\del_j (f) \right)_{j=1}^m&0
\\
0&0&
-\left( X_1 \ldots X_p\del_i\del_j (f) \right)_{j=1}^m
\\
0&0&0
\end{pmatrix},
\end{equation}
where the $X_i$ are constant vector fields on $M=\mathbb{R}^n$. As for Cahen-Wallach spaces, their holonomy algebra contained in (and equal to, if the Hessian $\partial_i\partial_j f$ of $f$ is invertible) $\rr^m\subset \so(1,m+1)_{\del_x}$ and hence abelian.

Now we consider the semi-Riemannian manifold  $(\tem,\tg)$ of signature $(2,m+2)$ for a given pp-wave $(M,g)$ of dimension $n=m+2$. Then, by setting
\[A_{qrk_1\ldots k_si}:= (\tnab_{\del_u}^q
 \tnab_{\del_z}^r\tnab_{\del_{k_1}}\cdots 
\tnab_{\del_{k_s}}\widetilde{\RR})(\del_i,\del_z),\]
equations (\ref{projections}) in this case are
\belabel{projections1}
 \begin{array}{rcl}
 \pr_{\so(1,n-1)}\left(
A_{qrk_1\ldots k_si}
\right)
&=&
\tfrac{c}{u^q} \begin{pmatrix}
0&\left( \del_{k_1} \ldots \del_{k_s}\del_i\del_j \ f^{(r)} \right)_{j=1}^m&0
\\
0&0&
\vdots
\\
0&0&0
\end{pmatrix},
\\[2mm]
\pr_{\rr^{1,n-1}}
\left(
A_{qrk_1\ldots k_si}
\right)
&=&
\tfrac{c }{u^{q-1}} \Big( 
s
\del_{k_1}\cdots 
\del_{k_s}\del_i f^{(r)}
\del_x
+
\sum\limits_{j=1}^m
\del_{k_1}
\cdots 
\del_{k_s}\del_i\del_{j}f^{(r-1)}
\del_j
\Big).
\end{array}\end{equation}
where $f^{(r)}$ denotes the $r$-th derivative of $f$ with respect to the coordinate $z$.
This shows that $\hol(\tem,\tg)\subset \hol(M,g)\ltimes \del_x^\perp$, with $\del^\perp=\Span(\del_x,\del_1, \ldots ,\del_m)$,  as claimed in \Cref{myprop}. In general these projections could be coupled to each other, but for a special case we can say more:
\bs
Let $(M,g)$ be a pp-wave as in (\ref{ppmetric}) but with the condition that $f$ does not depend on $z$, i.e., $f=f(y^1,\ldots, y^n)$ and such that $\det(\del_i\del_j f)\not=0$ at one point 
(or, more generally, such that at one point 
\begin{equation} \label{cond:eq} \mathrm{span}\{\d (\partial_{k_1}..\partial_{k_p}f) \mid p\ge 1,\;  k_1, \ldots ,k_p\in \underline{m} \} = (\mathbb{R}^m)^*,\end{equation}
where 
$\underline{m} = \{ 1,\ldots ,m\}$). Then 
\[\hol(\tem,\tg)=\hol(M,g)\ltimes \del_x^\perp.
\]
\es
\bprf
We evaluate formulae (\ref{projections}) for $r=1$: since $f$ is independent of $z$, we have $f'=0$ and hence 
\[ \pr_{\so(1,m+1)}\left(
(
 \tnab_{\del_z}\tnab_{\del_{k_1}}\cdots 
\tnab_{\del_{k_p}}\widetilde{\RR})(\del_i,\del_z)
\right)=0,\]
 and 
 \[
 \pr_{\rr^{1,m+1}}
\left((
 \tnab_{\del_z}\tnab_{\del_{k_1}}\cdots 
\tnab_{\del_{k_p}}\widetilde{\RR})(\del_i,\del_z)
\right)
=
\sum_{j=1}^m
\del_{k_1}
\cdots 
\del_{k_p}\del_i\del_{j}f
\del_j
.\]
If $\det(\del_i\del_jf)\not=0$ (or if \re{cond:eq} holds at one point), this shows that $\Span(\del_1\ldots,\del_m) \subset \hol(\tem,\tg)\cap \rr^{1,m+1}$. This space however is not invariant under $\hol(M,g)$ and is mapped under the adjoint representation in $\hol(\tem,\tg) $ to $\rr\del_x$, so that $\hol(\tem,\tg)=\hol(M,g)\ltimes \del_x^\perp$.
\eprf
This proposition can be clearly generalised to functions $f$ that are  polynomial, say of degree $d$, in $z$ (and have arbitrary dependence on the $y^i$). It suffices to replace $r=1$ in the proof with $r=d+1$ and the condition on $f$ by the corresponding condition on $f^{(d)}$. It does not hold however for general $f$ as the following example shows:
\bbsp
Let $f(y,z)=\mathrm{e}^z y^2$ and $g$ a plane wave metric\footnote{Plane waves are pp-waves for which the function $f$ is a quadratic polynomial in the $y^i$'s  with $z$-dependent coefficients, i.e., 
$f(y^1,\ldots y^m,z)=\sum_{i,j=1}^m f_{ij}(z) y^iy^j$,  with $(f_{ij}(z))$ a symmetric matrix of functions of $z\in \rr$.}
 on $\rr^3$ defined by $f$,
\[
g=2\d x\d z+ \mathrm{e}^z y^2\d z^2+\d y^2.
\]
Its curvature and derivatives thereof are given by \eeqref{ppcurv} as follows
\[
\nabla_{\del_y}R=0,\qquad
(\nabla_{\del_z}^{(r)}R)(\del_y,\del_z)=
2 \begin{pmatrix}
0& \mathrm{e}^z&0
\\
0&0&
-\mathrm{e}^z
\\
0&0&0
\end{pmatrix}=: A(z),
\]
for all $r\ge 0$. Its holonomy algebra is one-dimensional. When we now consider the metric $\tg$,  formula (\ref{projections1}) shows that
\begin{eqnarray*}
(\tnab_{\del_u}^q
 \tnab_{\del_z}^r\widetilde{\RR})(\del_y,\del_z)
& =&
\tfrac{c}{u^q} \begin{pmatrix}
0&{ 2}u\,\mathrm{e}^z dy&0
\\
0&A(z)&-{ 2}u\,\mathrm{e}^z\del_y
\\
0&0&0
\end{pmatrix}, 
\\
(\tnab_{\del_u}^q
 \tnab_{\del_z}^r \tnab_{\del_y}\widetilde{\RR})(\del_y,\del_z)
 &=&
 \tfrac{{ 2}c }{u^{q-1}}\mathrm{e}^z \del_x, 
\end{eqnarray*}
with all other derivatives of the curvature being zero.
 Since $\tg$ is analytic, its holonomy  is determined by the derivatives of the curvature at a point, say at $v=x=y=z=0$ and $u=1$, and is spanned by the  two matrices
arising from $(\tnab_{\del_u}^q
 \tnab_{\del_z}^r\widetilde{\RR})(\del_y,\del_z)$ and $(\tnab_{\del_u}^q
 \tnab_{\del_z}^r \tnab_{\del_y}\widetilde{\RR})(\del_y,\del_z)$,
 \[
 {\small
 \begin{pmatrix}
 0&0&1&0&0
 \\
 0&0&1&0&0
 \\
  0&0&0&-1&-1 
 \\
  0&0&0&0&0
 \\
  0&0&0&0&0
\end{pmatrix},\qquad
\begin{pmatrix}
 0&0&0&1&0
 \\
 0&0&0&0&-1
 \\
  0&0&0&0&0 
 \\
  0&0&0&0&0
 \\
  0&0&0&0&0
\end{pmatrix}.}
\]
This shows that the holonomy of $\tg $ is abelian and is neither purely translational nor a semidirect sum of $\hol(g)$ with 
a Lie algebra of translations.
\ebsp
%
%
%
%
%

\subsection{Lift of parallel objects} In this section we analyse how parallel objects on $(M,g)$, such as vector fields and vector distributions, lift to $(\tem,\tg)$.
First we analyse how certain vector fields on $\M$ lift to $\tem$.
\blem\label{paralemma}
Let $\xi$ be a homothetic gradient vector field on $(\M,\gg)$, i.e., a vector field  with \belabel{kill}
\nabla \xi= a\ \mathrm{Id}\end{equation} for a constant $a\in \rr$ and such  that $\xi^\flat$ is not only closed but exact, $\xi^\flat=df$ for a smooth function $f$.
Then the vector field $\widetilde \xi$ defined by
\[\widetilde \xi =  f\,\del_v+\frac{1}{u} \xi  -a \del_u,\]
is parallel for $\tnab$. In particular, if $\xi$ is parallel for $(M,g)$, then $\widetilde \xi =  f\,\del_v+\frac{1}{u}\xi$ is parallel for $(\tem, \tg)$.
\elem
\bprf
First note that the condition \re{kill} implies that $\xi^\flat$ is closed, i.e., locally we can always  find a function $f$ such that $\xi^\flat=df$. 
Then we compute
\[
\tnab_{\del_u}\widetilde \xi=-\frac{1}{u^2} \xi +\frac{1}{u}\tnab_{\del_u} \xi =0,\]
because of \re{uvconenabla}. Moreover, we have for every $X\in TM$ that
\[
\tnab_{X}\widetilde \xi=df(X)\del_v +\frac{a}{u}X - \gg(\xi,X)\del_v - a\tnab_X\del_u =0,
\]
again by \re{uvconenabla} and $df=\xi^\flat$.
\eprf
In a similar way we can prove: 
\blem
\label{paralemma1}
Let $\L$ be a parallel null line bundle on $(\M,\gg)$.
Then the totally null $2$-plane bundle  $\P$ on $(\tem,\tg)$ spanned  by $\del_v$ and $\L$ is parallel for $\tnab$.
\elem
\bprf This follows from applying  \eeqref{uvconenabla} to a recurrent null vector field $\xi$ spanning $\L$ and  $\del_v$ being parallel for $\tnab$.\eprf
%
The following proposition will be used in \Cref{holsec} for the proof of \Cref{holtheointro}:
\begin{proposition}\label{recprop}
Let $(M,g)$ be a manifold with parallel null line bundle $\L$. Assume that the metric $\tg=2 \d u\d v+u^2g$ admits a recurrent vector field in the span of $\del_v$ and $\L$ that is not a multiple of $\del_v$. Then locally $g$ admits a parallel null vector field  in $\L$.
\end{proposition}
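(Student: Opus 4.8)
The plan is to reduce the assertion to the flatness of the parallel null line bundle $\L$ on $(M,g)$. Working locally, I would fix a nowhere-vanishing recurrent section $\chi$ of $\L$, so $\nabla\chi=\alpha\otimes\chi$ for a $1$-form $\alpha$ on $M$; recall that then $\R(X,Y)\chi=\d\alpha(X,Y)\chi$, i.e.\ $\d\alpha$ is the curvature of $\L$. If $\d\alpha=0$, then $\alpha$ is locally exact, $\alpha=\d f$, and $\e^{-f}\chi$ is a parallel null vector field spanning $\L$, exactly as in the discussion following \eeqref{recur}. So it is enough to show that the hypothesis forces $\d\alpha=0$.

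Passing to connected pieces I may assume $\tem$ connected, and I would write the given recurrent vector field as $\eta=a\,\del_v+b\,\chi$ for functions $a,b$ on $\tem$, with $\chi$ pulled back from $M$. Since $\eta$ is recurrent it vanishes either nowhere or identically; as it is not a multiple of $\del_v$, it is nowhere zero, so $\langle\eta\rangle$ is a $\tnab$-parallel line subbundle of the parallel null $2$-plane field $\P=\langle\del_v\rangle\oplus\L$ of \Cref{paralemma1}. Because $\langle\del_v\rangle$ is $\tnab$-parallel too, parallel transport propagates any zero of $b$ (i.e.\ any point with $\langle\eta\rangle=\langle\del_v\rangle$) to all of $\tem$; as $\eta$ is not a multiple of $\del_v$, this forces $b$ to be nowhere zero.

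Let $\gamma$ be the $1$-form with $\tnab\eta=\gamma\otimes\eta$, so $\widetilde\R(W,W')\,\eta=\d\gamma(W,W')\,\eta$ for all vector fields $W,W'$. On the other hand $\widetilde\R$ annihilates $\del_v$, and by \eeqref{uvconeR} together with $\nabla\chi=\alpha\otimes\chi$ one has $\widetilde\R(W,W')\chi=\d\alpha(W_0,W_0')\,\chi$, where $W_0$ is the $TM$-part of $W$ in the splitting $T\tem=\langle\del_u\rangle\oplus\langle\del_v\rangle\oplus TM$. Hence
\[\d\gamma(W,W')\,\bigl(a\,\del_v+b\,\chi\bigr)=\widetilde\R(W,W')\,\eta=b\,\d\alpha(W_0,W_0')\,\chi .\]
Comparing $\chi$-components gives $\d\gamma(W,W')=\d\alpha(W_0,W_0')$ (using $b\neq 0$), and feeding this into the $\del_v$-components gives $a\,\d\alpha(W_0,W_0')=0$ for all $W,W'$; i.e.\ $a$ times the pullback of $\d\alpha$ to $\tem$ vanishes identically.

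It remains to see that $a$ does not vanish on any nonempty open set $\mathcal V\subset\tem$: on such a set $\eta=b\,\chi$ would be recurrent, but \eeqref{uvconenabla} shows that the $\del_v$-component of $\tnab_W(b\chi)$ is $-u\,b\,g(W_0,\chi)$, which would then have to vanish for all $W$, forcing $g(\cdot,\chi)\equiv 0$ and hence $\chi\equiv 0$ on $\mathcal V$ --- a contradiction. Thus $\{a\neq 0\}$ is dense in $\tem$, and since $\d\alpha$ is pulled back from $M$ this gives $\d\alpha=0$ on $M$, completing the argument. I expect the only genuinely delicate points to be these two rigidity steps --- that $b$ is nowhere zero and that $a$ does not vanish on an open set --- which rest on the parallel-transport rigidity of line subbundles of the parallel plane $\P$ and on the non-degeneracy of $g$; everything else is a short tensorial computation.
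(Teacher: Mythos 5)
Your argument is correct, but it reaches the conclusion by a different route than the paper. The paper first normalises the section of $\L$ to a recurrent \emph{gradient} field $\xi=\grad (f)$ with $\nabla\xi=\theta\otimes\xi$ (via \Cref{recurlemma}), asserts ``without loss of generality'' that the recurrent field can be written as $\zeta=f\del_v+\tfrac1u\xi+h\del_v$, and then works at the level of the connection: comparing $\tnab\zeta=\alpha\otimes\zeta$ with the explicit covariant derivatives yields $\alpha=\theta$ and $\d h=(f+h)\theta$, whose exterior derivative gives $(f+h)\,\d\theta=0$; ruling out $h=-f$ then forces $\d\theta=0$, whence the rescaling to a parallel section. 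You instead keep an arbitrary recurrent section $\chi$ of $\L$ and pass directly to curvature, applying the identity $\R(\cdot,\cdot)\eta=\d\gamma(\cdot,\cdot)\,\eta$ for recurrent fields both on $(M,g)$ and on $(\tem,\tg)$, together with $\del_u\inter\widetilde{\RR}=\del_v\inter\widetilde{\RR}=0$ and \re{uvconeR}; comparing components in the frame $(\del_v,\chi)$ of $\P$ gives $a\,\d\alpha=0$ in one step. What your version buys: it dispenses with \Cref{recurlemma} entirely, and it makes explicit the two nondegeneracy points that the paper's ``without loss of generality'' passes over --- that the $\L$-component of the recurrent field is nowhere zero (rigidity of coinciding parallel line subbundles of $\P$) and that its $\del_v$-component cannot vanish on an open set (nondegeneracy of $g$, via the $\del_v$-component $-u\,b\,g(W_0,\chi)$ of $\tnab_W(b\chi)$). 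What the paper's version buys: it stays at first order in the connection and identifies the recurrence form of $\zeta$ with $\theta$ along the way. Both proofs terminate at the same place --- the curvature $\d\alpha$ of $\L$ vanishes, so a local section can be rescaled to a parallel null vector field in $\L$.
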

\begin{proof}
By \Cref{recurlemma} we can assume that $\L$ is spanned by a recurrent {\em gradient} vector field $\xi=\grad ( f)$, i.e., with $\xi^\flat=df$ and $\nabla \xi=\theta\otimes \xi$ with $\theta$ a multiple of $\xi^\flat$.
Then the vector field
\[\widetilde \xi=f\del_v+\frac{1}{u}\xi\]
satisfies \begin{eqnarray}\label{recurrent1}
\tnab_{\del_u}\widetilde \xi&=&0,
\\
\label{recurrent2}
\tnab_X\widetilde \xi
&=&
 \frac{1}{u}\theta(X)\xi\ =\ \theta(X)(\widetilde{\xi} - f\del_v),\quad \text{ for  }X\in T\M.\end{eqnarray}
Without loss of generality,  the assumption implies that $\tg$ admits a recurrent vector field of the form $\zeta=\widetilde{\xi}+h \del_v$ for a function $h$. It defines  a one-form $\alpha$ by $\tnab \zeta=\alpha\otimes \zeta$.
Then the fact that $\del_v$ is parallel and equation (\ref{recurrent1}) immediately show that
\[\del_uh=\alpha(\del_u)=\del_vh=\alpha(\del_v)=0.\]
Equation (\ref{recurrent2}) implies that
\[\tnab_X\zeta = \theta(X)\widetilde{\xi} +(dh(X)-f \theta(X))\del_v.\]
Hence the equation $\tnab\zeta =\alpha\otimes \zeta$ implies that $\alpha=\theta$ and 
\[\d h=(f+h)\theta.\]
Differentiating  this and taking into account that $\d f\wedge \theta = \d h\wedge \theta=0$ gives
\[
0
=
(f+h)\d \theta
\]
If $\d\theta\not=0$ this implies $h=-f$. This  contradicts the above $\d h=(f+h)\theta$, as it would imply that $h$ and hence $f$ are constant.
So we must have $\d\theta=0$. This however implies that one can rescale $\xi$ to a parallel null vector field.
\eprf

Finally, for parallel distributions of $(\M,\gg)$ we get
\blem
Let $W\subset T \M$ be a parallel distribution on $(\M,\gg)$. Then the distribution $\rr\partial_v\+ W\subset T\tem$ is parallel.
\elem
\bprf
The distribution $W$ is locally spanned by vector fields $W_1,\ldots , W_k$.
Then one checks that for $\widetilde W_i:=\del_v +\frac{1}{u}W_i$ we have
$\tnab_{\partial_u}\widetilde W_i=0$ and 
\[\tnab_{X}\widetilde W_i=-\gg(X,W_i)\del_v+{\frac{1}{u}}\nabla_XW_i\in \rr \partial_v\+W,\]
for all $X\in TM$.
\eprf

%
%
%
\section{Results about indecomposable subalgebras of $\so(t+1,s+1)$}
\label{algsec}
In this section we will prove several algebraic results about indecomposable subalgebras of $\so(t+1,s+1)$ stabilising a null line or a null vector. We will use these results in the next section when studying further  the holonomy of metrics of the form $\tg=2 \d u\d v+u^2 g$.
\subsection{Indecomposable subalgebras stabilising a null vector}\label{alg1sec}
In this section we will fix some notations and observe some fundamental facts about indecomposable subalgebras of $\so(t+1,s+1)$ stabilising   a null vector. In particular, in this section we will see why the vector space $Z^1(\g,V)$ of $1$-cocycles  of a Lie algebra $\g$ with values in a $\g$-module $V$ comes into play. Recall that 
\begin{equation}
\label{cycle}
Z^1(\g,V) :=\{\vf:\g^*\otimes  V \mid \vf([X,Y])=X\vf(Y)-Y\vf(X)\text{ for all }X,Y\in \g\}\end{equation}
and 
\[ H^1(\g , V) := \frac{Z^1(\g,V)}{\d V},\]
where
\[ \d : V \rightarrow Z^1(\g , V), \quad \d v (X) :=  Xv,\quad v\in V,\quad X\in \g .\]

Let $\widetilde V$ be a semi-Euclidean vector space of signature $(t+1,s+1)$ with metric $\widetilde g$ and let $\ev_{\pm}$ be two null vectors such that $\widetilde g(\ev_-,\ev_+)=1$. We split $\widetilde V=L_-\+ V\+ L_+$ with $L_\pm=\rr\cdot \ev_\pm$ and $V=(L_-\+L_+)^\perp$ which is equipped with the metric $g=\widetilde g|_{V\times V} $. With respect to this splitting the stabiliser of $L_-$ in $\so(\widetilde V)$, denoted by 
$\so(\widetilde V)_{L_-}$ is given as
\begin{eqnarray*} \so(\widetilde V)_{L_-}&=&(\rr\+\so(V))\ltimes V\\
&:=&
\left\{
\left. 
(a,X,v)
:=
\begin{pmatrix} a & -v^\flat &0 
\\0&X &v 
\\ 0&0&-a\end{pmatrix}
\ \right|\ 
a\in \rr, X\in \so(V), v\in V\right\} .\end{eqnarray*}
The action of $\so(\widetilde V)_{L_-}$ on $\widetilde V=L_-\+V\+L_+  \cong \mathbb{R} \+ V \+ \mathbb{R}$ is given by
\belabel{act}
(a,X,v)\cdot \begin{pmatrix}r\\ u\\ s\end{pmatrix}= \begin{pmatrix}ar- g(v,u)\\ Xu+sv\\ -as
\end{pmatrix}
.\end{equation}
Furthermore we record the formula for the Lie bracket in $\so(\widetilde{V})_{L_-}$:
\belabel{bracket}
\left[ (a,X,v),(b,Y,w)\right]
=
\left(
0, [X,Y], (X+a)w-(Y+b)v
\right).
\end{equation}

The stabiliser of the {\em vector} $\ev_-$ is given as $\so(\widetilde V)_{\ev_-}=\so(V)\ltimes V$, i.e., is obtained by requiring  $a$ to be zero in the above formulae. 
Note that, 
the adjoint  action of $\so(\widetilde V)_{\ev_-}=\so(V)\ltimes V$ preserves  the ideal $V$, whereas the linear action on $\widetilde V$ does not preserve the subspace $V\subset \widetilde{V}$.

Furthermore  note that there are natural projections
$\pr_V$ and $\pr_{\so(V)}$ on $V$ and $\so(V)$. For 
a subalgebra $\wt\g\subset \so(V)\ltimes V$
we call $\g:=\pr_{\so(V)}(\widetilde{\g})$ the {\em linear part of $\widetilde{\g}$} and 
 $T:=\wt\g\cap V$ the {\em translations in $\widetilde{\g}$}. Note that   
$\widetilde \g \subset \g\ltimes V$ but in general $\g \not\subset \widetilde{\g}$. 
\begin{proposition}
\label{algprop}
Let 
$\widetilde \g\subset \so(\widetilde V)_{\ev_-}=\so(V)\ltimes V$ be a subalgebra, 
$\g$ its linear part  and 
$T$
the translations in $\widetilde \g$.
Then
\begin{enumerate}
\item\label{ideal} $T$ is an ideal in $\widetilde \g$.
\item\label{inv} $T\subset V$ is invariant under $\g$, and consequently $\g$ acts on $V/T$.
\item\label{quot} We have an inclusion of Lie algebras $\widetilde \g/T\subset\g\ltimes V/T$.
\item\label{z1} There is a $ \vf\in Z^1(\g, V/T)$ such that  $\widetilde \g/T =\{ (X, \vf(X))\mid X\in \g\}$.
\item\label{z1comp} If $T$ has a $\g$-invariant complement $T'$, then there is a $\vf\in Z^1(\g,T^\prime)$,  such that
\[
\widetilde \g=\h_\vf\ltimes T,\quad \text{ where }\h_\vf=\{(X,\vf(X))\in \widetilde \g\mid X\in \g\}
\]
\enum
\end{proposition}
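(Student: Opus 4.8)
The plan is to prove the five assertions of Proposition~\ref{algprop} in the order stated, each one feeding into the next. Throughout I write elements of $\so(\widetilde V)_{\ev_-}=\so(V)\ltimes V$ as pairs $(X,v)$ with $X\in\so(V)$, $v\in V$, and I use the bracket formula \re{bracket} specialised to $a=b=0$, namely $[(X,v),(Y,w)]=([X,Y],Xw-Yv)$.

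For \eqref{ideal}, I would take $(X,v)\in\widetilde\g$ arbitrary and $(0,w)\in T$ (so $w\in V$ with $(0,w)\in\widetilde\g$) and compute $[(X,v),(0,w)]=([X,0],Xw-0)=(0,Xw)$, which lies in $\widetilde\g$ and has zero linear part, hence lies in $T$; so $T$ is an ideal. For \eqref{inv}, the same computation shows that for $X\in\g$ — pick any lift $(X,v)\in\widetilde\g$ — and any $w$ with $(0,w)\in T$ we have $Xw\in T$, i.e.\ $T$ is $\g$-invariant as a subspace of $V$; hence the $\g$-action on $V$ descends to $V/T$. For \eqref{quot}, I would observe that the map $\widetilde\g\to\g\ltimes(V/T)$, $(X,v)\mapsto(X,v+T)$, is a Lie algebra homomorphism (it is the composition of the inclusion $\widetilde\g\hookrightarrow\g\ltimes V$ with the quotient map induced by $V\twoheadrightarrow V/T$, and both are Lie homomorphisms by \eqref{inv}) whose kernel is exactly $\{(0,v)\mid v\in T\}=T$; so it induces an injection $\widetilde\g/T\hookrightarrow\g\ltimes(V/T)$.

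For \eqref{z1}, I would first note that the injection of \eqref{quot} realises $\widetilde\g/T$ as a subalgebra of $\g\ltimes(V/T)$ which projects \emph{isomorphically} onto $\g$ under $\pr_{\so(V)}$: surjectivity holds because $\g=\pr_{\so(V)}(\widetilde\g)$ by definition, and injectivity because an element of the image with zero linear part is $(0,v+T)$ with $(0,v)\in\widetilde\g$, hence $v\in T$, hence it is zero in $\widetilde\g/T$. Therefore $\widetilde\g/T$ is the graph of a linear map $\vf:\g\to V/T$, i.e.\ $\widetilde\g/T=\{(X,\vf(X))\mid X\in\g\}$. That this subset is closed under the bracket of $\g\ltimes(V/T)$ translates, via $[(X,\vf(X)),(Y,\vf(Y))]=([X,Y],X\vf(Y)-Y\vf(X))$, into the requirement $\vf([X,Y])=X\vf(Y)-Y\vf(X)$ for all $X,Y\in\g$, which is precisely the cocycle condition \re{cycle}; so $\vf\in Z^1(\g,V/T)$.

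For \eqref{z1comp}, suppose $T$ has a $\g$-invariant complement $T'$, so $V=T\oplus T'$ as $\g$-modules. Composing the cocycle $\vf$ of \eqref{z1} with the $\g$-equivariant linear isomorphism $V/T\xrightarrow{\ \sim\ }T'$ (inverse of $T'\hookrightarrow V\twoheadrightarrow V/T$) gives a cocycle, still called $\vf$, in $Z^1(\g,T')$; equivariance of this isomorphism ensures the cocycle condition is preserved. I then set $\h_\vf=\{(X,\vf(X))\mid X\in\g\}\subset\so(V)\ltimes V$; the bracket computation above, together with $\vf\in Z^1(\g,T')$ and the fact that $T'$ is a $\g$-submodule of $V$, shows $\h_\vf$ is a subalgebra. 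I would then check $\h_\vf\subset\widetilde\g$: given $X\in\g$, pick a lift $(X,v)\in\widetilde\g$; by \eqref{z1} the class of $v$ modulo $T$ equals $\vf(X)$ viewed in $V/T$, which under our identification means $v-\vf(X)\in T$, whence $(0,v-\vf(X))\in T\subset\widetilde\g$ and so $(X,\vf(X))=(X,v)-(0,v-\vf(X))\in\widetilde\g$. Finally $\h_\vf\cap T=0$ (an element of $\h_\vf$ with zero linear part is $(0,\vf(0))=(0,0)$), and $\widetilde\g=\h_\vf+T$ because any $(X,v)\in\widetilde\g$ equals $(X,\vf(X))+(0,v-\vf(X))$ with the first summand in $\h_\vf$ and the second, as just shown, in $T$; since $T$ is an ideal by \eqref{ideal}, this exhibits $\widetilde\g=\h_\vf\ltimes T$ as claimed.

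I do not anticipate a serious obstacle here — the proposition is essentially the standard ``graph of a cocycle'' description of subalgebras of a semidirect product lying over a fixed subalgebra of the Levi part. The one point requiring a little care is the bookkeeping in \eqref{z1comp}: one must keep straight the identification $V/T\cong T'$ and verify it is $\g$-equivariant (this is where $\g$-invariance of $T'$, not merely vector-space complementarity, is used), and one must confirm that the splitting $\widetilde\g=\h_\vf\ltimes T$ is a semidirect-product decomposition of Lie algebras and not just of vector spaces, which follows from $T$ being an ideal.
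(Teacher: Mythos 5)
Your proof is correct and follows essentially the same route as the paper: items (1)--(3) by the bracket formula \re{bracket}, item (4) by defining $\vf$ via lifts (well-defined modulo $T$) and reading off the cocycle condition from closure under the bracket, and item (5) via the $\g$-equivariant identification $V/T\cong T'$. The paper merely states these steps as obvious or easy, whereas you have supplied the details, including the points (the equivariance of $V/T\cong T'$ and the fact that $\h_\vf\subset\widetilde\g$) that genuinely need checking.
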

\bprf
\Cref{ideal,inv,quot} are obvious from the definitions. For \Cref{z1} we define $\vf(X)=v\mod T$ if $(X,v)\in \widetilde{\g}$. Since $(X,v)\in \wt\g$ and $(X,w)\in \wt\g$ implies that $v-w\in T$, this map is well defined. From \eeqref{bracket} we see that ${\vf}$ is an element in $Z^1(\g,V/T)$. Finally, \Cref{z1comp} follows easily from \Cref{z1}
using the identification $V/T=T'$ as $\g$-modules. 
\eprf

\btheo
\label{algtheo1}
Let 
$\widetilde \g\subset \so(\widetilde V)_{\ev_-}=\so(V)\ltimes V$ be a subalgebra acting indecomposably on  $\wt V$. Let
$\g\subset  \so(V)$ and $T\subset V$ be respectively the linear part and translational ideal of $\widetilde{\g}$. 
\bnum
\item 
If 
$T$ has a $\g$-invariant complement $T^\prime$ and 
$H^1(\g,V)=0$, then, up to conjugation in $\so(V)\ltimes V$, $\wt\g =\g\ltimes T$ and   $T^\perp$ is degenerate or zero. 
In particular, if $T$ is nondegenerate and $H^1(\g,V)=0$, then $T=V$. 
\item If $T$ is degenerate such that  $L=T\cap T^\perp$ is a null line (this is the case for example when $T$ is degenerate and $g$  Lorentzian) and if the representation of $\g$ on $V/L^\perp$ satisfies that 
  $H^1(\g,V/L^\perp)=0$, then $\g$ acts trivially on $L$ or, up to conjugation in $\so(V)\ltimes V$, $\wt\g$ preserves  $L$.
\enum
\etheo
\bprf
(1)
First assume $V=T\+T^\prime$ is a $\g$-invariant decomposition. In virtue of \Cref{algprop}, $\widetilde\g=\h_\vf\ltimes T$, for 
some $\varphi \in Z^1(\g, T')$.  Since $Z^1(\g,V)=\d V$ and $Z^1(\g,T^\prime)\subset Z^1(\g,V)$, we find a $v\in V$ such that 
\[\vf(X)=Xv,\]
for all $X\in \g$. Then every element  $(X,\vf(X))=(X,Xv)\in \h_\vf$ can be conjugated to $X$ by a conjugation with the translation given by $v$, i.e., with
\belabel{Aconj}
A_v=\begin{pmatrix}
1 & -v^\flat&-\frac{1}{2} g(v, v)
\\
0&\1& v
\\
0&0&1
\end{pmatrix}.
\end{equation}
Indeed, for each $X\in \g$ we get
\[
A_v 
\begin{pmatrix} 0 & - (X v)^\flat &0 
\\0&X &Xv 
\\ 0&0&0\end{pmatrix}
A_v^{-1}
=\begin{pmatrix} 0 & 0 & 0
\\0&X &Xv
\\ 0&0&0\end{pmatrix}A_{-v}
=\begin{pmatrix} 0 & 0 & 0
\\0&X &0 
\\ 0&0&0\end{pmatrix},
\]
using that $X\in \so(V)$.
This shows that after conjugation with a translation, we have that $\g\subset \wt\g$. 
Hence $\wt\g=\g\ltimes T$, where $T=\wt\g\cap V$. Note that this already implies that $T$ is nonzero, because
otherwise $\wt\g=\g \subset \so (V)$, which contradicts indecomposability. 
Since $T$ is $\g$ invariant, also the orthogonal complement $T^\perp$ of $T$ in $V$ is $\g$ invariant. Then \eeqref{act} shows that  $T^\perp\subset \widetilde{V}$ is also invariant 
under the action of $T\subset \so (\widetilde{V})$ on $\widetilde{V}$ 
and therefore $T^\perp$ is $\wt\g$-invariant.  Hence, by indecomposability of $\wt\g$,  $T^\perp$ has to be degenerate or zero.
 
(2) Assume that $T$ is degenerate such that $L:=T\cap T^\perp$ is a null line. By \Cref{inv} of \Cref{algprop},  $L$ is invariant under $\g$. Moreover, by \Cref{z1comp} of \Cref{algprop} we have that there is a $ \vf\in Z^1(\g, V/T)$ such that  $\widetilde \g/T =\{ (X, \vf(X))\mid X\in \g\}\subset \g\ltimes V/T$. Hence, if $\wt\vf:\g\to V$ is a lift of $\vf$ we can write $\wt\g=\h_{\wt\vf}+ T$, where $\h_{\wt\vf}=\{(X,\wt\vf(X))\in \g\ltimes V\mid X\in \g\}$. Note that, since $T$ may not have an invariant complement,  in general we do not have that $\wt\vf\in Z^1(\g,V)$ and neither that $\h_{\wt\vf}$ is a subalgebra.

Let $L^\perp$ be the hyperplane in $V$ that is orthogonal to $L$. It is $L\subset T\subset L^\perp$ and hence, 
by formula (\ref{act}), $L$ is annihilated by the translations $T$ in $\wt\g=\h_{\wt\vf}+ T$. 
It remains to show that $L$ is invariant under $\h_{\wt\vf}$, unless $\g$ acts trivially on $L$. For this we 
consider 
 the projection $\pi:V/T\twoheadrightarrow V/L^\perp$ and distinguish two cases:

{\bf Case 1:} $\pi\circ \vf :\g\to V/L^\perp$ is zero. This means that the image of the lift $\wt\vf$ is contained in $L^\perp$. This however implies that $L$ is not only invariant under $\g$ but also under $\wt\g=\h_{\wt\vf}+ T$. Indeed, from formula (\ref{act}) it follows for an element $(X,\wt\vf(X))\in \h_{\wt\vf}$ and $\ell\in L$, that $(X,\wt\vf(X))\cdot \ell= X\cdot \ell - g(\wt\vf(X), \ell)\e_-=X\cdot \ell \in L$, since $\wt\vf(X)\in L^\perp$ and $\g$ leaves $L$ invariant. Hence, in this case $L$ is $\tgg$-invariant.

{\bf Case 2:} $\pi\circ \vf :\g\to V/L^\perp$ is not zero, i.e., the image of $\wt\vf$ is not contained in $L^\perp$. In this case, similarly to (1), we try to find a conjugation with a translation that shows that $L$ is invariant under $\h_{\wt\vf}$ (after conjugation). For $v\in V$ to be determined, we consider the associated translation $A_v$  as in \eeqref{Aconj}.
 Then, as in (1), for an element 
 \[(X,\wt\vf (X))= 
 \begin{pmatrix} 0 & -(\wt\vf(X))^\flat &0 
\\0&X &\wt\vf(X)
\\ 0&0&0\end{pmatrix}
 \]
 we get that
 \belabel{conjugate}
 A_v(X,\wt\vf(X))A^{-1}_v
 =
 \begin{pmatrix} 0 & -(\wt\vf(X)-Xv)^\flat &0 
\\0&X &\wt\vf(X)-Xv 
\\ 0&0&0\end{pmatrix}.
\end{equation}

 Fix $\ell\in L$ and $\hat\ell\in V$ such that $g(\ell,\hat\ell)=1$. Then define  $0\not=\lambda\in \g^*$ and $\rho \in \g^*$  by $\wt\vf(X)=\lambda(X)\hat\ell  \mod L^\perp$ and $X \ell = -\rho(X)\ell$, for $X\in \g$. 
 This is summarised in  $(X,\wt\vf(X))\cdot \ell =-\lambda(X)\e_- - \rho(X)\ell$. 
It also implies  that $X\hat\ell = \rho(X)\hat\ell \mod L^\perp$, i.e., 
 $\rho:\g\to \gl(V/L^\perp)$ is the induced representation of $\g$ on $V/L^\perp$.
 If we assume that $\g$ does not act trivially on $L$, $\rho$ is not zero. The key observation now is that   $H^1(\g,V/L^\perp)=0$ implies that $\lambda=c \rho $ for a constant $c$. Indeed, $\varphi \in Z^1(\g,V/T)$ induces an element 
 $\overline{\varphi}\in Z^1(\g,V/L^\perp)$. So $H^1(\g,V/L^\perp)=0$ implies that  
$\overline{\vf}(X)=X(c\,\hat\ell\mod L^\perp)= cX \hat\ell\mod L^\perp =c\rho(X) \hat\ell\mod L^\perp$ and thus $\widetilde{\vf}(X)= c\rho(X) \hat\ell \mod L^\perp$.

Now, in \eeqref{Aconj} we set $v:=c\ \hat\ell$. Taking into account that $g(\hat\ell,\ell)=1$, formula (\ref{conjugate}) shows that
\[
A_v(X,\wt\vf(X))A^{-1}_v\cdot \ell =
-(\lambda(X)-c\rho(X)) \ev_--\rho(X)\ell =- \rho(X)\ell.\]
This shows that after conjugation with a translation the null line $L$ is invariant under $\h_{\wt\vf}$ and hence under $\wt\g$.
\eprf

\bbsp
Consider $\g=\rr^n\subset \so(n)\ltimes \rrn= \so(1,n+1)_{\e_0}$, where $\e_0\in\rr^{1,n+1}$ is a null vector. Then for $T=\rr\cdot \e_0$ one can check that $\tgg=\g\ltimes T\subset \so(2,n+2)_{\e_-}$ is indecomposable. Similarly, for $T=\mathrm{span}(\e_0, \ldots \e_{n})$, $\tgg=\g\ltimes T$ is indecomposable. Note that the latter is the holonomy algebra of a $(\tem,\tg)$ for a Cahen-Wallach space $(M,g_{CW})$ of dimension $n+2$ presented in  \Cref{symexample}.
\ebsp


\subsection{Indecomposable subalgebras with completely reducible linear part}
\label{vicentesec}

The main result of this section is  Theorem \ref{algtheo},
which is a generalisation to arbitrary signature of a result in \cite{bb-ike93}  for an indecomposable  stabiliser in $\so(1,n+1)$  of a null {\em vector}\footnote{We  point out that  in \cite{bb-ike93} a similar result for the stabiliser in $\so(1,n+1)$  of a null {\em line} is given.}. 
It gives a description of all indecomposable subalgebras $\widetilde \g\subset \so(\widetilde V)_{\ev_-}=\so(V)\ltimes V$  with completely reducible linear part and non-degenerate translational part.

The main results of this and  the next section use a result about Lie algebra cohomology\footnote{We do have self-contained proofs of Theorems \ref{algtheo} and  \ref{z1lemma} that do not use Theorem \ref{HSS}, but for the sake of brevity we do not present them here as they are longer.}, which we will present first. In the following, for a $\g$-module $V$, we denote by $V^\g$ the $\g$ invariant vectors,
\[V^\g=\{ v\in V\mid Xv=0\text{ for all }X\in \g\}.\] 

\begin{theorem}[{\cite[Theorem 13]{HochschildSerre53}, \cite[Theorem 2.28]{Solleveld02}}] \label{HSS}
Let $\g$ be a Lie algebra and $V$ a $\g$-module, both finite-dimensional and over a field $\mathbb{F}$ of characteristic zero. Assume that there is an ideal $\b$ in $\g$ such that
\begin{enumerate}
\item there is a subalgebra $\h$ in $\g$ such that $\g=\h\ltimes \b$. and 
\item $V$ and $\g$ are completely reducible as $\h$-modules.
\end{enumerate}
Then 
\[
H^p(\g,V)\simeq \sum_{i+j=p} H^i(\h, \mathbb{F})\otimes H^{j}(\b,V)^\g .\]
In particular, when $p=1$,
\begin{equation}\label{HS}
H^1(\g,V)\simeq H^1(\b, V)^\g +   (\h/[\h,\h])^*\otimes V^\g .\end{equation}
\end{theorem}
The original version of this theorem is due to Hochschild and Serre \cite[Theorem 13]{HochschildSerre53}, in which the existence of $\h$ was not assumed but that   $\g/\b$ is semisimple. Solleveld proved the generalisation that is given here in his Master's thesis \cite[Theorem 2.28]{Solleveld02}. Equation (\ref{HS}) for $p=1$ follows from the facts that $H^0(\h,\mathbb{F})=\mathbb{F}$, $H^0(\b,V)=V^\b$ and that $H^1(\h,\mathbb{F})$ is isomorphic to $(\h/[\h,\h])^*$.

Now we turn to the main result of this section. We use the same conventions as in \Cref{alg1sec}.

\btheo\label{algtheo}
Let 
$\widetilde \g\subset \so(\widetilde V)_{\ev_-}=\so(V)\ltimes V$ an indecomposable subalgebra
which satisfies the following  properties
\bnum
\item 
$\g=\mathrm{pr}_{\so(V)}(\widetilde\g)$
acts completely reducibly on $V$, and 
\item the translational ideal $T=\widetilde\g\cap V$ is non-degenerate.
\enum
Under these assumptions, let $\g=\z\+\g^\prime$ be the decomposition of $\g$ into its centre and the semisimple derived Lie algebra.
Then, $\g$ acts trivially on $T^\perp$ and $T\not=0$. Moreover, there is  a linear map $\vf:\g \to T^\perp$ with $\vf|_{\g^\prime}=0$  such that after  conjugation in $\so(V)\ltimes V$, $\widetilde \g$ is of the form 
$
\widetilde \g=\h_\vf \ltimes T
$, where 
\belabel{hdef}\h_\vf =\{ (X,\vf(X))\in \so(V)\ltimes V\mid X\in  \g\},\end{equation}
and the image of $\vf$ is co-null in $T^\perp$, i.e., $(\im\,\vf)^\perp\subset T^\perp$ is totally null. 
\etheo

\bigskip

The proof of this theorem is based on a lemma which will follow from Theorem \ref{HSS}. 
Since $V$ is a completely reducible module,  $\g$ is reductive and hence $\g=\z\+\g'$, where $\g'=[\g , \g ]$ is semisimple, $\z$ is the centre of $\g$ and  we denote the projection to $\z$ by $\pi_\z:\g\to\z$. 

\blem\label{L3}
Let $V$ be a semi-Euclidean vector space and $\mathfrak{g}\subset \so (V)$ be a 
Lie subalgebra which acts completely reducibly on $V$. 
Then 
\[ Z^1(\g , V) = \d V \oplus \iota (Z^1(\z , V^\g )),\]
where  $\z$ the center of $\g$ and $\iota:   Z^1(\z,V^\g)\to Z^1(\g,V)$ is the inclusion $\iota(\vf)=\vf\circ \pi_\z$ with $\pi_\z:\g\to \z$. In particular,
\[ H^1(\g , V) \simeq H^1(\z , V^\g ).\]
\elem

\begin{proof}
First note that for $\vf\in Z^1(\z,V^\g)=\mathrm{Hom}(\z,V^\g)$,   $\iota(\vf)$ is indeed a cocycle in $Z^1(\g,V)$. Moreover, with $V$ completely reducible we have  $V^\g\cap \g V=\{0\}$ and hence that \[\d V\cap  \iota(Z^1(\z,V^\g))=\{0\}.\]
It remains to show that
\[
H^1(\g,V) \simeq Z^1(\z,V^\g).\]
But we can apply Theorem \ref{HSS} to $\g$, $\b=\z$ and $\h=\g^\prime$ to get from equation (\ref{HS}) that 
\[ H^1(\g,V)\simeq H^1(\z, V)^\g.\]
Therefore it remains to show that $H^1(\z, V)^\g$ is isomorphic to $Z^1(\z , V^\g )$. We note that
\[H^1(\z,V)^{\g}=\{[\vf]\in H^1(\z,V)\mid \vf\in Z^1(\z,V): \forall X\in \g\ \exists v\in V: X \vf = \d_\z v\}.\]
Clearly, $Z^1(\z,V^\g)$ injects into $H^1(\z,V)^{\g}$ by mapping a cocycle to its equivalence class in $H^1(\z,V)^{\g}$, but we have to show that this is surjective.

For this, note that if $[\vf]\in H^1(\z,V)^{\g}$, we have that $\vf\in Z^1(\z,V)$ is such that for each $X\in \g$, there is a $v_X\in V$ such that 
\[X\vf(Z)=Zv_X.\]
This defines a linear map $\hat\vf:\g\to V/V^\z$ by the relation 
\[\hat\vf(X)= v_X+V^\z.\]
Since $Z\in \z$, it is 
\[Z v_{[X,Y]} =[X,Y]\vf(Z)=Z(Xv_Y-Yv_Y),\]
and so 
$\hat\vf$ is a cocycle, i.e., 
$\hat\vf\in Z^1(\g,V/V^\z) $. This induces a linear map 
\[ \Psi :H^1(\z,V)^\g \ni [\vf] \longmapsto [\hat\vf] \in H^1(\g,V/V^\z),\]
which clearly  has the kernel $Z^1(\z,V^\g)$. Therefore 
\[ H^1(\z,V)^\g/Z^1(\z,V^\g)\simeq \im(\Psi) \subset H^1(\g,V/V^\z).\]
Now we use again equation (\ref{HS}) in Theorem \ref{HSS} to get that 
\[H^1(\g,V/V^\z)\simeq H^1(\z,V/V^\z)^\g.\]
The last step in the proof is to show that $H^1(\z,V/V^\z)=\{0\}$. For this we set $W:=V/V^\z$ and we have to show that $Z^1(\z , W)=\d W$. The $\z$-module $W$ is an orthogonal sum of 2-dimensional indecomposable modules $W_i$ and 
$Z^1(\z , W) = \oplus_i Z^1(\z , W_i)$. Therefore we can assume without
loss of generality that $W=W_1$ is 2-dimensional. Let us denote by
$I$ a generator of the 1-dimensional Lie algebra $\so (W)$ such that $I^2=\epsilon \mathrm{Id}$, $\epsilon=\pm 1$. Then 
there exists $0\neq \lambda \in \z^*$ such that  $Xv=\lambda  (X)I v$ for all
$X\in \z$ and $v\in W$. Given $\varphi \in Z^1(\z , W)$, we have 
\[ 0=X\varphi (Y)-Y\varphi (X)= \lambda (X) I \varphi (Y) -\lambda (Y) I\varphi (X),\]
for all $X,Y\in \z$. 
The latter equation implies that there exists a vector $v\in W$ such that  
\[ I\varphi(X) = \lambda (X) v,\]
for all $X\in \z$.  This shows that $\varphi = \epsilon \lambda \ot Iv =  \epsilon \d v \in \d W$ and hence  that $H^1(\z,V/V^\z)=\{0\}$. 

This implies that $\im(\Psi)=\{0\}$ and hence that $Z^1(\z,V^\g)=H^1(\z,V)^\g\simeq H^1(\g,V)$. 
\end{proof}

Now we are in a position to prove \Cref{algtheo}:

\bprf[Proof of \Cref{algtheo}] From \Cref{algprop} we have that 
 $\wt{\g} = \mathfrak{h}_\vf \ltimes T$, where 
$\h_\vf$ is given by \eeqref{hdef} with  $\varphi \in Z^1(\g , T^\perp )$. It remains to verify that $\vf|_{\g'}=0$. Lemma \ref{L3} shows that,  up to conjugation of $\wt{\g}$ in $\so (V) \ltimes V$ by a translation in $T^\perp$ we have
$\varphi \in \iota (Z^1(\z , T^\perp \cap V^\g ))$. This shows that $\varphi$ vanishes on $\g'$ and takes values in 
$T^\perp \cap V^\g$. The $\g$-invariant decomposition
\[ T^\perp = (T^\perp \cap V^\g)\stackrel{\perp}{\oplus} \g T^\perp\]
shows that the subspace $\g T^\perp \subset V$ is non-degenerate.  Let us check
that it is  not only  invariant under $\g$ but also under $\wt{\g}$. For this is suffices 
to observe that, by our description of $\wt{\g}$ and the fact that $\mathrm{im}\, \varphi \subset T^\perp \cap V^\g$, the translational part of any element of $\wt{\g}$ is contained in $(T^\perp \cap V^\g) \oplus T$. Therefore it is perpendicular to $\g T^\perp$, which shows that $\g T^\perp \subset V \subset \widetilde V$ is
$\wt{\g}$-invariant. Since $\wt{\g}$ is indecomposable this proves that $\g T^\perp =0$. 

Note that this implies that $T\not= 0$, because otherwise $T^\perp=V$ and hence $\g=0$ and $\tgg=T=0$, which contradicts the indecomposability of $\tgg$.

Finally, let $(\mathrm{im}\, \varphi)^\perp$ be the  orthogonal space of $ \mathrm{im}\, \varphi$ in $T^\perp$ and  $W$ be
a $\g$-invariant complement of $\mathrm{im}\, \varphi \cap  (\mathrm{im}\, \varphi)^\perp$ in $ (\mathrm{im}\, \varphi)^\perp$. Then $W$ is non-degenerate. Again it is not only 
$\g$-invariant but  also $\wt{\g}$-invariant because the translational part of  any element in $\tgg$ is contained
in $(\mathrm{im}\, \varphi) \oplus T$ and 
$W\subset  (\mathrm{im}\, \varphi)^\perp\subset T^\perp$.
 Since $\wt{\g}$ is indecomposable this shows that $W=0$ and, hence, that 
$ (\mathrm{im}\, \varphi)^\perp \subset  \mathrm{im}\, \varphi$.
\eprf

\subsection{Cohomology of indecomposable subalgebras in $\so(1,n+1)$}
In this section we compute the $1$-cocycles for  subalgebras $\g$ of $\so(1,n+1)$ that act indecomposably on $V=\rr^{1,n+1}$.
Such a subalgebra is either irreducible, in which case it is equal to $\so(1,n+1)$  \cite{olmos-discala01} and hence $H^1(\g , V)=0$, or admits a parallel null-line $L=L_-=\rr\e_-$.
That such a subalgebra  belongs to one of the four types discussed in the proof of \Cref{z1lemma} below, was proven in \cite{bb-ike93}.

In the following we will use equations \re{act} and \re{bracket} 
and  the identifications in Section \ref{alg1sec}  with $(\wt{V}, \wt{g},V, g)$ replaced by 
$(V, g,V_0, g_0)$. Note that $g_0 = g|_{V_0\times V_0}$ is the standard Euclidean scalar product on $V_0=\mathbb{R}^n$. 
We will use the standard decomposition $V=\rr\cdot\e_-\+V_0\+\rr\cdot\e_+$ and the notation 
$\g_0=\pr_{\so(V_0)}(\g)$, $\g_0^\prime=[\g_0,\g_0]$, $\z=\z(\g_0)$ for a subalgebra $\g \subset \so (V)_{L}$.

\btheo\label{z1lemma}
Let $V=\rr\cdot\e_-\+V_0\+\rr\cdot\e_+$ be the Minkowski space  with null vectors $\e_{\pm}$ and Euclidean vector space $V_0$,  and let $\g\subset \so (V)_L\subset \so(V)$ be an indecomposable subalgebra.  Then 
\[ H^1(\g,V)=0,\]
or $\g$ annihilates $\ev_-$.
%
%
\end{theorem}

\begin{proof}
First note that if $\dim( V)=2$, i.e., $V_0=0$, then \[\g=\so(1,1)=\rr \begin{pmatrix} 1&0\\0&-1\end{pmatrix}, \] and
$H^1(\g,V)$ clearly is trivial. 

If $\dim(V)\ge 3$, then
according to \cite{bb-ike93}, any indecomposable subalgebra $\g$ of $\so(V)_L$, belongs to one of  four different types. 
Two of them annihilate $\e_-$, whereas the other two act {non-trivially} on $\rr\cdot \e_-$. The latter are given as follows, where
 $\z$ denotes the centre of $\g_0=\z\+\g_0^\prime$ with $\g_0^\prime=[\g_0,\g_0]$ semisimple:
\bnum
\item
$\g=(\rr\+\g_0)\ltimes V_0$. We can set \[\b:=(\rr\+\z)\ltimes V_0.\] Then $\g/\b=\g_0^\prime$ is semisimple and acts completely {reducibly on $V=\rr\e_-\+V_0\+\rr\e_+$.}
\item $\g=(\h_f\+\g_0^\prime)\ltimes V_0$, with $0\neq f\in \z^*$ and $\h_f=\{(f(Z),Z)\mid Z\in \z\}\subset \rr\+ \z$. Here we set 
\[\b:= \h_f\ltimes V_0,\] so that $\g/\b=\g_0^\prime$ {acts again completely reducibly on $V$.}
 \enum
Now we apply Theorem \ref{HSS} to $\g$, the ideal $\b$ as given in the above and $\g_0^\prime=\g/\b$. Since $\g_0^\prime$ is semisimple,  the second summand in (\ref{HS}) vanishes  and we get
\[
H^1(\g,V)\simeq H^1(\b, V)^\g.\]
In order to determine $H^1(\b, V)$ we can apply  Theorem  \ref{HSS} again, this time to $\b$, the ideal $\a=V_0$ and the subalgebra  $\h=\rr\+\z$ in case (1) and $\h=\h_f$ in case (2). In both cases $\h$ is abelian and acts completely {reducibly} on $\b$ and on $V$, so the assumptions of Theorem \ref{HSS} are satisfied and we
get 
\[
H^1(\b,V)\simeq  H^1(\a, V)^\b+\h^*\otimes V^\b .\]
Since  for both types of $\g$, $\b$ scales $\e_-$ and contains  {$\a=V_0$,} we have that {$V^\b=\{0\}$, cf.~(\ref{act}). So} it remains to show that $(H^1(\a, V)^\b)^\g=H^1(\a, V)^\g$ is trivial. Even though $\a=V_0$ is abelian, we cannot  apply Lemma \ref{L3}  to find $H^1(\a,V)$, because $\a$ does not act completely {reducibly} on $V$. 
Instead, we first note that
if $\dim(\a)=\dim(V_0)=1$, then $Z^1(\a,V)=  \a^*\ot V$, $\d V = \a^*\ot (\mathbb{R} \mathrm{e}_- \oplus V_0)$ and the line $\a^*\ot \mathrm{e}_+ \subset Z^1(\a,V)$ projects isomorphically 
onto $H^1(\a,V)$.  From (\ref{act}) we see that the action of an element $(a,X,v)\in \g$ on $H^1(\a,V)$ is given by multiplication with $-a$. Since  for both types of $\g$
there are elements with $a\neq 0$, we conclude that $H^1(\a,V)^\g=0$. Thus 
we can assume that $\dim(V_0)\ge 2$. Then $\vf\in Z^1(\a,V)$ splits into components $\vf=(\vf_-,\vf_0,\vf_+)$ with respect to $V=\rr\cdot \e_-\+V_0\+\rr\cdot \e_+$ and {with} $\vf_\pm\in \a^*$ and $\vf_0\in V_0^*\otimes V_0$. 
From (\ref{act}) we see that  ${u}\in V_0=\a$ acts on ${(v_-,v,v_+)\in V}$ as 
\[u\cdot (v_-,v,v_+)= (-u^\top v, v_+ u,0).\]
Since $\a$ is abelian,  the cocycle condition  for $\vf$ yields
\[ {u^\top\vf_0(v) - v^\top\vf_0(u)}=0, \quad \vf_+(u) v-\vf_+(v) u=0, \]
for all $u,v\in V_0$.
Since $\dim (V_0)\ge 2$ the second equation implies that $\vf_+=0$.  The first equation implies that $\vf_0$ is a symmetric endomorphism of $V_0$. 
This shows that $Z^1(\a,V)=V_0^*\+ S(V_0)$ and that  
\[H^1(\a,V)\simeq S_0(V_0),\]
where $ S(V_0)$ and $ S_0(V_0)$ denote the symmetric and the symmetric trace-free endomorphisms of $V_0$.  Hence, every element $[\vf]\in H^1(\a,V)^\g$ can be represented by a  symmetric trace free-matrix $S$. Therefore the equation that $[\vf]$ is $\g$-invariant, which {means}  that for every $(a,X,v)\in \g$ there  is a $(w_-,w,w_+)\in V$ such that  
\[ 
(a,X,v)\cdot \vf
=
\d (w_-,w,w_+)
 \]
 becomes,   by (\ref{act}),
 \[
 (a,X,v) (\vf(u) )- \vf([(a,X,{v}),(0,0,u)])
=
\begin{pmatrix}
{-v^\top S u}
\\
[X,S]u-aSu
\\
0
\end{pmatrix}
=
\begin{pmatrix}
 {-w^\top u}
 \\
 w_+ u
 \\
 0\end{pmatrix}
\] 
for all $u\in V_0$.
This implies that 
\[
[X,S] =(w_+\Id +aS).
\]
Taking the trace yields $w_+=0$  and multiplying both sides by $S$ and taking the trace
gives
\[a \,\trace(S^2)= \trace( [X,S]S) =0.\]
Since we can chose $a\not =0$ for both types, this implies that  $\trace(S^2)=0$. With $S$  symmetric, we obtain that $S=0$, hence $H^1(\a,V)^\g=\{0\}$ and consequently that $H^1(\g,V)=0$.
 \end{proof}
 
\bbem \label{types34} Similar arguments  can be used to determine $H^1(\g,V)$ for the other two types of indecomposable subalgebras of $\so(V)_L$, those that leave invariant the null vector $\e_-$ (notations as in \Cref{z1lemma}, for details about these subalgebras see \cite{bb-ike93}).
One of them is of the form $\g=\g_0\ltimes V_0$ and by applying to above arguments to 
$\b:=\z\ltimes V_0$
one can show that 
\[H^1(\g,V)=S_0(V_0)^{\g_0}\+ \z^*\+(V_0^{\g_0})^*,\]
where $S_0(V_0)^{\g_0}$ denotes 
the trace-free, symmetric matrices that commute with $\g_0$. 

A similar statement holds for the remaining fourth type where 
$\g=(\h_f\+\g_0^\prime)\ltimes T_0$, with $0\not=T_0\subsetneq V_0$ invariant under $\g_0$ such that $T_0^\perp\subset \ker(\g_0)^\perp$  and \[\h_f=\{(0,Z,f(Z))\mid Z\in \z\}, \text{ with $ f:\z\to T_0^\perp$ surjective.}
\]
Here one can apply the above strategy to 
$\b:=
 \h_f\ltimes T_0$.
 However, since the result is somewhat technical and we do not need it for what follows, we will not give the details here.
\ebem
Finally we study the two types of indecomposable  subalgebras of $\so(1,n+1)$ that stabilise the null line $L$ but act non trivially on $L$, i.e., the types considered in the previous theorem.
\bs\label{cycleprop}

Let $V=\rr\cdot\e_-\+V_0\+\rr\cdot\e_+$ be the Minkowski space with null vectors $\e_{\pm}$,  and let $\g\subset \so(V)_L\subset \so(V)$ be an indecomposable subalgebra stabilising a null line $L=\rr\e_-$ but acting  non trivially on $L$. Let 
$\rho\in \g^*$ be defined by the representation of $\g$ on $V/L^\perp$, i.e.,
\[(a,X,v) [u]=\rho(a,X,v)[u], \quad i.e.,\ \rho(a,X,v)=-a,\]
(according to formula (\ref{act})).
Then, every $\vf\in Z^1(\g,V/L^\perp)\subset\g^*$ is a multiple of $\rho$, or equivalently, $Z^1(\g,V/L^\perp)=d(V/L^\perp)$, i.e., $H^1(\g,V/L^\perp)=\{0\} $).
\es
\bprf
First we consider the type $\g=(\rr\+\g_0)\ltimes V_0$. Note that we do not exclude the case $V_0=0$, for which $\g=\so(1,1)$.   For $a\not=0$, every $\vf\in Z^1(\g,V/L^\perp)$ satisfies 
\[
0=\vf( \left[(a,0,0),(0,X,0)\right])=-a\vf(0,X,0),
\]
for all $X\in \g_0$. Hence $\vf|_{\g_0}=0$.
Similarly, we get
\[
a\vf(0,0,v)=\vf(\left[(a,0,0),(0,0,v)\right])
=
-a\vf(0,0,v),
\]
for all $v\in \rrn$. Hence $\vf|_{V_0}=0$. This implies that $\vf$ is a multiple of $\rho$.

Now we assume that $\g = (\mathbb{R} \zeta_0 \oplus \k)\ltimes V_0$,  
where $\k = \ker f \oplus \g_0' \subset \z \oplus \g_0'=\g_0 = \pr_{\so(n)}\g$, $f\in \z^*$, $\zeta_0 = (1,Z_0)$ and $Z_0\in \z$ is a vector in the centre $\z$ of $\g_0$ 
such that $f(Z_0)=1$. In particular, $\dim(V_0)\ge 2$. 
For $X\in \k$ we obtain
\[
0=\vf( \left[\zeta_0,(0,X,0)\right])=-\vf(0,X,0),
\]
i.e., $\vf|_{\k}=0$.
Moreover, for all $v\in \rrn$ from the cocycle condition we get 
\begin{eqnarray*}
-\vf(0,0,v)&=&
\vf\left(\left[ \zeta_0,(0,0,v)\right]\right)
\\
&=&
 \vf(0,0,(1+Z_0)v)
\ =\
 \vf(0,0,v)+\vf(0,0,Z_0v),
\end{eqnarray*}
i.e., that
\belabel{zeqn}
\vf(0,0,Z_0v)
=
-2\vf(0,0,v). 
\end{equation}
Applying \eeqref{zeqn} twice one obtains 
\[
\vf(0,0,Z_0^2v)
=
-2\vf(0,0,Z_0v)
=
4\vf(0,0,v).
\]
Since $Z_0\in \so(n)$, its square $Z^2_0$ is diagonalisable with only nonpositive eigenvalues. 
Hence we get that $\vf|_{V_0}=0$. This implies that $\vf$ is a multiple of $\rho$.
\eprf

\section{Holonomy of metrics $\tg=2\,\d u\,\d v+u^2g$}
\label{holsec}
In this section we will use the geometric lifting properties of metrics of the form $\tg=2\,\d u\,\d v+u^2g$  derived in Section \ref{uvgsec} and the algebraic results of Section \ref{algsec} in order study the holonomy of $\tg$.
For cones over manifolds $(M,g)$ of arbitrary signature but with completely reducible holonomy, Theorem \ref{algtheo} has the following consequences.
\bfolg\label{cor1}
Let $\gg$ be a semi-Riemannian metric of signature $(t,s)$ on a manifold $\M$ the holonomy algebra $\hol(\gg)$ of which acts completely reducibly. Consider the metric \[\tg=2\, \d u\, \d v +u^2\gg\] on $\tem = \rr^+\times \rr\times \M$ and assume that the holonomy $\widetilde \g :=\hol(\tg)$ of $\tg$ acts indecomposably, i.e.~without a proper non-degenerate invariant subspace, 
and that the translational ideal $T:= \tgg \cap V$ is non-degenerate.
Then 
\[
\hol(\tg)=\hol(\gg)\ltimes  V.\]
\efolg
\bprf First Proposition \ref{myprop} gives that $\g=\mathrm{pr}_{\so(t,s)}(\widetilde\g)=\hol(\gg)$.
Then Theorem \ref{algtheo} applied to $\widetilde \g$ shows that $\g T^\perp=0$. 
If $T^\perp\not=\{0\}$, then  $g$ admits a non-degenerate parallel vector field which, according to Lemma \ref{paralemma}, would lift to a non-degenerate parallel vector field for $\tg$. This is excluded by the assumption of indecomposability of $\tg$. 
\eprf

As an aside, let us record the consequence of \Cref{algtheo} for Lorentzian metrics of the form $\tg=2 \d u\d v+u^2g$. We have obtained this result in \cite[Section 9]{acgl07}.
\bfolg
Let $\gg$ be a Riemannian metric in dimension $n$ and $\tg=2\, \d u\, \d v+u^2\gg$ a Lorentzian metric. If the holonomy of $\tg$ acts indecomposably, then 
\[\hol(\tg)=\hol(\gg)\ltimes \rr^{n}.\]
\efolg
In the main result of this section we deal with metrics $\tg$ over {\em Lorentzian} metrics $g$.
\btheo\label{lortheo}
Let $\gg$ be a Lorentzian metric on an $n$-dimensional simply connected manifold $M$ and $\tg=2\, \d u\, \d v+u^2\gg$ of signature $(2,n)$ on $\rr^+\times \rr\times M$.  If the holonomy of $\tg$ acts indecomposably, then 
\[\hol(\tg)=\hol(\gg)\ltimes \rr^{1,n-1},\]
or $g$ admits a parallel null vector field and $\tg$ admits two linearly independent parallel null vector fields that are orthogonal to each other.
%
\etheo
\bprf
Set $\widetilde \g:=\hol(\tg)$,
 $\g:=\hol(\gg)$ and $V:=\rr^{1,n-1}$. Let $T=\widetilde\g\cap V$ be the pure translations in $\widetilde\g$.  We have to show that $T=V$, in which case we have that $\widetilde\g=\g\ltimes V$, or that $\g$ admits an invariant null vector.
Hence we assume from now on that $T\not= V$.
By Proposition \ref{myprop} we have that  $\widetilde\g \subset \g\ltimes V$ with $\g = \mathrm{pr}_{\so(1,n+1)}(\widetilde \g)$ and  $T$ is $\g$ invariant.  
 
 Since $\g$  is a holonomy algebra, we can apply the Wu splitting theorem and obtain $\g =\g_1\+\ldots \+\g_k$ and 
\[V=\rr^{1,n-1}=V_0\+^\perp V_1\+^\perp V_2\+^\perp \ldots \+^\perp V_k,\]
with $\g_i$ acting trivially on $V_j$ for $i\not= j$, all the $V_i$'s are non-degenerate, 
with $V_0$ a trivial representation and $V_i$ indecomposable for $i=1, \ldots , k$. Since we 
assume that $\widetilde \g$ acts indecomposably, $\tg$ does not admit non-degenerate parallel vector fields. Therefore, Lemma \ref{paralemma} implies that $V_0=\{0\}$.
Hence we can choose the $V_i$ in a way that $V_1$ is the Minkowski space and indecomposable for $\g_1$ and the remaining $V_i$ are Euclidean and irreducible for $\g_i$.
Note that for $i=2, \ldots, k$ we have that  $\g_i\subset \so(n_i)$, where $n_i=\dim(V_i)$. 
Moreover, we can write
\[
\g\ltimes V= (\g_1\+\ldots \+\g_k)\ltimes (V_1\+\ldots\+ V_k) =
(\g_1\ltimes V_1)\+ \ldots \+ (\g_k\ltimes V_k).\]

Not only $T$ but also
$T_i=\widetilde\g\cap V_i$ is $\g$-invariant. Hence we have for $i=2, \ldots , k$ that $T_i=\{0\}$ or $T_i=V_i$, and that $T_1$ is degenerate, trivial or equal to $V_1$. The same holds for $P_i=\pr_{V_i}T$ containing $T_i$. 

%

Since  $V_1$ is indecomposable but not necessarily irreducible, we have to consider several cases for $T$:

\medskip

{\bf Case 1:} $T$ is indefinite, i.e., of signature $(1, \dim(T)-1)$. In this case we have that $T\cap V_1=V_1$ and that   $T^\perp$ is positive definite and hence a direct sum of irreducibles that can be arranged such that $T^\perp=V_{\ell+1}\+\ldots\+ V_k$ with $ 1\le \ell \le k-1$ (recall that $T\not=\{0\}$ and that we are working under the assumption $T\not=V$). We apply  Theorem \ref{algtheo} to the following data: 
 
We define $\widetilde W:=\rr\e_-\+T^\perp \+\rr\e_+$ and a representation  $\rho :\widetilde\g \to \so(\widetilde W)_{\e_-}$ by 
$\rho(X,v)=(X|_{T^\perp},\pr_{T^\perp}(v))$. 
Since $T^\perp $ is positive definite, 
 it is $T\cap T^\perp=\{0\}$, so by  its very definition $\rho(\widetilde\g)$ satisfies that $\rho(\widetilde\g)\cap T^\perp=\{0\}$. 
 On the other hand, 
 $\rho(\widetilde\g)$ satisfies the assumptions of Theorem  \ref{algtheo}. Hence, with $\rho(\widetilde\g)\cap T^\perp=\{0\}$, the projection of $\rho(\tgg)$ onto $\so(T^\perp)$ acts trivially on $T^\perp$. But this contradicts the fact that $T^\perp=V_{\ell+1}\+\ldots \+ V_k$, where the $V_i$'s are irreducible for $\pr_{\so(1,n-1) }(\tgg)$ and hence for $\pr_{\so(T^\perp)}(\rho(\tgg))$.

\medskip

{\bf Case 2:} $T$ is positive definite (including the case $T=0$), i.e.,  $T\cap V_1=\{0\}$ in virtue of the indecomposability of the $\g_1$-module $V_1$. In this case  $T^\perp$ is non-degenerate and $V_1\subset T^\perp$, i.e., \[T^\perp= 
V_1\+\ldots \+V_\ell\quad\text{ and }\quad T=V_{\ell+1}\+\ldots \+ V_k.\] Set \[\g_-=\g_1\+\ldots \+\g_\ell\quad \text{ and }\quad \g_+=\g_{\ell+1}\+\ldots \+\g_k,\] where $\g_+=\z_++\g_+'$ is reductive with centre $\z_+$ and derived algebra $\g_+^\prime$, and $\g_1$ is either irreducible or indecomposable but with an invariant null line $L$. 

In the case when $\g_1$ acts irreducibly on $V_1$, $\g$ acts completely reducibly on $V$ and, since $T$ is positive definite, we can apply \Cref{cor1} to get a contradiction to $T\not=V$. 

Hence we can assume that $\g_1$ is
contained in the stabiliser of the null line $L$, i.e.,  $\g_1\subset \so(V_1)_L$.
Since $\g_+$ acts trivially on $T^\perp$ and the $V_i$'s are irreducible for $i=\ell+1,\ldots , k$,  and $\g_-$ acts trivially on $T$, we have that 
\begin{equation}\label{kerg+}
V^{\g_-}\cap T^\perp = V^\g
\end{equation}
As in   \Cref{algprop}, there is a $\vf \in Z^1(\g,T^\perp)$,  such that $\widetilde\g=\h_\vf \ltimes T$. 
Then for $X_\pm\in \g_\pm$ we have
\[
0
=\vf([X_+,X_-])= X_-\vf(X_+).
\]
Hence, using equality (\ref{kerg+}), we obtain $\vf(\g_+)\subset V^{\g_-}\cap T^\perp=V^\g$. If $\vf|_{\g_+}\not=0$, we conclude that  $V^\g$ is a non-trivial subspace of $T^\perp$ and thus $V^\g=L$. Hence, if $\vf|_{\g_+}\not=0$ there is a non-zero vector in $L$ that is annihilated by $\g$ and therefore   the metric  $g$ admits a parallel null vector field.  

Hence, for Case 2  we can assume that $\vf|_{\g_+}=0$ and  are left with  \[\vf:\g_-\longrightarrow  T^\perp=V_1\+\ldots \+V_\ell.\] 
Then for $X_i\in \g_i$ and $X_j\in \g_j$, with $i, j\in \{ 1,\ldots ,\ell\}$, and $i\not= j$  we have
\[0=X_i\vf(X_j)-X_j\vf (X_i),\]
and hence 
\begin{equation}\label{fij}
X_i\vf(X_j)=0.\end{equation}
Since the $V_{j\ge 2}$ are irreducible, this relation for $j=1$ implies that 
\[\vf|_{\g_1}\in Z^1(\g_1,V_1).\]
On the other hand, for $j\ge 2$ we have that 
\[
\vf|_{\g_j}\in Z^1(\g_j,L\+V_j),\]
where $L$ is the $\g$-invariant null line.
If we write $\vf=\vf_1+ \ldots +\vf_\ell$ with $\vf_i:\g_-\to V_i$, then relation (\ref{fij}) implies  that if there exists $X_j\in \g_j$ for some
$j\ge 2$ such that  $\vf_1(X_j)\not= 0$, and thus $\vf_1(\g_j)=L$, then $\g_1$ and hence $\g$ acts trivially on $L$. The latter case implies again that the metric  $g$ admits a parallel null vector field.

Hence, we have obtained that $g$ admits a parallel null vector field or that 
$\vf=\vf_1+ \ldots + \vf_\ell$  with $\vf_i\in Z^1(\g_i,V_i)$ for $i=1, \ldots \ell$. Since the $V_i$ for $i\ge 2$ are irreducible, we have that $Z^1(\g_i,V_i)=\d V_i$, by \Cref{L3}. The case  $i=1$ is covered by \Cref{z1lemma}  where we have shown that  $H^1(\g_1,V_1)= 0$ whenever $g$ does not admit a parallel null vector field. Hence, if $g$ does not admit a parallel null vector field we obtain from (1) in  \Cref{algtheo1} that $T^\perp$ is degenerate or zero. But this contradicts $T\not=V$ and that $T^\perp$ in Case 2 is non-degenerate.

\medskip

{\bf Case 3:} $T$ is degenerate, i.e., there is a $\g$-invariant null line $L=T\cap T^\perp$. Our aim is to apply point (2) in \Cref{algtheo1} and \Cref{cycleprop}. First note that $\g$ and therefore the indecomposable subalgebra $\g_1 \subset \so(V_1)$ both leave $T$ and hence the null line $L$ invariant. If $\g_1$ acts trivially on $L$, then $\g$ acts trivially on $L$ and the metric $g$ admits a parallel null vector field. Therefore we can assume that $\g_1$ does not act trivially on $L$. This means that we can apply \Cref{cycleprop} 
to $\g_1$ and $L^\perp \cap V_1$ to get that 
\[Z^1(\g_1,V_1/(L^\perp\cap V_1))=d( V_1/(L^\perp\cap V_1)).\]
On the other hand,  we note that there is a canonical identification
\[V/L^\perp \simeq V_1/(L^\perp\cap V_1),\]
which shows that 
$\g_2\+\ldots \+\g_k$ acts trivially on $V/L^\perp$. Hence,  
\[
Z^1(\g,V/L^\perp)=Z^1(\g_1, V_1/(L^\perp\cap V_1))=  d(V/L^\perp).\] Since we have assumed that $\g$ does not act trivially on $L$, (2) in  \Cref{algtheo1} implies that, up to conjugation, $\wt\g$ leaves invariant a null line $L$. This means that $(\tem,\tg)$ admits a recurrent null vector field in the span of $\del_v$ and $L$ (even a recurrent section in $L$). But in this situation, \Cref{recprop} ensures the existence of a parallel null vector field on $(M,g)$. 
\eprf

\section{Cones with parallel null $2$-planes}
\label{planesec}
In this section we consider the base manifolds $(M,g)$ of cones that admit a parallel distribution of totally null $2$-planes. 
Our main result is the description of the most general local form of the metric $g$. 
To exclude trivial cases we assume $\dim M >1$. 

\subsection{The induced structure on the base}
If $(\hm,\hg)$ is a semi-Riemannian manifold and  $\widehat{\mathbf{P}}$  a parallel totally null $2$-plane bundle, then    locally   there are two null vector fields $\chi$ and $\zeta$ that are orthogonal to each other and such that
\belabel{pplane}
\begin{array}{rcl}
\hnab\chi &=& \alpha \otimes\chi+\mu\otimes\zeta,
\\
\hnab\zeta&=&\beta \otimes \chi+\nu\otimes \zeta,
\end{array}
\eeq
for $1$-forms $\alpha$, $\beta$, $\mu$ and $\nu$.

If $(\hm,\hg)$ is a timelike cone  with a parallel null $2$-plane  bundle $\widehat{\mathbf{P}}$,  we can intersect $\widehat{\mathbf{P}}$ with $\xi^\perp$, where $\xi$ is the Euler vector field. 
A subset of $\hm = \mathbb{R}^{>0}\times M$ will be called \emph{conical} if it is of the form
$\widehat{M}_0=\mathbb{R}^{>0}\times M_0$ for some subset $M_0\subset M$.
\blem\label{hilf:lem}
On  a conical open dense subset in $\hm$
the intersection $\widehat{\mathbf{P}}\cap \xi^\perp$  is a null-line bundle $\mathbf{L}$
invariant under the flow of $\xi$. In particular, ${\mathbf{L}}$ admits local sections, defined on conical open sets,  
invariant under the flow of $\xi$ and descends to a null line distribution on an open dense subset of $\M$. 
\elem
\bprf
For this and the following proofs, we note that
\[ \left[\xi , \Gamma(\xi^\perp)\right]\subset \Gamma(\xi^\perp)\quad\mbox{and}\quad \left[\xi , \Gamma(\widehat{\mathbf{P}})\right]\subset \Gamma(\widehat{\mathbf{P}}).\]
This implies that the dimension of the fibres of $\widehat{\mathbf{P}}\cap \xi^\perp$ is constant on the integral curves of $\xi$. 
At each point $p\in \hm$, $\xi^\perp|_p$ is a hyperplane and $\widehat{\mathbf{P}}|_p$ a $2$-plane in $T_p\hm$. Hence their intersection has dimension one or two. Now let us  assume that, over an  open set $U\subset \hm$, $\widehat{\mathbf{P}}\cap \xi^\perp$ is of rank $2$, i.e.\ that $\widehat{\mathbf{P}}\subset \xi^\perp$. Hence $\widehat{\mathbf{P}}\cap \xi^\perp$ a distribution of $2$-planes spanned by vector fields $V_1$ and $V_2$ on $U$  that are tangential to $\M$. Then formulae \re{coneLC} and \re{pplane}  give us
\[
T\M\ni \widehat\nabla_XV_i = \nabla_XV_i +\gg(X,V_i)\xi,
\]
for all $X\in T\M$. Hence, on $U$ it is $\gg(X,V_i)=0$ for all $X\in T\M$ which is impossible. Consequently, 
the conical open set over which the fibres of $\widehat{\mathbf{P}}\cap \xi^\perp$ are one-dimensional is dense and 
$\widehat{\mathbf{P}}\cap \xi^\perp$ restricts to a line bundle $\mathbf{L}$ over that set. 
\eprf
Now  we  project $\widehat{\mathbf{P}}$ to $\xi^\perp$. 
\blem
The projection $\mathrm{pr}_{\xi^\perp}(\widehat{\mathbf{P}})\subset \xi^\perp$ is an involutive $2$-plane distribution $\mathbf{P}$ on $\hm$ and descends to an involutive  $2$-plane distribution on $\M_0$.
\elem
\bprf
First note that  the fibres of $\mathrm{pr}_{\xi^\perp}(\widehat{\mathbf{P}})$ have dimension $2$ because $\widehat{\mathbf{P}}\cap \mathbb{R}\cdot \xi=\{0\}$. Hence,  $\mathbf{P}:=\mathrm{pr}_{\xi^\perp}(\widehat{\mathbf{P}})\subset \xi^\perp$ is a $2$-plane distribution.

Clearly the projection of a vector field $V$ on $\hm$ to $\xi^\perp$ is given as
\[\pr_{\xi^\perp}(V)=V+r^{-2}\widehat\gg(V,\xi)\xi.
\]
By a calculation using $\hnab\xi=\Id$  we obtain for all $V_1, V_2 \in \mathfrak{X}(\hm)$:
\be
\left[\pr_{\xi^\perp}(V_1),
\pr_{\xi^\perp}(V_2)
\right]
&=&
\pr_{\xi^\perp}\left( [V_1,V_2]+ r^{-2}\widehat{g}(V_2,\xi )[V_1,\xi] -r^{-2}\widehat{g}(V_1,\xi )[V_2,\xi]\right).
\ee
Since the  distribution $\widehat{\mathbf{P}}$ is invariant under $\xi$, parallel and hence involutive, the right-hand side
is a section of $\mathbf{P}$ for all sections $V_1, V_2$ of $\widehat{\mathbf{P}}$. This proves the involutivity of
$\mathbf{P}$. The distribution $  \mathbf{P}$ descends to $M$ due to the invariance under $\xi$.\eprf
Moreover we obtain:
\blem\label{VZprop}
There exist local sections $V$ of $\mathbf{L}$ and $Z$ of $\mathbf{P}$, defined on a 
conical open set, 
such that $V$ and 
 \be
 \zeta&=&\xi + Z
 \ee
locally span $\widehat{\mathbf{P}}$ and satisfy 
 \be
 [\xi, V]=0&\text{and}& [\xi,Z]=0.
 \ee
The vector fields $V$ and $Z$ descend to local vector fields on  $\M$.
 \elem
 \bprf
 We have already seen that there exists a non-vanishing section $V$ of $\mathbf{L}$  
 over a 
conical open set such that $[\xi,V] =0$. In the following we always work locally over conical
open sets. 
Every section of $\widehat{\mathbf{P}}$ that is nowhere a multiple of $V$ is of the form $f\xi+Z$ for $Z$ a (possibly vanishing) local section of $\mathbf{P}$ and $f$ a non-vanishing local function on $\hm$. Hence, by multiplying with $1/f$ we can assume that we have a section
 \[\hat\zeta=\xi+ \hat Z\]
 of $\widehat{\mathbf{P}}$. 
We will now use the freedom to add multiples of $V$ to $\hat Z$ without leaving $\widehat{\mathbf{P}}$, in order to find a $Z=\hat Z+\vf V$ for which we have $[\xi,Z]=0$. Indeed, writing 
\[\nabla_\xi\hat\zeta= f V + h \hat\zeta
\]
with functions $f$ and $h$, we compute
\[
[\xi,\hat Z]
= 
[\xi,\hat \zeta]
=
f V+ (h-1)\hat\zeta .
\]
Since $[\xi,\hat Z]$ belongs to $\xi^\perp$, we must have that $h\equiv 1$ and
\[
[\xi,\hat Z]=f V.\]
Now if we  fix a solution $\vf$ of
\[\d\vf(\xi)+f=0,\]
and set $Z=\hat Z+\vf V$ we get
\[[\xi,Z]=0.\]
Clearly, since $V$ is a section of $\widehat{\mathbf{P}}$, the vector field 
\[\zeta:=\xi +Z =\hat\zeta+\vf V,\]
is also a section in $\widehat{\mathbf{P}}$ that is still linearly independent of $V$ and therefore $Z$ is a section of $\mathbf{P}$ that locally descends to $\M$. \eprf

\btheo
Let $(\hm,\hg)$ be a timelike cone over a semi-Riemannian manifold $(\M,\gg)$. 
If the  cone admits  a parallel distribution of  totally  null $2$-planes field, then 
the base $(\M,\gg)$  admits locally two vector fields $V$ and $Z$ such that
\begin{equation}\label{gVZ}
\gg(V,V)=0,\ \ \gg(Z,Z)=1,\ \ \gg(V,Z)=0,\end{equation}
and 
 \begin{eqnarray}
  \label{nabV}
  \nabla_XV
  &
  =
  &\alpha (X)V + \gg(X,V)Z, 
  \\
  \label{nabZ}
    \nabla_XZ
  &=&
  -X +\beta(X)V+\gg(X,Z)Z,
  \end{eqnarray}
   for all $X\in T\M$, with $1$-forms $\alpha $ and $\beta$ on $\M$.
   
 Conversely, each pair of vector fields $V$ and $Z$ on $M$ satisfying relations (\ref{gVZ}), (\ref{nabV}) and (\ref{nabZ}) defines a parallel distribution of totally null $2$-planes on the cone.
 \etheo
  \bprf
First assume that the cone admits a parallel totally null $2$-plane $\widehat{\mathbf{P}}$ which is spanned by 
$V$ and $\zeta=\xi+Z$ as in \Cref{VZprop}. Equations \re{gVZ} are implied by $\widehat{\mathbf{P}}$ being totally null. Moreover, 
  equations \re{pplane} with $\chi =V$ and $X\in T\M$  become
  \begin{eqnarray} \label{nabV1}
  \hnab_XV\; =\; \nabla_XV+\gg(X,V)\xi
  &=&\alpha (X)V + \mu(X)(\xi+Z),
  \\  \label{nabZ1}
   \hnab_X\zeta\;=\;X+\nabla_XZ+\gg(X,Z)\xi&=&\beta(X)V+\nu(X)(\xi+Z), 
  \end{eqnarray}
 and imply
 \be
 \mu(X)&=&\gg(X,V),
 \\
  \nu(X)&=&\gg(X,Z),
  \ee
  as well as equations \re{nabV} and \re{nabZ}, but still with $r$-dependent $1$-forms $\alpha$ and $\beta$. Hence, it remains to show that $\alpha$ and $\beta$, when restricted to $\xi^\perp$, are invariant under the flow of $\xi$ and therefore descend to $1$-forms on $\M$, i.e., that
 \[
 \cal L_\xi\alpha|_{\xi^\perp}=\cal L_\xi\beta|_{\xi^\perp}=0.\]
 But  from
 \be
 0&=&
 \hR(\xi,X)V
 \\
 &=&(\mathcal{L}_\xi\alpha )(X) V+ \alpha(X) V+ \gg(X,V)(\xi+Z)
 -
 \left(\nabla_XV+\gg(X,V)\xi\right)\\
 &=& (\mathcal{L}_\xi\alpha )(X) V,
 \ee
 because of equation 
 \re{nabV1}. This proves that $\mathcal{L}_\xi\alpha|_{\xi^\perp}=0$. 
 Analogously we get
  \[ 
 0 =
 \hR(\xi,X)\zeta =(\mathcal{L}_\xi\beta )(X) V \]
 and again $\mathcal{L}_\xi\beta|_{\xi^\perp}=0$. 
  
 Conversely, if we start with a manifold $(\M,\gg)$ and vector fields satisfying conditions \re{gVZ}, \re{nabV} and \re{nabZ}, a straightforward computations shows that the cone admits  a parallel null plane spanned by $V$ and $\xi+Z$.
 \eprf
 \bfolg
 If the cone \re{conemetric} admits  a distribution of parallel totally null $2$-planes, then the base $(\M,\gg)$ admits locally a geodesic, shearfree null vector field $V$.
 \efolg
  \bprf
Since $V$ is null, equation \re{nabV} implies that $V$ is geodesic.
Recall that a geodesic  null vector field is called {\em shearfree} if 
\[\cal L_V\gg =\lambda\gg +\theta\cdot V^\flat,
\]with a function $\lambda$ and a $1$-form $\theta$ and where the dot stands for the symmetric product.
From \re{nabV} and the formula 
\begin{equation} \label{LXg:eq} \mathcal{L}_Xg= 2 (\nabla X^\flat)^{\mathrm{sym}},\end{equation} 
where `sym' denotes the projection onto
the symmetric part, we compute
\[
\cal L_V\gg =2(\alpha+Z^\flat)\cdot V^\flat,
\]
i.e., the shear free condition is satisfied with $\lambda=0$.	
  \eprf
\bbem  \label{trafo:rem} We can change the basis of $\mathrm{span}(V,Z)$ to $V', Z'$ such that $V'$ is still null and orthogonal to $Z'$ and such that $Z'$ is a unit vector field,
\[(V,Z)\longmapsto (V'=\e^{f}V, Z'=Z +h V).\] 
Then the $1$-forms $\alpha$ and $\beta $ transform as
\begin{eqnarray*}
\alpha&\longmapsto& \alpha'= \alpha+ \d f -hV^\flat,
\\
\beta&\longmapsto& \beta'= e^{-f}(\beta + h \alpha+ \d h -hZ^\flat -h^2V^\flat ).
\end{eqnarray*}
\ebem

\subsection{Consequences of the fundamental equations}
Let $(M,g)$ be a semi-Riemannian manifold endowed with two pointwise linearly independent vector fields 
$V$, $Z$ which satisfy \re{gVZ}, \re{nabV} and \re{nabZ}. 
\bs \label{fundeq:prop}The fundamental equations \re{gVZ}\, \re{nabV} and \re{nabZ} imply
\begin{eqnarray} d V^\flat &=&(\alpha -Z^\flat )\wedge V^\flat,\label{dVb:equ}\\
d Z^\flat &=& \beta \wedge V^\flat ,\label{dZb:equ}\\
{[}Z,V{]} &=& (\alpha (Z) -\beta (V) +1) V \label{ZV:equ},\\
 \cal L_Vg &=& {2(\alpha + Z^\flat )V^\flat},\label{LVg:equ}\\
 \cal L_Zg &=& {-2g + 2(Z^\flat )^2 + 2\beta V^\flat}\label{LZg:equ},   
\end{eqnarray}
where  we are using the symmetric product of $1$-forms in the last two formulas. 
\es
\bprf
Since $\nabla$ is torsion-free, the differential of any 1-form $\varphi$ is 
given by 
\[ \d\varphi (X,Y) = (\nabla_X\varphi ) Y-(\nabla_Y\varphi )X,\quad X,Y\in \mathfrak{X}(M).\]
Now \re{dVb:equ} and \re{dZb:equ} follow immediately from \re{nabV} and \re{nabZ}. 
Using again that $\nabla$ is torsion-free, the fundamental equations  easily imply \re{ZV:equ}.  
Similarly, the last two formulas follow from \re{LXg:eq}. 
\eprf 

\bfolg \label{cor78}We have
\begin{eqnarray} 
 \mathcal{L}_VV^\flat &=& \alpha (V)V^\flat ,\\
 \mathcal{L}_ZV^\flat &=& 
{
 \left(  \alpha(Z)-1\right) V^\flat},
 \\
  \mathcal{L}_VZ^\flat &=&
  {
  \beta(V) V^\flat}.
\end{eqnarray} 
The vector fields $Z$ and $V$ commute if and only if 
\begin{equation}
{\beta (V)}= \alpha (Z) +1.
  \label{ab:eq}
  \end{equation}
\efolg
\bprf The first three formulas are obtained from Cartan's formula for the Lie derivative to the equations (\ref{dVb:equ}) and (\ref{dZb:equ}).   Alternatively one can use (\ref{ZV:equ}), (\ref{LVg:equ}) and (\ref{LZg:equ}). 
The last assertion follows from equation (\ref{ZV:equ}).
\eprf 

\bfolg  \label{cor79}
By multiplying $V$ with a function
we can locally assume that 
\begin{equation} \d V^\flat =0,\label{grad:equ}\end{equation} 
that is 
\be \alpha = Z^\flat + f_\alpha V^\flat\ee 
for some function $f_\alpha$. 
The latter equation implies 
\be \alpha (Z)=1,\quad \alpha (V)=0, \quad  \mathcal{L}_VV^\flat =0, \quad \mathcal{L}_ZV^\flat =0 .\ee 
By adding a functional multiple of $V$ to $Z$ we can further locally assume that 
\be \beta (V)=2,\ee 
which implies $\mathcal{L}_VZ^\flat = 2V^\flat
$ and 
is equivalent to 
$ [Z,V]=0$.
\efolg 

\bprf By equation (\ref{dVb:equ}) and the Frobenius theorem,  the hyperplane distribution $V^\perp$ is integrable, which locally  implies
that a functional multiple of $V^\b$ is closed. The equations and the second statement follow from the transformation formulae for $\alpha$ and $\beta$  in Remark \ref{trafo:rem} and Corollary \ref{cor78}.
\eprf

\bfolg\label{refcor}
With the normalisation 
that $dV^\flat=0$,
 the leaves of the  integrable distribution $V^\perp$ are totally geodesic and the vector field $V$ preserves the tensor field $g|_{V^\perp\times V^\perp}$.
\efolg
\bprf 
For $X,Y\in V^\perp$ we have \[g(\nabla_XY,V) =-g(Y,\nabla_X V),\]
and because of $dV^\flat=0$,
\[
g(Y,\nabla_X V)=\frac12 (\cal{L}_Vg)(X,Y).\]
Using equation (\ref{LVg:equ}) for $X,Y\in V^\perp$ we get  $\cal L_Vg)(X,Y)=0$ and hence $g(\nabla_XY,V)=0$, which means that the leaves of $V^\perp$ are totally geodesic.
\eprf

\subsection{The local form of the metric on the base}\label{plane-local-form-sec}
In the following we will assume all of the above equations. By \re{grad:equ}, locally, there exists a function $u$ such that
$du=V^\flat$. The function $u$ is constant on each leaf $L$ of the distribution $V^\perp$. Locally, we can decompose
$M$ as $M=L\times \rr$, such that $u$ corresponds to the coordinate on the $\rr$-factor and the leafs of $V^\perp$ are
the hypersurfaces $L_u=L\times \{ u\}$. Since the vector fields $V$ and $Z$ commute and are tangent to $V^\perp$, we can further
decompose each leaf of $V^\perp$ locally as $L_u\cong L=M_0\times \rr \times \rr$, such that $V=\partial_t$, $Z=\partial_s$ are the coordinate vector fields
tangent to the first and second $\rr$-factor, respectively.  

Let us denote by $\mathbf{P}$ the integrable distribution spanned by $V$ and $Z$. Notice that  by  \re{dZb:equ} the distribution $\mathbf{P}^\perp=Z^\perp \cap V^\perp$ is also integrable, in virtue of the Frobenius theorem. So we can assume that the level sets of $s$ are tangent to $\mathbf P^\perp$. Finally,  
the decomposition $M=L\times \rr$ can be chosen such that the decomposition 
$L_u = M_0 \times \rr \times \rr$ is independent of $u$, that is the vector field $\partial_u$ commutes with $V$, $Z$ and with the 
canonical lift of vector fields of $M_0$.  

\btheo Let $(M,g)$ be a semi-Riemannian manifold  such that the cone $(\widehat{M} , \widehat{g})$ admits
a parallel totally null distribution of $2$-planes. In terms of the above local decomposition $M = M_0 \times \rr^3$ we have
\begin{equation} g = \d s^2 + e^{-2s}g_0(u) + 2\,\d u\,\eta,\label{metric:equ}\end{equation}
for some 1-form $\eta$ on $M$ such that $\eta (\partial_t)$ is nowhere vanishing and a family of metrics $g_0(u)$ on $M_0$ depending
on $u$. 
\etheo 

\bprf The restriction of the metric
to a leaf $N=M_0\times \rr \times \{ (s,u) \}$ of $\mathbf P^\perp$ is degenerate with kernel $V=\partial_t\in \mathbf{P}^\perp$ and invariant under the flow
of $V$, see \re{LVg:equ}. Since $M_0$ is transversal to $V$, we see that 
$g|_N=g_0(u,s)$ for some family of metrics on $M_0$  depending on $u$ and $s$. The flow of $Z=\partial_s$ is a 1-parameter family of homotheties of weight $-2$, 
see \re{LZg:equ}. This shows that $g_0(u,s)=e^{-2s}g_0(u)$ for some 1-parameter family of metrics $g_0(u)$. It follows that on the leafs $L_u = M_0 \times \rr \times \rr \times \{ u\}$ of $V^\perp$ the metric is of the form $\d s^2 + e^{-2s}g_0(u)$. Finally, on $M$ we obtain the general form \re{metric:equ} with $\eta (\partial_t)\neq 0$, in view of the 
non-degeneracy of $g$. 
\eprf 
It remains to determine the necessary and sufficient conditions for the data $g_0(u)$ and $\eta$ ensuring that the 
cone over $(M,g)$ as in (\ref{metric:equ}) admits a parallel totally null distribution of $2$-planes. Let $M_0$ be a manifold and let us denote the standard coordinates on $\rr^3$ by $(t,s,u)$.  
\btheo \label{plane-local-form}
For any 1-form $\eta$ on $M:= M_0\times \rr^3$ such that $\eta_t:=\eta (\partial_t)\neq 0$ and any family of semi-Riemannian 
metrics $g_0(u)$ on $M_0$ the tensor field 
\[ g= \d s^2 + e^{-2s}g_0(u) + 2\,du\,\eta,\] 
cf.\ \re{metric:equ},  is a semi-Riemannian metric on $M$ such that the vector fields $V=\partial_t$ and 
$Z=\partial_s$ satisfy \re{gVZ}. The covariant derivatives of $V$ and $Z$ are given by  \re{nabV} and \re{nabZ} 
for some 1-forms $\alpha =Z^\flat + f_\alpha V^\flat$ and $\beta$ 
such that  $f_\alpha$ is a function on $M$ and $\beta (V)=2$, 
if and only if 
the coefficients of $\eta$ 
solve the following system of first order partial differential equations: 
 \begin{equation} \partial_t\eta_t=\partial_s\eta_t=X\eta_t=\partial_t\eta(X)=0,\quad \partial_t\eta_s = 2\eta_t,\quad\partial_s\eta (X) -X\eta_s=-2\eta (X) \label{system:equ}\end{equation}
 for all $X\in \mathfrak{X}(M_0)$. Then  $\alpha$ and $\beta$ are determined by 
\begin{eqnarray*} &&f_\alpha = \frac{1}{\eta_t^2} \partial_t\eta_u - \frac{2}{\eta_t}\eta_s,\quad \beta (Z) = \frac{1}{\eta_t}\partial_s\eta_s,\quad 
 \beta (X) = \frac{1}{2\eta_t}(X\eta_s +\partial_s\eta (X) +2 \eta (X)),\\
 && \beta (\partial_u ) = \frac{1}{\eta_t}(\partial_s\eta_u -\eta_s^2+2\eta_u).
\end{eqnarray*} 
\etheo 

\bprf We denote by $X$ the canonical lift of a vector field on $M_0$. Then $X, V, Z$ and $\partial_u$ commute and using the Koszul formula we obtain
\begin{eqnarray*} 
&&g(\nabla_VV,X)=g(\nabla_VV,V)=g(\nabla_VV,Z)=0,\quad g(\nabla_VV,\partial_u ) = \partial_t\eta_t, \\
&&g(\nabla_ZV,X)=g(\nabla_ZV,V)=g(\nabla_ZV,Z)=0,\quad 2g(\nabla_ZV,\partial_u ) = \partial_s\eta_t+\partial_t\eta_s,\\
&&g(\nabla_XV,X)= g(\nabla_XV,V)=g(\nabla_XV,Z)=0,\quad 2g(\nabla_XV,\partial_u ) = X\eta_t + \partial_t\eta (X),\\
&&2g(\nabla_{\partial_u}V,X)= \partial_t\eta(X)-X\eta_t,\quad  g(\nabla_{\partial_u}V,V)=0,\quad  2g(\nabla_{\partial_u}V,Z)=\partial_t\eta_s-\partial_s\eta_t,\\ 
&&g(\nabla_{\partial_u}V,\partial_u ) = \partial_t\eta_u,\\
&& g(\nabla_VZ,X) = g(\nabla_VZ,V)=g(\nabla_VZ,Z)=0,\quad 2g(\nabla_VZ,\partial_u) = \partial_t\eta_s+\partial_s\eta_t,\\
&& g(\nabla_ZZ,X)=g(\nabla_ZZ,V)=g(\nabla_ZZ,Z)=0,\quad g(\nabla_ZZ,\partial_u)= \partial_s\eta_s,\\
&& g(\nabla_XZ,X)=-g(X,X),\quad g(\nabla_XZ,V)=g(\nabla_XZ,Z)=0,\\ 
&&2g(\nabla_XZ,\partial_u) 
= X\eta_s +\partial_s\eta (X),\\
&& 2g(\nabla_{\partial_u}Z,X)=\partial_s\eta (X) -X\eta_s,\quad 2g(\nabla_{\partial_u}Z,V)=\partial_s\eta_t-\partial_t\eta_s,\quad g(\nabla_{\partial_u}Z,Z)=0,\\ 
&&g(\nabla_{\partial_u}Z,\partial_u) = \partial_s\eta_u. 
\end{eqnarray*}
Comparing with \re{nabV}, \re{nabZ} we obtain the above formulas for $\alpha$ and $\beta$ and the following system for $\eta$:
\begin{eqnarray*}&& \partial_t\eta_t = 0,\quad \partial_s\eta_t+\partial_t\eta_s = 2 \eta_t,\quad X\eta_t + \partial_t\eta (X) = 0,\quad \partial_t\eta(X)-X\eta_t=0,\\
&&\partial_t\eta_s-\partial_s\eta_t=2\eta_t,\\ 
&&\partial_s\eta (X) -X\eta_s=-2\eta (X) 
\end{eqnarray*} 
for all $X\in \mathfrak{X}(M_0)$. 
This system can be brought to the form \re{system:equ}.
\eprf
For convenience we denote a system of local
coordinates on $M_0$ by $(x^i)_{i=1,\ldots ,n_0}$ and denote by $x$ the corresponding coordinate vector, where $n_0=\dim M_0$. The general solution of \re{system:equ} is obtained as follows.
\bs  Let $f_1=f_1(u)$ be an arbitrary nowhere vanishing smooth function on 
the real line equipped with the coordinate $u$ and $f_2= f_2(x,s,u)$ an arbitrary smooth function on $M$ which does not 
depend on $t$. Let $h_i=h_i(x,s,u)$ be a ($t$-independent) solution of the ordinary differential equation 
\[ \partial_sh_i +2h_i= \partial_if_2\]
for all $i=1,\ldots ,n_0$, where $\partial_i = \partial /\partial x^i$. Then 
\[ \eta_t := f_1(u),\quad \eta_s :=2tf_1(u)+f_2(x,s,u),\quad \eta (\partial_i) := h_i(x,s,u)\]
solves  \re{system:equ} and every solution is of this form. 
\es

\bbem
Finally we return to the Lorentzian metrics that occurred in \Cref{holtheointro} and arose from the case where the cone $(\hm,\hg)$ admits a parallel null line: in this case the cone metric $\hg$ was isometric to the metric $\tg=2 \d u\d v+u^2 g_0$ with a Lorentzian metric $g_0$ and $g$ was isometric to $g=\d s^2+\mathrm{e}^{2s} g_0$. Then \Cref{holtheointro} stated that if 
 the holonomy of the cone is not equal to $\hol(g_0)\ltimes \rr^{1,n-1}$, then
 $g_0$ admits a parallel null vector field. It is well known (see for example \cite{schimming74,galaev-leistner09}) that   locally $g_0$ is of the form
 $g_0=2\d x\d z+h(z)$, where $h(z)$ is a $z$-dependent family of Riemannian metrics. Hence, $g$ is of the form
\[
g=ds^2+\mathrm{e}^{2s} h(z)+2\mathrm{e}^{2s}\d x\d z.\]
This corresponds to the local form in \Cref{plane-local-form}, where $x$ corresponds to $t$ and $2\mathrm{e}^{2s}\d x$ to $\eta$, $z$ to $u$ and $h(z)$ to $g_0(u) $. 

 
\ebem

\bibliographystyle{abbrv}

\begin{thebibliography}{10}

\bibitem{acgl07}
D.~Alekseevsky, V.~Cort\'es, A.~Galaev, and T.~Leistner.
\newblock {Cones over pseudo-{R}iemannian manifolds and their holonomy.}
\newblock {\em J. Reine Angew. Math.}, 635:23--69, 2009.

\bibitem{AlekseevskyCortes05}
D.~V. Alekseevsky and V.~Cort\'{e}s.
\newblock Classification of pseudo-{R}iemannian symmetric spaces of
  quaternionic {K}\"{a}hler type.
\newblock In {\em Lie groups and invariant theory}, volume 213 of {\em Amer.
  Math. Soc. Transl. Ser. 2}, pages 33--62. Amer. Math. Soc., Providence, RI,
  2005.

\bibitem{Armstrong08-2}
S.~Armstrong.
\newblock Projective holonomy. {II}. {C}ones and complete classifications.
\newblock {\em Ann. Global Anal. Geom.}, 33(2):137--160, 2008.

\bibitem{baer93}
C.~B{{\"a}}r.
\newblock Real {K}illing spinors and holonomy.
\newblock {\em Commun. Math. Phys.}, 154(3):509--521, 1993.

\bibitem{baum89-3}
H.~Baum.
\newblock Complete {R}iemannian manifolds with imaginary {K}illing spinors.
\newblock {\em Ann. Global Anal. Geom.}, 7(3):205--226, 1989.

\bibitem{baum89-2}
H.~Baum.
\newblock Odd-dimensional {R}iemannian manifolds with imaginary {K}illing
  spinors.
\newblock {\em Ann. Global Anal. Geom.}, 7(2):141--153, 1989.

\bibitem{bb-ike93}
L.~B{\'e}rard-Bergery and A.~Ikemakhen.
\newblock On the holonomy of {L}orentzian manifolds.
\newblock In {\em Differential Geometry: Geometry in Mathematical Physics and
  Related Topics (Los Angeles, CA, 1990)}, volume~54 of {\em Proc. Sympos. Pure
  Math.}, pages 27--40. Amer. Math. Soc., Providence, RI, 1993.

\bibitem{berger55}
M.~Berger.
\newblock Sur les groupes d'holonomie homog\`ene des vari\'et\'es \`a connexion
  affine et des vari\'et\'es riemanniennes.
\newblock {\em Bull. Soc. Math. France}, 83:279--330, 1955.

\bibitem{clss-09}
V.~Cort\'{e}s, T.~Leistner, L.~Sch{\"a}fer, and F.~Schulte-Hengesbach.
\newblock Half-flat structures and special holonomy.
\newblock {\em Proc. Lond. Math. Soc.}, 102(3):113--158, 2010.

\bibitem{derham52}
G.~de~Rham.
\newblock Sur la reductibilit\'e d'un espace de {R}iemann.
\newblock {\em Comment. Math. Helv.}, 26:328--344, 1952.

\bibitem{olmos-discala01}
A.~J. Di~Scala and C.~Olmos.
\newblock The geometry of homogeneous submanifolds of hyperbolic space.
\newblock {\em Math. Z.}, 237(1):199--209, 2001.

\bibitem{FedorovaMatveev14}
A.~Fedorova and V.~S. Matveev.
\newblock Degree of mobility for metrics of {L}orentzian signature and parallel
  {$(0,2)$}-tensor fields on cone manifolds.
\newblock {\em Proc. Lond. Math. Soc. (3)}, 108(5):1277--1312, 2014.

\bibitem{fefferman/graham85}
C.~Fefferman and C.~R. Graham.
\newblock Conformal invariants.
\newblock {\em Ast\'erisque}, (Numero Hors Serie):95--116, 1985.
\newblock The mathematical heritage of \'Elie Cartan (Lyon, 1984).

\bibitem{fefferman-graham07}
C.~Fefferman and C.~R. Graham.
\newblock {\em The ambient metric}, volume 178 of {\em Annals of Mathematics
  Studies}.
\newblock Princeton University Press, Princeton, NJ, 2012.

\bibitem{galaev06thesis}
A.~S. {Galaev}.
\newblock {\em {Holonomy groups and special geometric structures of
  pseudo-K{\"a}hlerian manifolds of index 2}}.
\newblock PhD thesis, Humboldt University Berlin, Dec. 2006.
\newblock arXiv:math/0612392.

\bibitem{galaev-leistner09}
A.~S. Galaev and T.~Leistner.
\newblock On the local structure of {L}orentzian {E}instein manifolds with
  parallel distribution of null lines.
\newblock {\em Classical Quantum Gravity}, 27(22):5003, 2010.

\bibitem{gallot79}
S.~Gallot.
\newblock \'{E}quations diff\'erentielles caract\'eristiques de la sph\`ere.
\newblock {\em Ann. Sci. \'Ecole Norm. Sup. (4)}, 12(2):235--267, 1979.

\bibitem{GibbonsRychenkova98}
G.~W. Gibbons and P.~Rychenkova.
\newblock Cones, tri-{S}asakian structures and superconformal invariance.
\newblock {\em Phys. Lett. B}, 443(1-4):138--142, 1998.

\bibitem{HochschildSerre53}
G.~Hochschild and J.-P. Serre.
\newblock Cohomology of {L}ie algebras.
\newblock {\em Ann. of Math. (2)}, 57:591--603, 1953.

\bibitem{kathhabil}
I.~Kath.
\newblock {\em Killing Spinors on Pseudo-Riemannian Manifolds}.
\newblock 1999.
\newblock Habilitationsschrift, Humboldt-Universit{\"a}t Berlin.

\bibitem{leistner05a}
T.~Leistner.
\newblock Conformal holonomy of {C}-spaces, {R}icci-flat, and {L}orentzian
  manifolds.
\newblock {\em Differential Geom. Appl.}, 24(5):458--478, 2006.

\bibitem{matveev10}
V.~S. Matveev.
\newblock Gallot-{T}anno theorem for pseudo-{R}iemannian metrics and a proof
  that decomposable cones over closed complete pseudo-{R}iemannian manifolds do
  not exist.
\newblock {\em Differential Geom. Appl.}, 28(2):236--240, 2010.

\bibitem{matveev-mounoud10}
V.~S. Matveev and P.~Mounoud.
\newblock Gallot-{T}anno theorem for closed incomplete pseudo-{R}iemannian
  manifolds and applications.
\newblock {\em Ann. Global Anal. Geom.}, 38(3):259--271, 2010.

\bibitem{Mounoud12}
P.~Mounoud.
\newblock Parallel and symmetric 2-tensor fields on pseudo-{R}iemannian cones.
\newblock {\em Comm. Anal. Geom.}, 20(1):203--233, 2012.

\bibitem{oneill83}
B.~O'Neill.
\newblock {\em Semi-Riemannian Geometry}.
\newblock Academic Press, 1983.

\bibitem{SchaferSchulte-Hengesb10}
L.~Sch\"{a}fer and F.~Schulte-Hengesbach.
\newblock Nearly pseudo-{K}\"{a}hler and nearly para-{K}\"{a}hler
  six-manifolds.
\newblock In {\em Handbook of pseudo-{R}iemannian geometry and supersymmetry},
  volume~16 of {\em IRMA Lect. Math. Theor. Phys.}, pages 425--453. Eur. Math.
  Soc., Z\"{u}rich, 2010.

\bibitem{schimming74}
R.~Schimming.
\newblock {R}iemannsche {R}{\"a}ume mit ebenfrontiger und mit ebener
  {S}ymmetrie.
\newblock {\em Mathematische Nachrichten}, 59:128--162, 1974.

\bibitem{Solleveld02}
M.~Solleveld.
\newblock Lie algebra cohomology and {M}acdonalds conjectures.
\newblock Master's thesis, University of Amsterdam, Korteweg-De Vries Institute
  for Mathematics, 2002.
\newblock https://www.math.ru.nl/~solleveld/scrip.pdf.

\bibitem{Thomas25}
T.~Y. Thomas.
\newblock Announcement of a projective theory of a affinely connected
  manifolds.
\newblock {\em Proc. Nat. Acad. Sci.}, 11:588--589, 1925.

\bibitem{wang89}
M.~Y. Wang.
\newblock Parallel spinors and parallel forms.
\newblock {\em Ann. Global Anal. Geom.}, 7(1):59--68, 1989.

\bibitem{wu64}
H.~Wu.
\newblock On the de {R}ham decomposition theorem.
\newblock {\em Illinois J. Math.}, 8:291--311, 1964.

\end{thebibliography}

\providecommand{\MR}[1]{}\def\cprime{$'$} \def\cprime{$'$} \def\cprime{$'$}

\end{document}